\documentclass[11pt,a4paper,reqno]{amsart}
\usepackage[T1]{fontenc}
\usepackage{lmodern}
\usepackage[a4paper,margin=1in]{geometry}
\usepackage{amsmath,amssymb,amsthm,mathtools}
\usepackage{mathrsfs,enumitem,microtype}
\usepackage{tikz-cd}
\usepackage{cite}
\usepackage{xcolor}
\definecolor{referenceblue}{rgb}{0,0,0.55}
\definecolor{referencegreen}{rgb}{0,0.4,0}
\definecolor{referenceurl}{rgb}{0,0,0.65}
\usepackage[colorlinks=true,linktoc=section,
  linkcolor=referenceblue,citecolor=referencegreen,
  urlcolor=referenceurl]{hyperref}
\allowdisplaybreaks[1]
\newtheorem{theorem}{Theorem}[section]
\newtheorem{proposition}[theorem]{Proposition}
\newtheorem{lemma}[theorem]{Lemma}

\theoremstyle{definition}

\newcommand{\Z}{\mathbb Z}

\newcommand{\R}{\mathbb R}
\newcommand{\CC}{\mathbb C}
\newcommand{\PP}{\mathbb P}
\newcommand{\cO}{\mathcal O}
\DeclareMathOperator{\MW}{MW}
\DeclareMathOperator{\im}{im}
\DeclareMathOperator{\coker}{coker}
\DeclareMathOperator{\rk}{rank}
\DeclareMathOperator{\specialize}{sp}
\DeclareMathOperator{\Pic}{Pic}

\title[A complex rational homology six-sphere]
{A complex structure on a rational homology six-sphere with two-torsion}
\author{Zichang Wang}
\date{}
\address{
Z.C. ~ Wang: Tsinghua University, Beijing, China
}
\email{zichang-21@mails.tsinghua.edu.cn}
\begin{document}
\begin{abstract}
We construct a compact, smooth, simply connected complex threefold.
Its second and third integral homology groups are cyclic of order two,
and its remaining homology groups agree with those of the six-sphere.
Starting from a rational elliptic surface with singular fibers
\([IV^*,I_2,I_1,I_1]\), we form a family of complex two-tori and
complete it using the Mumford construction and a logarithmic
transformation of multiplicity three. We compute the fundamental group
and integral homology of the resulting threefold.
\end{abstract}
\maketitle
\tableofcontents

\section{Introduction}\label{intro:section}

Which smooth manifolds admit complex structures? A central example is
the classical Hopf problem, which asks whether the six-sphere admits
an integrable complex structure.
Borel and Serre showed that the only positive-dimensional spheres
admitting almost complex structures are $S^2$ and $S^6$~\cite{BorelSerre}.
Albanese and Milivojevi\'c extended this restriction to rational
homology spheres: an almost complex rational homology sphere has real
dimension two or six~\cite[Theorem~2.2]{AlbaneseMilivojevic}.
Thus six is the only possible dimension beyond the Riemann sphere
for a compact complex manifold with the rational homology of a sphere.

The manuscript \emph{A compact complex threefold fibred by tori over
the projective line, and the six-sphere}, attributed to Claude and
Levent Alp\"oge~\cite{AlpogeClaude}, presents a construction of a
complex structure on $S^6$. It starts with a family of complex
two-tori over a three-punctured projective line, uses the Mumford
construction at the point with unipotent monodromy, and applies
logarithmic transformations of multiplicities three and four at
the other two points. Engel's short note~\cite{Engel} explains this
construction using a rational elliptic surface and a quotient of a
principal $\CC^*$-bundle by a lifted Mordell--Weil translation.

We use a similar construction, starting from a rational elliptic
surface with singular fibers $[IV^*,I_2,I_1,I_1]$. If $O$ is the
zero section and $P$ generates the Mordell--Weil group, we set
$M=\cO_S(P-O)$. Over the smooth locus of the elliptic fibration,
we quotient $M^\times$ by a sufficiently small scalar multiple of
a lift of translation by $-6P$. We complete the
resulting torus family using the Mumford construction at the three
multiplicative fibers and a logarithmic transformation of multiplicity
three at the $IV^*$ fiber. Our proof does not rely on the conclusions of~\cite{AlpogeClaude}. The result is the following.

\begin{theorem}\label{intro:main}
For every sufficiently small nonzero $\lambda$, the construction gives
a compact, smooth, simply connected complex threefold $X_2(\lambda)$
with integral homology
\[
 H_k(X_2(\lambda);\Z)\simeq
 \begin{cases}
 \Z,&k=0,6,\\
 \Z/2,&k=2,3,\\
 0,&k=1,4,5.
 \end{cases}
\]
In particular, $X_2(\lambda)$ is a complex rational homology six-sphere.
\end{theorem}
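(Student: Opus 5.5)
The plan has four stages: build the threefold, compute its fundamental group from the fibration over $\PP^1$, compute its homology from the Leray spectral sequence of that fibration, and finish with Poincaré duality and the universal coefficient theorem.

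\emph{Construction.} Let $\pi\colon S\to\PP^1$ be the rational elliptic surface with singular fibers $[IV^*,I_2,I_1,I_1]$. Let $U\subset\PP^1$ be the complement of the four critical values, and $S_U=\pi^{-1}(U)$. The Mordell--Weil group is generated by $P$; from the fiber configuration it should be $\Z/3$-free of rank one with height pairing $\langle P,P\rangle=1/6$, which is why $-6P$ appears. Choose a lift $\phi$ of translation by $-6P$ to $M=\cO_S(P-O)$ over $S_U$. Then $\lambda\phi$ acts on $M^\times|_{S_U}$ properly discontinuously and freely for small $\lambda\ne 0$; the contraction property comes from the height pairing being positive. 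The quotient $Y_U$ is a family of complex $2$-tori $T_t=E_t^\times$-bundle$/\Z$.

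Next, compute monodromy on $H_1(T_t;\Z)\cong\Z^4$. The lattice is generated by the fiber circle of $M^\times$, the two cycles $a,b$ of $E_t$, and the quotient direction $c$.
\begin{itemize}
\item At each $I_n$ fiber the monodromy is unipotent, and it is twisted by how $P$ meets the fiber components. The Mumford (toroidal) degeneration gives a smooth Kähler-type completion there, with the fiber over the point being a union of toric surfaces.
\item At $IV^*$ the $E_t$-monodromy has order $3$. Since $P$ meets the identity component, the torus monodromy has finite order $3$ and acts freely after a base change. A logarithmic transformation of multiplicity three, namely a $\Z/3$-quotient of $\Delta\times T$ with a free action, fills in a multiple fiber.
\end{itemize}
Gluing these local models gives a compact complex manifold $X_2(\lambda)$ with a holomorphic map $f\colon X\to\PP^1$.

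\emph{Fundamental group.} Apply van Kampen to $f$. We have $\pi_1(Y_U)$ as an extension of $\pi_1(U)$ (free on three generators) by $\Z^4$. Each filled-in fiber kills its local loop together with the vanishing classes.
\begin{itemize}
\item A Mumford fiber kills its local loop $\gamma_i$ together with the vanishing cycles, namely the image of $(N-1)$ where $N$ is the local monodromy.
\item The multiplicity-three fiber imposes $\gamma^3=$ (a fiber element). The multiple fiber of the log transform has multiplicity $3$, so the local orbifold loop is of order $3$ up to fiber classes.
\end{itemize}
Using a section or multisection through the $I_1$ Mumford fibers, I would show that $\pi_1(X)$ is generated by the image of $\pi_1(T)$. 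I would then show that the coinvariants $\Z^4/\sum_i \im(N_i-1)$, further reduced by the log transform relation, vanish. This is a finite integer linear algebra computation with explicit monodromy matrices. It also needs an argument that $\pi_1$ of the base orbifold ($\PP^1$ with one cone point of order $3$, after the other punctures are filled) is trivial, which it is.

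\emph{Homology.} Once $\pi_1=0$, we get $H_1=H_5=0$ and $H_6=\Z$, and by duality $H^1=0$. Compute the Euler characteristic additively over the fibers: torus fibers contribute $0$, and the Mumford fibers contribute their Euler numbers. The log transform fiber is a torus quotient by a free action and contributes $0$. Matching $\chi=2$ forces $b_2=b_4=0$ and $b_3=0$ (note $b_3$ is even, and we need $2+2b_2-b_3=\chi$). Then $H_2$ is finite, and by the universal coefficient theorem and Poincaré duality $H_4\cong H^2\cong\operatorname{Hom}(H_2,\Z)=0$ and $H_3\cong H^3\cong\operatorname{Ext}(H_2,\Z)\cong H_2$. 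So everything reduces to computing $H_2(X;\Z)$. I would do this by Mayer--Vietoris over the decomposition of $\PP^1$ into $U$ minus discs plus neighborhoods of the special fibers, or equivalently by the Leray spectral sequence of $f$. Here $H_2$ is built from the coinvariants $H_0(\PP^1,R^2f_*\Z)$, modulo images coming from $H^1$-type terms and fiber components. The torsion $\Z/2$ should emerge from the $I_2$ fiber: the two components of its Mumford fiber give a class whose double is a fiber class that is homologically trivial.

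\emph{Main obstacle.} The hard step is the integral bookkeeping in the $H_2$ computation. Torsion is sensitive to the exact monodromy representation on $\Lambda^2\Z^4$ and to the contributions of the special fibers. I would first compute $\chi$ and the monodromy matrices explicitly, then run the Leray spectral sequence with $\Z$-coefficients. The $E_2^{1,1}$ and $E_2^{0,2}$ terms are group cohomologies of the monodromy action, and these would be computed via a presentation of the base orbifold fundamental group. The delicate point is checking that the differentials and the local contributions of the special fibers leave exactly $\Z/2$.
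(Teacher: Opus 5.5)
Your construction and fundamental-group outline follow the paper: van Kampen over $\PP^1$, the extended section killing $m_2,m_+,m_-$, the vanishing lattices leaving only $\Z[\widehat\psi]$, the sphere relation forcing $m_*=1$, and the multiple-fiber relation $m_*^3=-\ell$. But you never fix the twist of the logarithmic transformation, and everything downstream depends on it. The order-three automorphism from the cubic base change is a group automorphism of the central abelian surface, so it fixes the origin and does not act freely. The paper composes it with translation by $\ell/3$, where $\ell=\widehat\psi-2\widehat u-4\widehat w$ is $\widehat T_*$-invariant and $\psi(\ell)=1$. This $\ell$ is what lets $m_*^3=-\ell$ kill the last generator $[\widehat\psi]$, and it determines the Leray differentials. (Also, $P$ meets a non-identity component of the $IV^*$ fiber; only $6P$ is narrow.)

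The homology step has a genuine gap. With $b_1=b_5=0$ and $b_2=b_4$, the equation $\chi(X)=2$ gives only $b_3=2b_2$. That is equally consistent with $b_2=1$, $b_3=2$, which is what the Leray page yields if $d_2$ vanishes on $\xi$ and $\phi$. Every $E_2^{0,q}$ and $E_2^{2,q}$ has rank one and $E_2^{1,q}=0$, so even the rational answer is decided by the $d_2$'s, and you never compute them.

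The missing ingredient is Lemma~\ref{hom:clutch}: the logarithmic clutching gives $d_2(a)=\frac{\omega}{3}[\iota_\ell a]$. It must be applied on the invariant lattices $3\Z\psi,\Z\xi,\Z\phi,3\Z\nu$; the factors $3$ are the specialization indices at the multiple fiber (Lemma~\ref{hom:finite-specialization}). The targets are the rank-one coinvariants, in which $[u\wedge w]=6[\psi\wedge\delta]$. You also need integral vanishing of $H^1(\PP^1,j_*\bigwedge^qV)$ (Lemma~\ref{hom:parabolic}), which uses that specialization is an integral isomorphism at all three Mumford fibers. The result is $d_2$ with coefficients $(1,2,2,1)$.

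Two further corrections. The $E_2$ terms are sheaf cohomology of $R^qf_*\Z$, not group cohomology of the base orbifold group, which is trivial here. And your guess about the torsion is wrong. The paper's model at the $I_2$ point has a one-component central fiber, because the exponent vector $(2,1)$ is primitive, and it contributes no torsion. The two copies of $\Z/2$ are the cokernels of $d_2(\xi)=2\omega[\delta]$ and $d_2(\phi)=2\omega[\psi\wedge\delta]$. The factor $2=6/3$ is the level-six polarization $\xi=u\wedge w+6\psi\wedge\delta$ divided by the multiplicity three.
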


The theorem is proved in Section~\ref{hom:section}. Its nonzero
integral torsion distinguishes the underlying manifold from $S^6$.
The essential point is the Mumford construction at the $I_2$
fiber: the exponent vector $(2,1)$ of the multiplicative gluing is
primitive, allowing us to choose a model whose central fiber has
one irreducible component. We prove that specialization
for all three Mumford models maps onto the full integral invariant
lattice. At the remaining fiber, we normalize the identification
after the cubic base change to preserve the distinguished section
before performing the logarithmic transformation. This fixes both
the meridian relation and the translation class in the Leray
spectral sequence. The four transgressions have coefficients
$(1,2,2,1)$, giving the two copies of $\Z/2$ above. The same local
identifications show that the fundamental group is trivial.

Section~\ref{init:initial} constructs the open torus family and
completes it using the Mumford construction and a logarithmic
transformation. Section~\ref{main:section} specifies the global
threefold. Sections~\ref{mon:section}, \ref{pi:section}, and
\ref{hom:section} compute its monodromy, fundamental group, and
integral homology. Section~\ref{top:section} identifies the underlying
oriented smooth manifold as surgery on a conic in $\PP^3$.
Appendix~\ref{app:elliptic} contains the standard elliptic-surface
calculations, and Appendix~\ref{app:topology} proves the topological
classification.

\subsection*{Acknowledgements}
The author thanks Philip Engel for his short note~\cite{Engel},
which was instrumental in understanding the construction
of~\cite{AlpogeClaude}.

\subsection*{Statement on the use of AI}
GPT 5.6 Sol was used for the calculations. GPT 6 Astra was
used to improve the grammar and exposition of this article.

\section{Construction}
\label{init:initial}

We use the notation of Engel~\cite[Section~1]{Engel}: $\pi:S\to\PP^1$
is a rational elliptic surface, $O$ its zero section, and $P$ a
Mordell--Weil generator. The surface used here has an $I_2$ fiber and
two $I_1$ fibers in place of Engel's $III$ and $I_1$ fibers. The
line bundle $M=\cO_S(P-O)$ has degree zero on each smooth fiber;
the line bundle governing its linearization is the pullback of
$\cO_{\PP^1}(1)$. We collect the required facts below and give their
proofs in Appendix~\ref{app:elliptic}.

\begin{proposition}\label{init:surface}
The minimal regular model of
\begin{equation}\label{init:weierstrass}
 y^2=x^3-3(1+t)x+(2+3t+2t^2)
\end{equation}
is a rational elliptic surface $\pi:S\to\PP^1$ with zero section $O$.
Its singular fibers at $\infty,0,\alpha_+,\alpha_-$ have types
$IV^*,I_2,I_1,I_1$, respectively, where
\[
 \{\alpha_+,\alpha_-\}=\{-1+i/2,-1-i/2\}.
\]
\end{proposition}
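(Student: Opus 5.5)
The plan is to run Tate's algorithm directly on the Weierstrass model \eqref{init:weierstrass}, viewed as a minimal Weierstrass fibration over $\PP^1$ with $a_4=-3(1+t)$ of degree $\le 4$ and $a_6=2+3t+2t^2$ of degree $\le 6$. These are sections of $\cO_{\PP^1}(4)$ and $\cO_{\PP^1}(6)$, i.e.\ the line bundle $L=\cO_{\PP^1}(1)$ in the Weierstrass data. Once the discriminant is shown to be nonzero of total degree $12$, the minimal resolution $S$ is a rational elliptic surface. The zero section $O$ is the closure of the point at infinity of the cubic.

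The first step is to compute the discriminant. Up to the constant $-16$ it is $4a_4^3+27a_6^2$, and
\[
 (2+3t+2t^2)^2-4(1+t)^3 = 5t^2+8t^3+4t^4 = t^2\bigl(4(t+1)^2+1\bigr).
\]
This vanishes to order $2$ at $t=0$ and simply at $t+1=\pm i/2$, i.e.\ at $\alpha_\pm$. Its degree as a polynomial is $4$, so as a section of $\cO(12)$ it vanishes to order $12-4=8$ at $\infty$.

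The second step is to read off the fiber types with Tate's algorithm (equivalently, Kodaira's table in terms of $(\operatorname{ord}a_4,\operatorname{ord}a_6,\operatorname{ord}\Delta)$):
\begin{itemize}
\item At $t=0$ we have $a_4=-3\ne0$ and $a_6=2\ne0$, so the reduction is multiplicative with $\operatorname{ord}\Delta=2$. This gives $I_2$.
\item At $\alpha_\pm$, $a_4=-3(1+t)\neq0$ and $\Delta$ has a simple zero. This gives $I_1$.
\item At $\infty$, put $s=1/t$. Then $s^4a_4=-3(s^3+s^4)$ has order $3$, and $s^6a_6=2s^4+3s^5+2s^6$ has order $4$. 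With $\operatorname{ord}\Delta=8$, the triple $(3,4,8)$ is exactly the $IV^*$ (type $\tilde E_6$) case.
\end{itemize}
Minimality follows because no point has $\operatorname{ord}a_4\ge4$ and $\operatorname{ord}a_6\ge6$ simultaneously; in particular $\infty$ has orders $(3,4)$.

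The third step is the consistency check and rationality. The Euler numbers sum to $8+2+1+1=12=e(S)$, so the fibration is not a product. It has $\chi(\cO_S)=\deg L=1$, and a minimal elliptic surface with a section and $\deg L=1$ is rational. The fiber list also confirms there are no further singular fibers.

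I expect no real obstacle here. The only places needing care are the behaviour at $\infty$, where one must homogenize with the correct weights $4$ and $6$ rather than forget the point, and the verification that the discriminant quadratic factor has the stated roots. Both are the short computations above.
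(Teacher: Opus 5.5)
Your proposal is correct and follows essentially the same route as the paper: compute $\Delta=-432\,t^2(4t^2+8t+5)$, read off $I_2,I_1,I_1$ at the finite zeros (where $a_4\neq0$) and $IV^*$ from the vanishing orders $(3,4,8)$ at $\infty$, note minimality, and deduce rationality from $\chi(\cO_S)=\deg L=1$ via the canonical bundle formula. Your displayed identity suppresses an overall factor $27$, since $4a_4^3+27a_6^2=27\bigl[(2+3t+2t^2)^2-4(1+t)^3\bigr]$, but this does not affect any vanishing order.
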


The marking in Section~\ref{mon:section} fixes the labels
$\alpha_+,\alpha_-$.

\begin{lemma}\label{init:height}
The Mordell--Weil group is infinite cyclic, generated by
\begin{equation}\label{init:explicit-section}
 P(t)=(1,\sqrt2\,t).
\end{equation}
Moreover, $P\cdot O=0$ and $\langle P,P\rangle=1/6$. The local
height contributions at $IV^*$ and $I_2$ are $4/3$ and $1/2$,
respectively.
\end{lemma}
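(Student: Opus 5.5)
Check first that $P(t)=(1,\sqrt2\,t)$ lies on \eqref{init:weierstrass}: substituting $x=1$ gives $1-3(1+t)+(2+3t+2t^2)=2t^2$, which equals $y^2$. Since $x,y$ are polynomials in $t$ of degrees $0\le 2$ and $1\le 3$, the section does not meet $O$ over $\mathbb A^1$. At $\infty$ we pass to the coordinate $s=1/t$ with $x=s^{-2}X$, $y=s^{-3}Y$. There $X=s^2$ and $Y=\sqrt2\,s^2$, both vanishing at $s=0$, so $P$ is still finite at $s=0$. Hence $P\cdot O=0$.

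Next I determine which fiber components $P$ meets. At $t=0$ the point $(x,y)=(1,0)$ is exactly the node of $y^2=x^3-3x+2=(x-1)^2(x+2)$, so $P$ passes through the singular point. On the minimal model it therefore meets the non-identity component of the $I_2$ fiber, and the local contribution is $1\cdot1/2=1/2$. At $s=0$ the Weierstrass model has $a_4=-3s^3(s+1)$ and $a_6=s^4(2s^2+3s+2)$. These satisfy $v(a_4)\ge2$, $v(a_6)=4$ and $v(\Delta)=8$, which is type $IV^*$. Since $P$ passes through the singular point $(X,Y)=(0,0)$, it meets a non-identity simple component of $IV^*$, and the contribution is $4/3$. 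Away from $0$ and $\infty$ there is nothing further to check: the $I_1$ fibers are irreducible, so they contribute nothing.

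The Shioda height formula on a rational elliptic surface ($\chi=1$) then gives
\[
\langle P,P\rangle=2\chi+2P\cdot O-\tfrac43-\tfrac12=2-\tfrac{11}{6}=\tfrac16 .
\]

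For the structure of the Mordell--Weil group I use the Shioda--Tate formula. The trivial lattice contains $E_6\oplus A_1$ of rank $7$, so $\MW$ has rank $8-7=1$. There is no torsion: the Oguiso--Shioda table lists $E_6\oplus A_1$ as giving $\MW\simeq A_1^*\langle\ldots\rangle$ of rank one, namely the lattice $\langle 1/6\rangle$ with trivial torsion. Alternatively, a torsion section would have height $0$, hence would be disjoint from $O$ with contributions summing to $2$. But the possible contributions are at most $4/3+1/2<2$, so no torsion section exists.

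Finally, the discriminant identity $\det(\text{triv})\cdot\det(\MW_{\mathrm{free}})/|\MW_{\mathrm{tors}}|^2=1$ for the unimodular $E_8$ gives $3\cdot2\cdot h=1$. So the generator has height $1/6$, and $P$ is a generator.

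The main obstacle is the fiber-component bookkeeping at $\infty$. Here I will carry out the Tate algorithm explicitly to confirm that $P$ meets a non-identity simple component rather than the multiplicity-$2$ or $3$ components, which would give contribution $2/3$ or $0$ instead of $4/3$.
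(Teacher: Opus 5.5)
Your proof is correct and follows the paper for most steps: the substitution check, $P\cdot O=0$ via $(X,Y)=(s^2,\sqrt2\,s^2)$ at infinity, the observation that $P$ passes through the singular point of the Weierstrass fiber at $0$ and at $\infty$, the height computation, and the Shioda--Tate rank count. Your torsion argument (a torsion section would need contributions at least $2$, but they are at most $4/3+1/2$) is also the paper's. The paper phrases it as the bound $\langle R,R\rangle\geq 2-4/3-1/2=1/6$ for every nonzero section $R$.

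The genuine difference is the generation step. The paper reuses the same bound: if $P=nR$, then $1/6=n^2\langle R,R\rangle\geq n^2/6$, so $n=\pm1$. You instead use Shioda's determinant formula together with unimodularity of $\operatorname{NS}(S)$ to get $6h=1$ for the height $h$ of a generator. Both are valid. The paper's route is more economical: one inequality gives both torsion-freeness and generation, using only the height formula and the list of local contributions. Yours determines the Mordell--Weil lattice as $\langle 1/6\rangle$ a priori, independently of the explicit section, but needs the discriminant formula.

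The obstacle you flag at $\infty$ is not a real one. Since $P\cdot F=1$ for the fiber class $F=\sum m_i\Theta_i$, a section can meet only a multiplicity-one component. Passing through the cusp of the Weierstrass fiber excludes $\Theta_0$. For $IV^*$ the possible contributions are just $0$ and $4/3$; there is no $2/3$ case. So no further run of Tate's algorithm is needed.

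A minor correction: $v(a_4)=3$ here, and type $IV^*$ requires $v(a_4)\geq 3$. This is forced anyway by $v(a_6)=4$ and $v(\Delta)=8$.
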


Set $M=\cO_S(P-O)$. Translation by $6P$ extends to an
automorphism $t_{6P}$ of $S$. Since the component groups at $IV^*$ and
$I_2$ have orders three and two, ${6P}$ meets the identity component of
every fiber. Thus $t_{6P}$ preserves each fiber component.

\begin{lemma}\label{init:linearization}
There is an isomorphism
\begin{equation}\label{init:linearization-line}
 \mathcal{H}om(t_{6P}^*M,M)\simeq\pi^*\cO_{\PP^1}(1).
\end{equation}
\end{lemma}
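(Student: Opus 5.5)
We compute $t_{6P}^*M\otimes M^{-1}$ as a divisor class on $S$ and show it is a fibre class. First, $t_{6P}^*\cO_S(P-O)=\cO_S(t_{-6P}(P)-t_{-6P}(O))=\cO_S(-5P+7P_0-(-6P))$ in the obvious notation. Concretely, $t_{6P}^*M\simeq\cO_S(D)$ with $D=(P\ominus 6P)-(O\ominus 6P)$, where $\ominus$ is fibrewise subtraction and sections are identified with curves. So we need the class of
\[
 E=(P-O)-\bigl((-5P)-(-6P)\bigr)
\]
in $\Pic S$.

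Next we use the Shioda homomorphism. $\Pic S$ is generated by the zero section, the fibre class $F$, the fibre components not meeting $O$, and the sections. The map $Q\mapsto Q-O$ modulo the trivial lattice $T=\langle O,F,\Theta_{v,i}\rangle$ is a group isomorphism $\MW(S)\simeq\Pic S/T$. Therefore $(P-O)-((-5P)-O)+((-6P)-O)$ maps to $P+5P-6P=0$, so $E\in T$.

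To see that $E$ is a multiple of $F$, we pair it with the generators of $T$. By definition $E$ has degree zero on fibres. We now check its intersections with the non-identity components $\Theta_{v,i}$ at the $IV^*$ and $I_2$ fibres. Each of $P-O$, $(-5P)-O$ and $(-6P)-O$ is corrected by the component correction terms in Shioda's formula. Since $-6P$ meets identity components (the component groups have orders $3$ and $2$, as recorded before the lemma), $(-6P)\cdot\Theta_{v,i}=0$. Moreover, $-5P$ meets the component indexed by $-5\equiv 1\pmod 3$ at $IV^*$ and by $1\pmod 2$ at $I_2$, which are the same components that $P$ meets. Hence $(P-(-5P))\cdot\Theta_{v,i}=0$ for all $i$, and $E\cdot\Theta_{v,i}=0$.

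It remains to pair with $O$. The trivial lattice is spanned by $O$, $F$ and the $\Theta_{v,i}$, and the $\Theta$-part of $E$ vanishes because the intersection form on the $\Theta$'s is negative definite. So $E\equiv aO+bF$. Since $E\cdot F=0$ we get $a=0$, and then $E=bF$. We compute $b=E\cdot O$ from
\[
 b=P\cdot O-O^2-(-5P)\cdot O+(-6P)\cdot O=0+1-(-5P)\cdot O+(-6P)\cdot O .
\]

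The intersection numbers come from the height pairing of Lemma~\ref{init:height}. For a section $Q$ on a rational elliptic surface, $\langle Q,Q\rangle=2+2\,Q\cdot O-\sum\mathrm{contr}_v(Q)$. The contributions are $4/3$ at $IV^*$ and $1/2$ at $I_2$ when $Q$ lies on non-identity components, and $0$ otherwise.

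For $-6P$: $\langle -6P,-6P\rangle=36/6=6$ with no contributions, so $(-6P)\cdot O=2$.

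For $-5P$: $\langle -5P,-5P\rangle=25/6$. The section $-5P$ lies on non-identity components at both fibres, so the contributions total $4/3+1/2=11/6$. Then $2(-5P)\cdot O=25/6-2+11/6=4$, giving $(-5P)\cdot O=2$.

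Hence $b=1-2+2=1$, so $E\sim F=\pi^*\cO_{\PP^1}(1)$, and
\[
 \mathcal{H}om(t_{6P}^*M,M)=M\otimes t_{6P}^*M^{-1}\simeq\cO_S(E)\simeq\pi^*\cO_{\PP^1}(1).
\]

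The main obstacle is sign and orientation bookkeeping. One must confirm that pulling back by $t_{6P}$ sends the section $Q$ to $Q-6P$ rather than $Q+6P$. By the symmetry $Q\mapsto -Q$ the alternative gives $(7P)\cdot O$ and $(6P)\cdot O$, and these yield the same numbers, so the answer $b=1$ is unaffected. One must also double-check the height-formula normalization against Lemma~\ref{init:height}, which fixes $P\cdot O=0$ and $\langle P,P\rangle=1/6=2-4/3-1/2$ consistently.
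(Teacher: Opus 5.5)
Your main computation is correct and follows essentially the paper's route. You show that $\mathcal{H}om(t_{6P}^*M,M)$ is a multiple of the fibre class, then get the degree by intersecting with $O$ via the height formula. The paper reaches the fibre class from triviality of translation on $\Pic^0$ of the generic fibre and computes $(P-O)\cdot 6P$ from $\langle 6P,6P\rangle$ and $\langle P,6P\rangle$. You use the Shioda isomorphism and the self-heights of $-5P,-6P$ instead. Translating by $6P$ shows these give the same numbers.

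Your closing paragraph, however, is false and should be removed. The symmetry $Q\mapsto -Q$ sends $-5P$ to $5P$, not to $7P$. With the opposite convention, $\langle 7P,7P\rangle=49/6$ with contributions $4/3+1/2=11/6$, so $(7P)\cdot O=4$ and $b=1-4+2=-1$. Indeed
\[
\mathcal{H}om(t_{-6P}^*M,M)\simeq t_{-6P}^*\mathcal{H}om(M,t_{6P}^*M)\simeq\pi^*\cO_{\PP^1}(-1).
\]
This bundle has no nonzero sections, so the sign is exactly what allows $\Phi_0$ to exist later in the paper.

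The convention you used in the main line is the correct one. Justify it directly rather than by the symmetry argument. For an automorphism $\sigma$, the pullback of the canonical section of $\cO_S(D)$ vanishes on $\sigma^{-1}(D)$, so $t_{6P}^*\cO_S(D)\simeq\cO_S(t_{6P}^{-1}D)$, and $t_{6P}^{-1}$ is translation by $-6P$.

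Also delete the garbled first formula $\cO_S(-5P+7P_0-(-6P))$; the expression $D=(P\ominus 6P)-(O\ominus 6P)$ that follows it is the correct one.
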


Choose a morphism \(\Phi_0:t_{6P}^*M\to M\) whose zero divisor is
exactly \(S_{\alpha_-}\).  It exists by
\eqref{init:linearization-line}, and is unique up to a nonzero scalar.
Set
\[
 U=\PP^1\setminus\{\infty,0,\alpha_+,\alpha_-\},\qquad
 S_U=\pi^{-1}(U),\qquad \Phi=\lambda\Phi_0,
\]
where \(\lambda\in\CC^*\) will be chosen sufficiently small.
Write \(M^\times\) for the complement of the zero section in \(M\). And write \(M^\times_U\) for \(M^\times|_{S_U}\).
The lifted translation generating the cyclic action is
\begin{equation}\label{init:generator}
 F(v_x)=\Phi_{x-{6P}}(v_x)\in M_{x-{6P}}^\times,
 \qquad x\in S_U,\quad v_x\in M_x^\times.
\end{equation}
Indeed, \((t_{6P}^*M)_{x-{6P}}=M_x\); thus \(F\) covers translation by
\(-{6P}\). We use this generator throughout, including in the period
marking of Section~\ref{mon:section}.

\begin{proposition}\label{init:open}
For every sufficiently small \(\lambda\ne0\), the action generated
by \(F\) on \(M^\times_U\) is free and properly discontinuous.
Its quotient
\begin{equation}\label{init:open-family}
 X_U=M^\times_U/\langle F\rangle\longrightarrow U
\end{equation}
is a proper holomorphic submersion with compact complex two-torus fibers.
\end{proposition}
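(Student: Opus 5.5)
The plan is to work fiberwise first and then use properness of the action relative to the base. Over a point $u\in U$ the fiber $E_u=S_u$ is a smooth elliptic curve, and $M|_{E_u}=\cO_{E_u}(P(u)-O(u))$ is a degree-zero line bundle. The generator $F$ preserves $M^\times|_{E_u}$, since it covers translation by $-6P(u)$ on $E_u$. I would uniformize: write $E_u=\CC/\Lambda_u$. A degree-zero line bundle on $E_u$ is a quotient of $\CC\times\CC^*$ by $\Lambda_u$ acting through a unitary character. So the total space $M^\times|_{E_u}$ is a quotient of $\CC\times\CC^*\cong\CC^2$ (via $\exp$ on the second factor) by a lattice $\Gamma_u\simeq\Z^3$.

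The key step is to lift $F$ to a translation of $\CC^2$. Any isomorphism $t_{6P}^*M|_{E_u}\to M|_{E_u}$ is a nonzero scalar. In the trivialization pulled back to $\CC\times\CC^*$ it is a constant multiplication composed with translation by a lift $-6\tilde p(u)$ of $-6P(u)$. So $F$ lifts to
\[
 (z,w)\mapsto\bigl(z-6\tilde p(u),\,c_u\lambda w\bigr),
\]
with $c_u\in\CC^*$ determined by $\Phi_0$, up to unitary factors coming from the choice of lift. After taking logarithms, $\langle F\rangle$ together with $\Gamma_u$ generates a subgroup $\Lambda'_u\subset\CC^2$ of rank four. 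The fiber of $X_U$ is then $\CC^2/\Lambda'_u$. We need $\Lambda'_u$ to be a lattice, i.e.\ of full real rank four. The lattice $\Gamma_u$ spans $\CC\times i\R$ in log coordinates, because the character is unitary and the $\CC^*$-directions from the $w$-loop are $2\pi i\Z$. The fourth generator has real part of its second coordinate equal to $\log|c_u\lambda|+(\text{bounded term from }\tilde p)$. This is nonzero as soon as $|\lambda|$ is small compared with the relevant norm of $\Phi_0$ at $u$. Then $\Lambda'_u$ is a lattice, and $F$ acts freely and properly discontinuously on the fiber with torus quotient.

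The main obstacle is uniformity. This is why $\Phi_0$ was chosen with zero divisor exactly $S_{\alpha_-}$, and why $\lambda$ must be small. The quantity to control is the norm $\|\Phi_0\|$ with respect to flat hermitian metrics on $M|_{E_u}$ and $t_{6P}^*M|_{E_u}$. Unitary flat metrics exist fiberwise because the bundles have degree zero. Both metrics are canonical up to a scalar depending smoothly on $u$, and that scalar cancels in $\|\Phi_0\|$ since $t_{6P}^*$ of the flat metric is flat. So $u\mapsto\|\Phi_0\|_u$ is a well-defined continuous positive function on $U$. I would then show that $\log\|\Phi_0\|_u$ is bounded above on $U$ after a correction. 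The natural candidate is a harmonic/Green-type function whose only logarithmic pole is at $\alpha_-$, where it tends to $-\infty$. Near the other singular fibers $\infty,0,\alpha_+$ I would analyze the Néron model and heights, using Lemma~\ref{init:height}. Since $6P$ meets identity components, the flat metrics extend continuously, and the height is bounded near those fibers. Hence $\sup_U\|\Phi_0\|_u<\infty$. If the bound fails at the multiplicative fibers, the fallback is that the exponent $\log|c_u|$ grows like $\log|t-t_0|$ times a positive multiple. Either way $|\lambda|\sup\|\Phi_0\|<1$ forces the real part of the fourth generator to stay strictly negative on all of $U$.

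Finally, I would assemble the fibers. The map $M^\times_U\to U$ is a holomorphic submersion and $F$ commutes with it. The real part of the log-norm gives a continuous function $h$ on $M^\times_U$ with $h\circ F\le h-\delta$ locally uniformly in $u$, where $\delta$ comes from the uniform bound on $\|\lambda\Phi_0\|$ and the $\Gamma_u$-invariance of the flat norm. This function gives a locally compact fundamental domain $\{-\delta\le h\le0\}$ over compact subsets of $U$, which is compact over each compact subset of $U$. That yields freeness, proper discontinuity, and properness of $X_U\to U$. The quotient is then a complex manifold, with a holomorphic submersion to $U$ whose fibers are the tori $\CC^2/\Lambda'_u$ found above.
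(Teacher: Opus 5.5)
Your fiberwise analysis is sound. Identifying each fiber as $\CC^2/\Lambda'_u$ through a unitary-character uniformization is a legitimate, more explicit substitute for the paper's argument. The paper instead rigidifies at $O$, observes that $F$ is a translation of the semiabelian group $M_u^\times$, and concludes that the quotient is a compact connected commutative complex Lie group.

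The gap is the step you yourself flag as the main obstacle. A single $\lambda$ must satisfy $|\lambda|\,\|\Phi_0\|_u<1$ for every $u$ in the \emph{noncompact} set $U$, so you need $\sup_U\|\Phi_0\|_u<\infty$. The reason is that this function is continuous on the connected set $U$ and tends to $0$ near $\alpha_-$. If it ever equals $1/|\lambda|$, the fourth period lies in the real span of $\Gamma_u$, and that fiber's quotient is not a torus. You do not prove the bound. The Green-function heuristic is not an argument. The assertion that the flat metrics extend continuously over $\infty,0,\alpha_+$ is unproved. The fallback, that $\log|c_u|$ behaves like a \emph{positive} multiple of $\log|t-t_0|$, assumes exactly the sign that is at stake: growth like a negative multiple would defeat every $\lambda$. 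This worry is real. At $0$ the naive fiber multiplier $\beta$ of $F$ has a pole of order three (Lemma~\ref{fill:tate}), and boundedness of the flat norm depends on a cancellation against the degeneration of the flat metric that you never exhibit.

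The missing idea, which is what the paper uses, is to fix an arbitrary Hermitian metric on $M$ over the \emph{compact} surface $S$. Since $t_{6P}$ is an automorphism of all of $S$ and $\Phi_0$ is a global holomorphic section of $\mathcal{H}om(t_{6P}^*M,M)$, its pointwise norm is bounded by some constant $C$. Hence $\|F(v)\|\le|\lambda|C\|v\|$ everywhere on $M^\times_U$. With $|\lambda|C<1$ this gives freeness and proper discontinuity immediately. Over each compact $K\subset U$, invertibility of $F$ gives a lower bound $\|F(v)\|\ge c_K\|v\|$, which yields the compact annular fundamental domain and hence properness.

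The same idea also closes your own route without any N\'eron-model or height analysis. On a compact fiber $E_u$ the flat and global norms are comparable. The iterate $F^n$ multiplies flat norms by exactly $(|\lambda|\,\|\Phi_0\|_u)^n$, and global norms by at most $(|\lambda|C)^n$. Taking $n$th roots and letting $n\to\infty$ removes the comparison constants and gives $\|\Phi_0\|_u\le C$ for all $u\in U$.
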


\begin{proof}
Choose a Hermitian metric on $M$. Compactness of $S$ gives
$\|\Phi_0\|\leq C$; take $c=|\lambda|C<1$. On $S_U$, the lift
$F$ is invertible and $\|F(v)\|\leq c\|v\|$. Positive iterates
contract and negative iterates expand, so the action is free.
A compact set has norms bounded above and away from zero, and thus
meets only finitely many translates. The quotient is a complex
manifold, and its map to $U$ is a submersion.

For compact $K\subset U$, invertibility gives a uniform lower bound
$\|F(v)\|\geq c_K\|v\|$ on $S_K=\pi^{-1}(K)$, with $c_K>0$. Every orbit
meets the compact annulus
\[
 \{v\in M|_{S_K}:c_K\leq\|v\|\leq1\},
\]
whose image is the inverse image of $K$ in $X_U$. Thus the map is proper.

Rigidifying $M_t$ at $O(t)$ gives a semiabelian group
\[
 1\longrightarrow\CC^*\longrightarrow M_t^\times
 \longrightarrow S_t\longrightarrow0.
\]
The lift $F$ differs from a group translation over $-{6P}(t)$ by a
nowhere-zero holomorphic function on $S_t$, hence by a constant.
It is therefore a translation. The fiber quotient is a compact
connected commutative complex Lie group of dimension two, hence a
complex two-torus.
\end{proof}

Fix such a \(\lambda\). The next proposition supplies the origins
used in the Mumford constructions and the meridians used to compute
the fundamental group.

\begin{proposition}\label{init:section}
The family \eqref{init:open-family} has a holomorphic section induced
by a section \(e\) of \(M|_O\) with a simple zero at \(\alpha_-\).
It extends across the rank-one fillings at \(0,\alpha_+\) and the
rank-two filling at \(\alpha_-\) constructed below.
\end{proposition}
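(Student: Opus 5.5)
The plan is to build the section from the restriction of \(M\) to the zero section and check that \(F\) carries the resulting locus into itself. Since \(O\cdot O=-1\) on a rational elliptic surface and \(P\cdot O=0\) by Lemma~\ref{init:height}, we have \(M|_O=\cO_S(P-O)|_O\simeq\cO_{\PP^1}(0-(-1))=\cO_{\PP^1}(1)\). So \(M|_O\) has a section \(e\) with a single simple zero, which we place at \(\alpha_-\). For \(t\in U\), the vector \(e(t)\in M^\times_{O(t)}\) lies in the fiber of \(M^\times_U\) over \(t\). Its image in \(X_U\) is then a holomorphic section \(\sigma\) of \eqref{init:open-family}.

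Next we check that \(\sigma\) does not depend on choices and that it lands at the natural origin. \(F\) covers translation by \(-6P\), so \(F(e(t))\) lies over \(-6P(t)\), not over \(O(t)\). The section is still well defined, because the quotient map is a function on \(M^\times_U\). The relevant observation is that \(e(t)\) is a point of the semiabelian group \(M_t^\times\) from the proof of Proposition~\ref{init:open}, and it lies on the fiber over the identity \(O(t)\). Rigidifying at \(O(t)\) using \(e(t)\) itself makes \(\sigma(t)\) the identity of the torus \(X_t\). This is the origin that the Mumford constructions require.

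The substantive step is extension across the degenerations.
\begin{enumerate}[label=(\roman*)]
\item At \(0\) and \(\alpha_+\), the rank-one Mumford fillings are built by gluing \(M^\times\) over a neighbourhood of the \(I_2\) and \(I_1\) fibers and quotienting by \(F\). The line bundle is trivialized near \(O\), and \(e\) is nonvanishing there. The section \(O\) meets the identity component of each fiber, and \(t_{6P}\) preserves components. So \(e\) lands in the smooth locus of the chart of the Mumford model containing \(O\), and the section extends holomorphically by taking closure in that chart.
\item At \(\alpha_-\), \(\Phi_0\) vanishes on \(S_{\alpha_-}\), so \(F\) degenerates there and the fiber acquires the rank-two (double multiplicative) degeneration. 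The simple zero of \(e\) at \(\alpha_-\) is matched to the simple zero of \(\Phi_0\). In local coordinates \(s=t-\alpha_-\), the toric coordinates are roughly \((\text{fiber coordinate of } M,\ \text{elliptic } q)\) with \(F\) acting by \(v\mapsto \lambda s\,u\,v\). Here \(e\sim s\) has valuation exactly matching one step of the \(\Z\)-action. So \(\sigma\) extends to a point in the smooth part of a central component, rather than escaping to infinity or into the double locus.
\end{enumerate}

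The main obstacle is (ii). I would verify it in explicit local Mumford coordinates near \(O\cap S_{\alpha_-}\), by checking that the valuation vector of \(e\) equals a lattice point of the polyhedral decomposition interior to a maximal cell. That condition places the limit in the smooth locus of the special fiber.
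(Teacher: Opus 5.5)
Your construction of \(e\) and of the section over \(U\) agrees with the paper's. The gap is in step (ii), which is the real content of the statement. The combinatorial criterion you propose is the wrong one, and if you carried it out it would fail.

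In Proposition~\ref{fill:a2} the fan is the cone at height one over the periodic triangulation of \(N_\R=\R^2\). A family of points with coordinates \(t^{a_i}\) times units specializes into the torus orbit of the cone containing \((a,1)\) in its relative interior:
\begin{itemize}
\item a vertex gives the open orbit of a component;
\item the interior of an edge gives a double curve;
\item the interior of a triangle gives a triple point.
\end{itemize}
So a valuation ``interior to a maximal cell'' would put the limit at a triple point. There the local equation \(t=z_0z_1z_2\) has vanishing differential, so no section can pass through it: \(f\circ s\) being the identity forces \(df\ne0\) at \(s(0)\). Moreover, a unimodular triangle has no interior lattice points, so your test can never be satisfied.

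The correct condition is that the valuation be a \emph{vertex}. In the coordinates \((z,v)\), the periods \(h\) and \(F\) have valuations \((1,0)\) and \((0,1)\). The first comes from the \(I_1\) fiber and the second from the simple zero of \(\Phi_0\), so together they form a basis of \(\Z^2\). The representative \((1,e(t))\) of the section has valuation \((0,1)\), because \(e\) vanishes simply at \(\alpha_-\). This is a vertex of the triangulation, as is every point of \(\Z^2\), so the section extends in that vertex chart and meets the open torus of the \(dP_6\) component. That \((0,1)\) differs from \((0,0)\) by the valuation of \(F\) is what your remark about ``one step of the \(\Z\)-action'' was detecting.

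A smaller point concerns (i). The filling at \(0\) is not \(M^\times\) over the \(I_2\) fiber modulo \(F\); that quotient would have two components, one over each fiber component. It is the one-component model of Proposition~\ref{fill:rankone}, obtained after the change \(x=v\), \(y=v^2/z\). So the relevant fact is not that \(O\) meets the identity component. What matters is that \((1,e(t))\) has valuation \((0,0)\), that the unimodular change preserves this, and that the point therefore lies in the zero-vertex chart.
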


\begin{proof}
Since \(\deg(M|_O)=(P-O)\cdot O=1\), choose
\(e\in H^0(\PP^1,M|_O)\) with divisor \(\alpha_-\).
Its nonzero restriction to \(U\), followed by the quotient map,
gives the required section and rigidifies each semiabelian group.
At \(0\) and \(\alpha_+\), its valuation is zero, so it lies in the
zero-vertex chart of the rank-one models of
Proposition~\ref{fill:rankone}.  At \(\alpha_-\), both \(e\) and
\(\Phi\) vanish simply.  In the multiplicative coordinates used in
Proposition~\ref{fill:a2}, the period valuations form a basis of
\(\Z^2\), and the representative \(e\) has valuation \((0,1)\).
This is a vertex of the periodic unimodular triangulation, hence the
section extends in its toric chart.  These extension statements are
verified together with the respective local models.
\end{proof}

We complete $X_U\to U$ using the Mumford construction at
$0,\alpha_+,\alpha_-$ and a logarithmic transformation at infinity.
At the $I_2$ point, the period lattice is primitive even though the
elliptic fiber has two components; the Mumford construction therefore
uses a different compactification. At infinity, the base change
$s=r^3$ allows a logarithmic transformation of multiplicity three.
The identifications over punctured discs determine the topology of
the resulting threefold and will be specified throughout.

We use the contraction $F$ in \eqref{init:generator}, covering
translation by $-{6P}$, with $|\lambda|$ sufficiently small for all four
local constructions. A smooth filling means one with smooth total
space; its map to the disc need not be a submersion.

\subsection{The rank-one Mumford construction}

We describe the two rank-one degenerations directly, using the
multiplicative uniformization of elliptic curves as in
\cite[Proposition~1.7]{Engel}. At these two points the linearization
is invertible, so only one direction degenerates. The exponent vector
of a gluing map records the orders of its two multipliers in the local
parameter.

\begin{lemma}\label{fill:tate}
At each of $0$ and $\alpha_+$, let $t$ be a local parameter vanishing
at that point. The punctured torus family has a presentation
\begin{equation}
 \begin{gathered}
 X_t\simeq(\CC^*)^2/\langle h,g\rangle,\\
 h(z,v)=(qz,pv),\qquad g(z,v)=(az,\lambda bv),
 \end{gathered}
 \label{fill:tate-period}
\end{equation}
where $a,b,\varepsilon,\varepsilon_P$ are holomorphic units and
\[
 \begin{aligned}
 (q,p)&=(t^2\varepsilon,t\varepsilon_P)&&\text{at }0,\\
 (q,p)&=(t\varepsilon,\varepsilon_P)&&\text{at }\alpha_+.
 \end{aligned}
\]
For the lift of $F$ chosen below, the second generator is $g=Fh^3$
at $0$ and $g=F$ at $\alpha_+$. The exponent vectors of the two
generators are therefore $(2,1),(0,0)$ at $0$ and
$(1,0),(0,0)$ at $\alpha_+$. In both cases the nonzero vector is primitive.
\end{lemma}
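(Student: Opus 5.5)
We work fiberwise over a small punctured disc $\Delta^*$ around $0$ or $\alpha_+$, and build the presentation in two layers: first the Tate uniformization of the elliptic fibers $S_t$, then the $\CC^*$-bundle $M^\times_t$ over it together with the contraction $F$. At an $I_n$ fiber ($n=2$ at $0$, $n=1$ at $\alpha_+$) the elliptic surface has multiplicative reduction. After shrinking $\Delta$ (and, if needed, using a local double cover of the base to make the reduction split; for $I_n$ on a rational elliptic surface over $\CC$ the local monodromy is unipotent, so no base change is required) we get a Tate parameter $q(t)=t^n\varepsilon(t)$ with $\varepsilon$ a holomorphic unit and $S_t\simeq\CC^*/q^\Z$. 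This gives $q=t^2\varepsilon$ at $0$ and $q=t\varepsilon$ at $\alpha_+$.

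Next we lift to $M^\times_t$. The pullback of $M^\times_t$ to $\CC^*$ is a trivial $\CC^*$-bundle, with fiber coordinate $v$ rigidified at the origin by the section $e$ of Proposition~\ref{init:section}; this is possible because $e$ has valuation zero at $0$ and $\alpha_+$. The deck transformation $z\mapsto qz$ then lifts to a bundle automorphism $h(z,v)=(qz,p(z)v)$. By the semiabelian argument in the proof of Proposition~\ref{init:open}, the function $p(z)$ is constant in $z$. It is then the value of the theta-type multiplier, i.e.\ the pairing of $M_t\in\Pic^0(S_t)\simeq S_t$ with the period, and this equals the Tate coordinate $u(t)$ of the point $P(t)$ (up to sign and normalization). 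Its valuation is determined by which component $P$ meets. By Lemma~\ref{init:height} the local contribution at $I_2$ is $1/2$, so $P$ meets the non-identity component at $0$. Hence $u=t\cdot$unit, that is, $p=t\varepsilon_P$. At $\alpha_+$ (an $I_1$ fiber) every section meets the identity component, so $p=\varepsilon_P$.

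We then handle $F$. It covers translation by $-6P$, so in coordinates it is $(z,v)\mapsto(u^{-6}c\,z,\lambda\phi(t)v)$ up to the chosen lift. Here $\phi$ comes from $\Phi_0$, which is invertible near $0$ and $\alpha_+$ because its zero divisor is $S_{\alpha_-}$, and $c$ is a unit. At $0$ we have $u^{-6}\sim t^{-6}$. Composing with $h^3$ multiplies by $q^3\sim t^6$ in $z$ and by $p^3\sim t^3$ in $v$. We must therefore choose the lift of $F$ (the ambiguity is multiplication by powers of $h$ and a choice of logarithm) so that the $z$-multiplier of $g=Fh^3$ is a unit. The $v$-multiplier of $F$ itself must also carry a $t^{-3}$ factor from the theta-cocycle of $M$ twisted by the translation; this cancels against $p^3$, leaving $\lambda b$ with $b$ a unit. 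Making this $t^{-3}$ explicit requires tracking the cocycle: translating by $w$ changes the multiplier of $M$ by $w$-dependent factors in the quasi-periodicity. At $\alpha_+$, $u$ is a unit and $p$ is a unit, so $g=F$ already has unit multipliers. Since $(h,g)$ is obtained from the period lattice generators by a unimodular change, they generate the full deck group of $M^\times_t\to X_t$. The exponent vectors can then be read off directly, and $(2,1)$ and $(1,0)$ are visibly primitive.

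The main obstacle is the valuation bookkeeping for the $v$-component of the lift of $F$ at $0$: we have to show that its multiplier has order exactly $-3$, so that $Fh^3$ has exponent $(0,0)$. I would compute this with the explicit theta function for $\cO(P-O)$ on the Tate curve, where $M$ corresponds to the factor of automorphy $z\mapsto u^{-1}$ (more precisely, multiplication by $u$ up to sign). The translated bundle $t_{6P}^*M$ then has the same factor of automorphy, and the isomorphism $\Phi_0$ becomes a constant in $z$. The order bookkeeping reduces to the quadratic identity $v(p^{k})$ against $v(u^{k})$ with $k=3$, $n=2$, $v(u)=1$: that is, $3\cdot1=3$ and $6\cdot1=3\cdot2$.
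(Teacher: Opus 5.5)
Your setup matches the paper's: Tate uniformization, $p$ of order one at the $I_2$ fiber because $P$ meets the non-identity component, and passing to $g=Fh^3$ so that the $z$-multiplier $q^3p^{-6}$ (your $q^3u^{-6}$) is a unit. However, the one nontrivial claim of the lemma at $0$ is not proved. That claim is that the $v$-multiplier of $g=Fh^3$ is $\lambda$ times a holomorphic unit, so $g$ has exponent vector $(0,0)$. You call this the main obstacle and defer it to a theta-function computation you never carry out. The reasoning you do give is also inconsistent. You first write $F(z,v)=(u^{-6}cz,\lambda\phi(t)v)$ with $\phi$ coming from $\Phi_0$, ``which is invertible''. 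That would make $\phi$ a unit and give $g$ the exponent vector $(0,3)$, not $(0,0)$. You then assert an extra factor $t^{-3}$ ``from the theta-cocycle of $M$ twisted by the translation'' without saying where it comes from. The closing identity $6\cdot1=3\cdot2$ only re-checks two things: that $q^3u^{-6}$ is a unit and that $p^3$ has order three. It says nothing about the order of the multiplier of $F$ itself.

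The missing idea concerns frames. The coordinate $v$ is a regular frame of $M$ only on the chart of the identity component, where $z$ has valuation zero. Invertibility of $\Phi_0$ near $0$ gives a unit multiplier only for a lift that maps this chart to itself. Because $6P$ is narrow, $-6P$ has the unit representative $q^3p^{-6}$, and the lift over it is exactly $Fh^3$. So its multiplier $\lambda\beta p^3$ is $\lambda$ times a unit, i.e.\ the multiplier $\beta$ of $F$ equals $b\,p^{-3}$ and has order $-3$. By contrast, $F$ covers $p^{-6}$ and lands in $h^{-3}$ of the identity chart, where the regular coordinate is $p^{3}v$; this is why your $\phi$ is not a unit. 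This is the paper's one-line argument, and it needs no explicit theta functions. At $\alpha_+$ your argument is fine as written, since $u^{-6}$ is already a unit representative there.
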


\begin{proof}
Write $E_t=\CC^*/q(t)^{\Z}$ and represent $P(t)$ by $p(t)$.
In this uniformization, the line bundle $\cO_{E_t}(P(t)-O(t))$
is glued by multiplying its linear fiber coordinate by $p(t)$;
this is the standard description of $\Pic^0(E_t)$~\cite{BL}.
Thus $M_t^\times$ is obtained by the gluing
$(z,v)\sim(qz,pv)$. At the $I_2$ fiber, $q$ has order two and
$p$ has order one because $P$ meets the nonidentity component
(Lemma~\ref{init:height}). At the $I_1$ fiber, $q$ has order one
and $p$ is a unit. This gives the stated formulas for $h$.

Choose the lift of the contraction in the form
$F(z,v)=(p^{-6}z,\lambda\beta v)$. At $0$, the point $6P$ has
the unit representative $p^6q^{-3}$. Replacing $F$ by $g=Fh^3$
makes its first multiplier $q^3p^{-6}$ a unit. This is an integral
change of generators: $\langle h,F\rangle=\langle h,Fh^3\rangle$,
so it does not change the quotient. At $\alpha_+$, the multiplier
$p^{-6}$ is already a unit and we take $g=F$.

In both cases this is the lift over a unit representative of $-6P$.
Since $6P$ meets the identity component and $\Phi_0$ extends
invertibly, its fiber multiplier in a regular frame is a holomorphic
unit. Restoring the scalar $\lambda$ gives the asserted form of $g$.
The original multiplier $\beta$ need not be a unit at $0$; it is the
adjusted generator $Fh^3$ that has exponent vector $(0,0)$.
Finally, both $(2,1)$ and $(1,0)$ are primitive in $\Z^2$.
\end{proof}

We recall the portion of the periodic fan construction that we use
\cite{Mumford,Engel}. A multiplicative period
\[
 g(z_1,z_2,t)=
 (t^{b_1}u_1(t)z_1,t^{b_2}u_2(t)z_2,t),
 \qquad u_i\in\cO_\Delta^{\times},
\]
acts on the extended valuation lattice by
$(a,m)\mapsto(a+m(b_1,b_2),m)$. A regular periodic subdivision at
height one gives smooth toric charts, and the unit factors extend by
the torus action. For the rank-two Mumford construction, a complete subdivision modulo
the period lattice gives a proper analytic quotient. In rank one the
same statement holds after the valuation-zero period compactifies the
remaining $\CC^*$ direction to an elliptic curve. The quotient is
Hausdorff: the periodic charts are locally finite near the central
fiber, and the remaining unit period has a compact annular fundamental
domain. We verify these conditions in each construction.

\begin{proposition}
\label{fill:rankone}
At the $I_2$ point and at $\alpha_+$, the punctured torus family has
a proper filling with smooth total space and a one-component
rank-one semiabelic central fiber $W$. Its integral specialization is
\[
 \specialize^q:H^q(W;\Z)\xrightarrow{\sim}H^q(X_t;\Z)^T,
 \qquad 0\leq q\leq4.
\]
The distinguished section extends across these fillings.
\end{proposition}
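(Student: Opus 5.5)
The plan is to reduce both cases to a single normal form via an integral change of multiplicative coordinates, build the Mumford model explicitly, and compute specialization by comparing two Mayer--Vietoris/Wang sequences. By Lemma~\ref{fill:tate}, the family is $(\CC^*)^2\times\Delta^*/\langle h,g\rangle$. The generator $g$ has exponent vector $(0,0)$, and $h$ has a primitive exponent vector $(b_1,b_2)$, equal to $(2,1)$ at $0$ and $(1,0)$ at $\alpha_+$. Choose $A\in GL_2(\Z)$ with $A(b_1,b_2)^T=(1,0)^T$; at $0$ take rows $(0,1),(1,-2)$. Change coordinates by the monomial map $(z,v)\mapsto(w_1,w_2)=(z^{A_{11}}v^{A_{12}},z^{A_{21}}v^{A_{22}})$. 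This map is a torus automorphism, so it commutes with both generators and carries unit multipliers to unit multipliers. Afterwards
\[
 h(w_1,w_2)=(t\,u_1w_1,u_2w_2),\qquad g(w_1,w_2)=(c_1w_1,c_2w_2),
\]
with $u_i,c_i$ holomorphic units. Because $g$ has exponent $(0,0)$ and acts with $|\lambda|$ small in the direction coming from $v$, the pair of restricted multipliers $(c_1,c_2)|_{t=0}$ generates a discrete cocompact subgroup of the relevant torus. We must check this lattice condition, i.e.\ that $g$ and $h$ restricted to $t=0$ still give a rank-one degeneration and not something worse. I expect this to follow because $\log|c|$ and $\log|u|$ are independent when $\lambda$ is small.

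Next we build the model. Following the recalled periodic fan construction, take the subdivision of the extended valuation lattice $\Z\times\Z_{\ge0}$ at height one with vertices at the integers. The cone between consecutive vertices $(k,1)$ and $(k+1,1)$ is unimodular, so each chart is $\{xy=t\}\times\CC^*_{w_2}$, which is smooth. Period $h$ shifts $k\mapsto k+1$, so there is one chart modulo $h$. The charts are locally finite near $t=0$, and the unit period $g$ has a compact annular fundamental domain, so the quotient is Hausdorff and proper over $\Delta$. The central fiber $W$ is then obtained from a $\PP^1$-bundle over an elliptic curve $E_0$ (the $w_2$-direction modulo the restriction of $g$) by gluing its $0$- and $\infty$-sections via a translation. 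So $W$ is irreducible, and this irreducibility is exactly where primitivity is used: a nonprimitive vector would give $d$ components. The section $e$ has valuation $0$ at these points by Proposition~\ref{init:section}, so it corresponds to the vertex $k=0$ and lies in that chart, hence extends.

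For specialization, note that the total space $\mathcal X$ deformation retracts onto $W$, so $\specialize^q$ is the restriction $H^q(\mathcal X)\to H^q(X_t)$, and its image lies in the $T$-invariants. We compute both sides.
\begin{enumerate}[label=(\roman*)]
\item For $W$: $W$ is the quotient of a $\PP^1$-bundle $\bar L\to E_0$ by identifying two sections, i.e.\ the degeneration of $X_t=E_0$-bundle over a degenerating $\CC^*/t^\Z$. Mayer--Vietoris, or the Wang sequence for the $E_0\times$(nodal curve) structure up to the twist, gives $H^*(W;\Z)$ with ranks $1,3,4,3,1$ and no torsion.
\item For $X_t$: write $X_t=T^4$ with $T$ acting on $H^1=\Z^4$ unipotently with $N=T-1$ of rank one, $N e_1=0,\dots$. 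In the basis adapted to $(w_1,w_2)$, $T$ sends $\gamma\mapsto\gamma+\delta$ for one pair. Then $H^q(X_t)^T=\ker(\wedge^qT-1)$ has ranks $1,3,4,3,1$, computed directly on $\wedge^q\Z^4$.
\end{enumerate}
Finally, show that the map is an isomorphism on generators. On $H^1$ it is the inclusion of the classes pulled back from $E_0$ plus the class of the cycle going around the node loop; these span the saturated kernel of $N$ because the exponent is $1$. Higher degrees follow by cup products together with a direct check in $H^2$, where invariants include $\wedge^2$ of the invariant $H^1$ plus one extra class, which we identify with the fundamental class of the $\PP^1$-fiber direction.

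I expect the main obstacle to be this integral surjectivity, especially in degree $2$. Rationally the statement is the local invariant cycle theorem, but integrally one needs the extra invariant class in $H^2$ to be hit exactly and not a multiple of it. I would handle this by an explicit Wang-sequence computation for $\mathcal X\setminus W\to\Delta^*$, comparing it with the Mayer--Vietoris sequence of $W=\bar L/(\text{sections glued})$. The key point is that the gluing coefficient equals the exponent $1$, so no index appears.
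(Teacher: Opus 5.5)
Your construction agrees with the paper's. The paper also uses a unimodular monomial change sending $(2,1)$ to $(1,0)$: it takes $x=v$, $y=v^2/z$, which is your matrix up to inverting the second coordinate. It then uses the rank-one periodic fan with rays $(k,1)$, and gets a central fiber $\PP_E(\cO_E\oplus L)$ whose two boundary sections are glued by a translation. Primitivity gives one component, and the section extends through the zero-vertex chart. Your ``lattice condition'' is simpler than you make it. All that is needed is that the $g$-multiplier $c$ in the nondegenerating coordinate satisfies $0<|c(t)|\neq 1$ uniformly after shrinking the disc and $|\lambda|$. Here $c=\lambda b$ at $\alpha_+$, and $c=a(\lambda b)^{-2}$ in your coordinates at $0$. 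The period $h$ is absorbed by the shift of charts, so no independence statement about logarithms is required.

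The genuine gap is the integral specialization, which is the substantive part of the statement and which you leave as a plan.

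First, you assert that $H^*(W;\Z)$ is torsion-free without the reason that makes it so. The difference maps in the normalization (Mayer--Vietoris) sequence vanish because $L\in\Pic^0(E)$, so $c_1(L)=0$, and because the gluing translation is isotopic to the identity. You never use either fact.

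Second, and more important, nothing in the proposal shows that the extra invariant class $e_0e_3$ is hit with coefficient one. Here the basis is chosen with $T(e_3)=e_3+e_0$ and $e_0,e_1,e_2$ invariant. Your justification, ``the gluing coefficient equals the exponent $1$'', is not the operative fact. Primitivity of the exponent vector is what gives a single component, hence injectivity. The coefficient of $e_0e_3$ is instead governed by the degree of the collapse on the vanishing torus.

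The missing step, which the paper supplies, is a product homotopy model:
\begin{enumerate}
\item Trivialize $L$ topologically, isotope the gluing translation to the identity, and use that the normal bundles of the double curve are trivial.
\item Then the nodal collapse $X_t\to W$ becomes, up to homotopy, $\mathrm{id}_E\times c\colon E\times T^2\to E\times(S^2\vee S^1)$, where $c$ pinches one primitive circle.
\item One gets $c^*\beta^*=e_0$, and $c^*[S^2]^*=e_0e_3$ with coefficient one, because in the charts $xy=t$ the collapse is one-to-one off the pinched circle.
\item The image is therefore the subring generated by $e_0,e_1,e_2,e_0e_3$, which is exactly the invariant subring.
\item Torsion-freeness together with equal ranks then gives an isomorphism in every degree.
\end{enumerate}
Your proposed Wang-sequence comparison would still need this degree-one input. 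As written, the integral isomorphism is not established.
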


\begin{proof}
Use the generators of Lemma~\ref{fill:tate}. At $0$, make the
integral monomial change of coordinates
\[
 x=v,\qquad y=v^2/z.
\]
Its exponent matrix has determinant one, and it changes the first
generator to
\[
 h(x,y)=(t\varepsilon_P x,\varepsilon_P^2\varepsilon^{-1}y).
\]
Thus the exponent vector $(2,1)$ becomes $(1,0)$. At $\alpha_+$,
the original coordinates $(x,y)=(z,v)$ already have this property.
In these coordinates the multiplier of $g$ on the remaining $y$
direction is
\[
 c(t)=
 \begin{cases}
 a(t)^{-1}(\lambda b(t))^2,&\text{at }0,\\
 \lambda b(t),&\text{at }\alpha_+.
 \end{cases}
\]
After shrinking the discs and decreasing $|\lambda|$, we have
$0<|c(t)|<1$ uniformly. The quotient $\CC^*/c(t)^{\Z}$ defines
a smooth elliptic family $E\to\Delta$; a closed annulus gives a
compact fundamental domain.

In $\R e_1\oplus\R e_t$ take the rays and cones
\[
 \rho_k=\R_{\geq0}(k,1),\qquad
 \sigma_k=\R_{\geq0}(k,1)+\R_{\geq0}(k+1,1),
 \quad k\in\Z.
\]
Every cone is regular. The valuation-one period shifts $k$ to $k+1$,
so there is one orbit of components. The first quotient makes each
component of the infinite chain a $\PP^1$-bundle over $E$; the second
identifies consecutive boundary sections. Neither action has a
boundary stabilizer. Completeness in the degenerating direction,
together with properness of $E$, gives the required proper quotient.
Its local equation along the double curve is $xy=t$.

The normalization and gluing of $W$ are
\[
 \overline W=\PP_E(\cO_E\oplus L),\quad L\in\operatorname{Pic}^0(E),
 \qquad E_0\sim E_\infty\text{ by a translation }\tau.
\]
Since $c_1(L)=0$ and $\tau$ is isotopic to the identity, the
normalization sequence has zero difference maps on integral
cohomology. It gives, noncanonically,
\begin{equation}
 H^q(W;\Z)\simeq H^q(E\times\PP^1;\Z)\oplus H^{q-1}(E;\Z).
 \label{fill:rankone-cohomology}
\end{equation}
In particular these groups are torsion-free.

To determine the index of specialization, trivialize $L$
topologically and isotope the boundary translation to the identity.
The normal bundles of the double curve are topologically trivial.
The nodal collapse in the charts $xy=t$, together with a
trivialization off their collars, gives the homotopy-commutative diagram
\begin{equation}
\begin{tikzcd}[column sep=large, row sep=large]
 X_t \arrow[r,"c_t"] \arrow[d,"\simeq"']
   & W \arrow[d,"\simeq"] \\
 E\times T^2 \arrow[r,"\mathrm{id}_E\times c"']
   & E\times(S^2\vee S^1).
 \end{tikzcd}
 \label{fill:rankone-collapse}
\end{equation}
Here $c$ pinches one primitive circle. On the standard cellular chains,
$c_*(a)=0$, $c_*(b)=\beta$, and $c_*[T^2]=[S^2]$; the last coefficient
is one because the collapse is one-to-one off the pinched circle.
Choose integral cohomology generators so that
\[
 \begin{gathered}
 T(e_3)=e_3+e_0,\quad T(e_i)=e_i\ (i=0,1,2),\\
 c^*(\beta^*)=e_0,\quad c^*[S^2]^*=e_0e_3,
 \end{gathered}
\]
where $e_1,e_2$ come from $E$, and juxtaposition denotes exterior
product. The image is the subring generated by $e_0,e_1,e_2$ and
$e_0e_3$, which is exactly the invariant subring of
$\bigwedge^*\Z\langle e_0,e_1,e_2,e_3\rangle$.
The top class has coefficient one. Together with
\eqref{fill:rankone-cohomology}, this proves that specialization is
an integral isomorphism in every degree.

At both points the distinguished section has valuation zero.
The monomial change preserves this valuation, so the section lies
in the chart of the zero vertex and extends.
\end{proof}

\subsection{The rank-two Mumford construction}

At $\alpha_-$, choose a local parameter $t$. The two periods have
valuation vectors $(1,0)$ and $(0,1)$, so the punctured family is
\[
 (\CC^*)^2/\langle(tu_1,u_2),(u_3,tu_4)\rangle,
 \qquad u_i\in\cO_\Delta^\times,
\]
where a pair acts by coordinatewise multiplication. This is the local
form in \cite[Proposition~1.7]{Engel}. We use the same periodic fan
construction as in \cite[Proposition~1.8]{Engel}, following
Mumford~\cite{Mumford}.

\begin{proposition}\label{fill:a2}
The punctured family at $\alpha_-$ admits a proper filling
$\mathscr W\to\Delta$ with smooth total space. Its central fiber $W$
is obtained by identifying opposite boundary curves of the toric del
Pezzo surface $dP_6$, and the distinguished section extends across it.
Moreover, specialization induces integral isomorphisms
\[
 \specialize^q:H^q(W;\Z)\xrightarrow{\sim}
 H^q(\mathscr W_t;\Z)^T,\qquad 0\leq q\leq4.
\]
\end{proposition}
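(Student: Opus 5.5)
We follow the periodic fan construction of \cite[Proposition~1.8]{Engel} and treat the cohomology the same way as in the proof of Proposition~\ref{fill:rankone}. After shrinking the disc and decreasing $|\lambda|$, the unit factors $u_i$ extend by the torus action. We then work with the split form $g_1=(t,1)$, $g_2=(1,t)$ on the extended lattice $\Z^2\oplus\Z e_t$.

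\textbf{The fan and the quotient.} At height one we take the periodic unimodular triangulation of $\R^2$ whose vertices are $\Z^2$ and whose triangles are cut by the lines $x=\text{const}$, $y=\text{const}$ and $x-y=\text{const}$. The cones over these triangles are regular. Hence each chart is smooth and has local equation $xyz=t$ at the triple points. The period lattice acts by translations of $\Z^2$, so there is one vertex orbit, three edge orbits and two triangle orbits. The star of a vertex is a hexagon, so each component of the infinite central fiber is the toric surface of the hexagon fan, namely $dP_6$. In the quotient there is a single component, and its six boundary curves are identified in opposite pairs by the translations $(1,0)$, $(0,1)$ and $(1,1)$. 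Properness follows because a compact fundamental domain, the union of two triangles, meets only finitely many charts. The action is free and locally finite near $t=0$, which gives Hausdorffness. The distinguished section has valuation $(0,1)$ by Proposition~\ref{init:section}. This is a vertex of the triangulation, so the section lies in that vertex chart and extends.

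\textbf{Cohomology of $W$.} We compute $H^*(W;\Z)$ from the normalization $dP_6\to W$ with Mayer--Vietoris, or equivalently from the spectral sequence of the strata. The strata are one surface, three double curves ($\PP^1$'s, each the image of two opposite boundary curves) and the triple points (two images). The dual complex is the standard triangulation of $T^2$ with one vertex, three edges and two faces. The weight part therefore contributes $H^*(T^2;\Z)$ in degrees $0,1,2$, and the remaining contributions come from $H^2$ of the curves and of $dP_6$. The step to check carefully is the restriction $H^2(dP_6;\Z)\to\bigoplus H^2(\text{curves})$. We expect its image to be the sums constrained by opposite-edge identification, so that the cokernel and kernel give torsion-free groups of ranks $1,2,2,2,1$ in degrees $0,\dots,4$. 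The expected ranks of $H^q(T^4)^T$ for $T$ maximally unipotent in rank two are $1,2,3,2,1$; we will reconcile these ranks with the spectral-sequence count in degree two, where an extra $H^2$ class comes from $\ker$ on the surface.

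\textbf{Specialization.} As in \eqref{fill:rankone-collapse}, we use a Clemens collapse $c_t:\mathscr W_t\to W$. Topologically, $\mathscr W_t\simeq T^2_{\text{base}}\times T^2_{\text{fib}}$, where the vanishing fiber torus $T^2_{\text{fib}}$ collapses over the two triangles and along the edges. The nodal charts $xyz=t$ show that the collapse is one-to-one off the vanishing locus, so $[\mathscr W_t]\mapsto[W]$ with coefficient one. We then choose integral generators $e_1,e_2$ (invariant) and $e_3,e_4$ with $T_i(e_{2+i})=e_{2+i}+e_i$. The plan is to compute $c_t^*$ on cellular generators and identify its image with the invariant subring of $\bigwedge^*\Z\langle e_1,\dots,e_4\rangle$. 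This subring is generated by $e_1$, $e_2$, the degree-two invariants $e_1e_2$, $e_1e_4$ and $e_2e_3$, and so on. Since both sides are torsion-free, it suffices to check equality of ranks and surjectivity onto a basis of invariants.

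\textbf{Main obstacle.} The main obstacle is saturation in degree two: showing that $\specialize^2$ hits the primitive invariant class such as $e_1e_4+e_2e_3$, and not only a multiple of it. We would attack this by identifying that class with the class of a boundary curve, or with the Kähler-type class of $dP_6$ restricted to a triangle, and computing its pairing with a dual invariant $2$-cycle. That pairing should be $\pm1$ by unimodularity of the triangulation.
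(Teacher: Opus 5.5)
\textbf{Verdict: genuine gap.} Your construction of the filling matches the paper's: the diagonal periodic triangulation, one vertex orbit whose star gives $dP_6$, opposite boundary curves glued by $(1,0),(0,1),(1,1)$, properness from a compact fundamental domain, and extension of the section at the vertex $(0,1)$. The gap is in the cohomological half, which carries the content of the statement.

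\textbf{Your rank counts are wrong on both sides.} As set up, the comparison could not succeed.
The restriction $\beta\colon H^2(dP_6;\Z)\to\bigoplus_3 H^2(\PP^1;\Z)$ has rank one. For $L=aH-\sum c_iE_i$, the degrees of $L$ on the two curves of each opposite pair (e.g.\ $E_1$ and $H-E_2-E_3$) differ by the same integer $\sum c_i-a$. So, up to orientations, $\beta(L)=(\sum c_i-a)(1,-1,1)$. This gives $\ker\beta=\Z\langle H-E_1,H-E_2,H-E_3\rangle$ and $\coker\beta\simeq\Z^2$, free because the image is primitive.
Add the row $H^*(T^2;\Z)$, including the class $\tau$ of the dual torus in degree two. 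The spectral sequence degenerates at $E_2$ (only even rows, three columns), so $H^*(W;\Z)$ is free of ranks $(1,2,4,2,1)$, not $(1,2,2,2,1)$.
The invariant ranks are also $(1,2,4,2,1)$, not $(1,2,3,2,1)$. In degree two an invariant basis is $e_1e_2,\ e_1e_3,\ e_2e_4,\ e_1e_4+e_2e_3$. The classes $e_1e_4$ and $e_2e_3$ on your list are not invariant separately, e.g.\ $T(e_1e_4)=e_1e_4+e_1e_2$.

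\textbf{You have no argument for the integral index in degrees one and two.}
Showing that one primitive class is hit cannot determine the index of a map between rank-four lattices. The appeal to unimodularity also fails in degree two: on the invariants modulo their radical $\Z e_1e_2$, in the basis $(e_1e_3,e_2e_4,e_1e_4+e_2e_3)$, the cup form is $\left(\begin{smallmatrix}0&1&0\\1&0&0\\0&0&-2\end{smallmatrix}\right)$, of determinant $2$.
The paper uses unimodularity only in degree one, as follows.
\begin{enumerate}
\item The Clemens collapse composed with $W\to K=\R^2/N$ is homotopic to the logarithmic-modulus map.
\item The phase-lattice map $\{m\in\Z^3:\sum m_i=0\}\to N$ of a triangle is an isomorphism. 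Hence $\specialize^1$ is an integral isomorphism onto $\Z\langle e_1,e_2\rangle$, and $\specialize^2(\tau)=e_1e_2$.
\item $\specialize^4$ sends the positive generator to the top class; this is your coefficient-one observation.
\item Since $\tau\smile H^2(W;\Z)=0$, multiplicativity gives $J^{\mathsf t}G_TJ=G_W$. Here $J$ is the induced map from $H^2(W)/\Z\tau$ to the invariants modulo $\Z e_1e_2$, $G_T$ is the form above, and $G_W=\left(\begin{smallmatrix}0&1&1\\1&0&1\\1&1&0\end{smallmatrix}\right)$ is the Gram matrix of $H-E_1,H-E_2,H-E_3$.
\item Both determinants equal $2$, so $\det J=\pm1$.
\item Degree three follows because products of the degree-one and degree-two images contain $e_1e_2e_3$ and $e_1e_2e_4$.
\end{enumerate}
Without this discriminant comparison, or an explicit computation of all four images $\specialize^2(\tau)$ and $\specialize^2(H-E_i)$, the integral isomorphism remains unproved.
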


\begin{proof}
Triangulate the unit square along its antidiagonal and extend the
triangulation by $N=\Z^2$. Coning at height one gives the periodic
fan of Mumford's construction~\cite{Mumford}. Its cones are
unimodular, so the local charts are $\CC^3$ with map
$t=z_0z_1z_2$. The full period lattice $N$ gives a proper quotient
with smooth total space. There is one orbit of vertices; the fan at
a vertex is the fan of $dP_6$, and the quotient identifies opposite
curves of its boundary hexagon. The distinguished section has
valuation $(0,1)$, a vertex of the triangulation, and hence extends.

We give the integral index check, corresponding to
\cite[Lemma~6.4]{Engel}. The normalization resolution, obtained on
the periodic normal-crossing cover and then descended, has strata
\[
 Y=dP_6,\qquad \coprod_1^3\PP^1,\qquad\{p_+,p_-\}.
\]
Write $H,E_1,E_2,E_3$ for the standard basis on
$Y=\operatorname{Bl}_3\PP^2$. Up to orientations, its nonzero
restriction differentials are
\begin{equation}
 B=\begin{pmatrix}1&-1&1\\-1&1&-1\end{pmatrix},\qquad
 \beta(aH-\textstyle\sum c_iE_i)
       =(\textstyle\sum c_i-a)(1,-1,1).
 \label{fill:a2-differentials}
\end{equation}
Both images are primitive. The normalization spectral sequence is
supported in even $q$ and in columns $0\leq p\leq2$, so it degenerates at
$E_2$. Thus $H^q(W;\Z)$ is free of ranks $(1,2,4,2,1)$, and
\begin{equation}
 0\longrightarrow\Z\tau\longrightarrow H^2(W;\Z)
 \longrightarrow\Z\langle H-E_1,H-E_2,H-E_3\rangle
 \longrightarrow0.
 \label{fill:a2-extension}
\end{equation}
Here $\tau$ comes from the fundamental cohomology class of the
dual torus $K=\R^2/N$.

Choose a cohomology basis $e_1,e_2,f_1,f_2$ of the nearby fiber with
$T(e_i)=e_i$ and $T(f_i)=f_i+e_i$. The Clemens
collapse~\cite{Clemens} induces specialization. Using the
normalization homotopy model of $W$, its composition with the
projection to $K$ is homotopic to the normalized logarithmic-modulus
map. It identifies $H^1(K;\Z)$ with $\Z\langle e_1,e_2\rangle$.
Indeed, in a chart associated with a triangle $(a_0,a_1,a_2)$,
the phase lattice map
\begin{equation}
 \{(m_0,m_1,m_2)\in\Z^3:\textstyle\sum m_i=0\}
 \longrightarrow N,\qquad(m_i)\longmapsto\textstyle\sum m_i a_i
 \label{fill:a2-phase-lattice}
\end{equation}
is an isomorphism by unimodularity. For unit multipliers equal to
one, the projection is given by logarithmic moduli modulo $N$ and
has the positive real torus as a section. The general units deform
to one through their holomorphic logarithms, preserving the fan,
semistable charts, and collapse maps: the real period matrix is
$(\log|t|)I+O(1)$ and stays invertible for small $t$.
Consequently $\specialize^1$ is an integral isomorphism and
$\specialize^2(\tau)=e_1e_2$. The collapse has one
orientation-preserving sheet away from the double curves, so
$\specialize^4$ sends the positive generator to
$\Omega=e_1f_1e_2f_2$.

In degree two the invariant lattice has basis
$e_1e_2,e_1f_1,e_2f_2,e_1f_2+e_2f_1$. The normalization filtration
gives $\tau\smile H^2(W;\Z)=0$, while the radical of the target
pairing is $\Z e_1e_2$. On the quotients by these primitive classes,
the cup-product matrices are
\begin{equation}
 G_W=\begin{pmatrix}0&1&1\\1&0&1\\1&1&0\end{pmatrix},\qquad
 G_T=\begin{pmatrix}0&1&0\\1&0&0\\0&0&-2\end{pmatrix},
 \label{fill:a2-intersection-index}
\end{equation}
in the bases induced by $H-E_i$ and
$(e_1f_1,e_2f_2,e_1f_2+e_2f_1)$, respectively. Since specialization
preserves cup products and the top generator, its quotient matrix
$J$ satisfies $J^{\mathsf t}G_TJ=G_W$. Both determinants are $2$,
so $\det J=\pm1$. Together with
$\specialize^2(\tau)=e_1e_2$, this proves the degree-two assertion.
Products of the degree-one and degree-two images contain
$e_1e_2f_1,e_1e_2f_2$, the invariant basis in degree three. Since
both groups have rank two and are free, specialization is an
isomorphism there as well. Degree zero is immediate.
\end{proof}

\subsection{Base change for the logarithmic transformation}

We next extend the particular torus period at infinity. The
normalization of the distinguished section needs care here: the
natural birational identification of line bundles does not preserve
a regular rigidification.

\begin{proposition}
\label{fill:good}
After the base change $s=r^3$ at infinity, the punctured torus family
extends to a smooth proper family $\mathscr A'\to\Delta_r$ of complex
two-tori. Its central fiber is an abelian surface. The punctured
identification can be chosen to preserve the distinguished section.
With this choice the deck map $r\mapsto\zeta_3^{-1}r$ extends to an
order-three group automorphism $\rho$ of $\mathscr A'$.
\end{proposition}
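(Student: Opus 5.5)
We treat this as a local statement at $\infty$, with $s$ a local parameter there. The $IV^*$ fiber is additive, with local monodromy of order three on $H^1$ of the elliptic fiber. The fiber of the torus family at $t$ is built from $M_t^\times$, a $\CC^*$-extension of $S_t$, modulo $F$. So its monodromy on $H^1(X_t)$ is block triangular. One block is the order-three elliptic monodromy. The other comes from the $\CC^*$ direction and the period generated by $F$, and it is trivial.

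We first base change to a cubic cover. The $IV^*$ fiber has potentially good reduction and becomes smooth after the cubic base change $s=r^3$: the $j$-invariant is $0$ and the valuation of the discriminant is $8$, so $8\cdot 3/12$ is an integer. We take the minimal model $S'\to\Delta_r$ of the base-changed surface. It is smooth with a smooth elliptic central fiber $E_0$, and it maps birationally to the fiber product $S\times_{\Delta_s}\Delta_r$. The sections $O,P$ pull back to sections $O',P'$ of $S'$, and $6P'$ extends as a translation. We then define $M'=\cO_{S'}(P'-O')$ on $S'$. Over the punctured disc, $M'$ agrees with the pullback of $M$ up to a twist by a line bundle pulled back from the base, and any such bundle is trivial on the disc. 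We fix the twist by requiring that the pullback of the rigidification $e$ along $O$ goes to a nowhere-vanishing section of $M'|_{O'}$ near $r=0$. This is the normalization of the distinguished section that the statement refers to. Since $M'$ has degree zero on the fibers, $t_{6P'}^*M'\otimes M'^{-1}$ is trivial along fibers, hence pulled back from $\Delta_r$, hence trivial. After this normalization the lifted contraction $F'$ becomes $\lambda$ times a unit multiplier $u(r)$. Shrinking the disc and using the smallness of $|\lambda|$, we get $0<|\lambda u|<1$. The quotient $\mathscr A'=M'^\times/\langle F'\rangle$ is then a smooth proper family over all of $\Delta_r$, by the same norm argument as in Proposition~\ref{init:open}. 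Each fiber is a $\CC^*/(\lambda u)^{\Z}$-extension of an elliptic curve. The central fiber is therefore a compact two-dimensional commutative group containing an elliptic curve, and so it is an abelian surface, since an extension of an elliptic curve by an elliptic curve is projective.

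Next we construct the automorphism $\rho$. The deck transformation $r\mapsto\zeta_3^{-1}r$ lifts to an automorphism $\sigma$ of $S'$, because the minimal model is unique. This lift preserves $O'$ and $P'$, since both are pulled back from $S$, and it acts on $E_0$ as an order-three automorphism fixing the origin. Since $\sigma$ preserves $O'$ and $P'$, it preserves $M'$ up to isomorphism. We lift $\sigma$ to $M'$ by taking the pullback identification over $\Delta_r^*$ and checking that it extends. It extends because it preserves the normalized frame $e$ along $O'$: the frame came from $S$ and is invariant under the deck group. This lift commutes with $F'$, since both are pullbacks of canonical structures on $S$ away from $r=0$, and by continuity the commutation holds everywhere. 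The lift therefore descends to $\rho$ on $\mathscr A'$. It has order three because $\rho^3$ covers the identity, fixes the section, and equals the identity over $\Delta_r^*$. It is a group automorphism on fibers because it fixes the origin section and maps tori to tori.

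We expect the main obstacle to be the normalization step. The naive birational identification of the pullback of $M$ with $M'$ differs by $\cO(kE_0)$-type twists, because $P$ meets a nonidentity component of $IV^*$, a component of multiplicity one. Under that naive identification $e$ would acquire a zero or pole of order $k$ at $r=0$, and $F$ would acquire a factor $r^{m}$. We would compute $k$ and $m$ from the local height contribution $4/3$ of Lemma~\ref{init:height}: after the cubic base change this gives the integer $4$. We would then twist by $r^{-k}$ and check that the same twist renders the multiplier of $F$ a unit. This works because the twist is multiplication by a function on the base, so it commutes with $F$. That justifies the claim that the chosen identification preserves both the section and the period.
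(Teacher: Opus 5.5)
\textbf{The main gap: why the period stays invertible.} In your second paragraph you pass from ``$\mathcal{H}om(t_{6P'}^*M',M')$ is trivial on $\Delta_r$'' to ``$F'$ becomes $\lambda$ times a unit multiplier''. This step is the crux of the proposition, and it does not follow. Triviality only gives \emph{some} isomorphism $t_{6P'}^*M'\to M'$. The transported period $\Phi$ is a nowhere-zero section on $\Delta_r^*$ that is meromorphic at $0$, so it equals $r^m$ times a unit. If $m\neq0$, the multiplier of $F'$ vanishes or blows up at $r=0$, and there is no smooth family of tori.

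Your last paragraph sees this factor $r^m$, but proposes to remove it with the same twist $r^{-k}$ that normalizes $e$. You also observe, correctly, that this twist is a base scalar commuting with $F$. Commuting means that conjugating by it leaves the multiplier of $F$ unchanged, so it cannot alter $m$. Since the central fiber of $S'$ is irreducible, every vertical twist on $S'$ is such a scalar.

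What is missing is an argument that $m=0$ already for the natural comparison. The paper supplies it as follows.
\begin{itemize}
\item $\Phi_0$ is invertible near $\infty$, because its zero divisor is $S_{\alpha_-}$.
\item On an equivariant common resolution $\overline S\xleftarrow{a}\widetilde S\xrightarrow{b}S'$ one has $a^*\overline M\simeq b^*M'\otimes\cO(D)$ with $D$ vertical.
\item Since $6P$ lies in the identity N\'eron component, the lifted translation fixes every component of $D$. Hence $\widetilde t_{6P}^*D=D$, and $\cO(D)$ is canonically linearized.
\item Cancelling $\cO(D)$ transports $\Phi$ to an isomorphism that descends to $\Phi':t_{6P'}^*M'\xrightarrow{\sim}M'$ over the whole disc.
\end{itemize}
Only after this is the comparison rescaled to fix $e$. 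Commutation with $F$ is exactly why that rescaling does not spoil $\Phi'$. Your predicted exponent is also wrong: $z=-X/Y=r^{-2}z'$ gives $f_r(e)=r^{-2}e'$, so the twist is $r^2$, not $r^4$.

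\textbf{The central fiber.} You claim every fiber is a $\CC^*/(\lambda u)^{\Z}$-extension of an elliptic curve. This is false for $r\neq0$, where $F'$ covers translation by $-6P'(r)\neq O$. At $r=0$ the claim requires $6P'(0)=O$, which you never check.

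More seriously, ``an extension of an elliptic curve by an elliptic curve is projective'' is false. Such extensions of complex tori form an uncountable family, and only the countably many split by an isogeny are algebraic. What makes $\mathscr A'_0$ abelian is torsion. The point $P'(0)=(0,\sqrt2)$ is a flex of $y^2=x^3+2$, so $M'_0=\cO_{E_*}(P'(0)-O)$ has order three and $F'_0$ covers the identity. The degree-three isogeny trivializing $M'_0$ then gives $\mathscr A'_0\simeq(\widetilde E_*\times E_\lambda)/K$.

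Your construction of $\rho$ is fine, provided the lift is transported through the normalized comparison $\widetilde f_r=r^2f_r$. The naive pullback lift moves $e'$ by a nontrivial cube root of unity, so it is not a group automorphism.
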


\begin{proof}
Use $X=s^2x_{\mathrm{old}}$, $Y=s^3y_{\mathrm{old}}$, and then put
$X=r^4x$, $Y=r^6y$. The equation becomes
\begin{equation}
 y^2=x^3-3r(1+r^3)x+2+3r^3+2r^6.
 \label{fill:good-equation}
\end{equation}
Its central fiber $E_*:y^2=x^3+2$ is smooth. Moreover,
$P'(r)=(r^2,\sqrt2)$, and $P'(0)$ is a nonzero flex. Hence
$3P'(0)=O$ and $6P'(0)=O$.

Let $S'\to\Delta_r$ be this smooth model, and $\overline S$ the
normalized base change of the original regular model. Take an
equivariant common resolution
$\overline S\xleftarrow{a}\widetilde S\xrightarrow{b}S'$.
For $M'=\cO_{S'}(P'-O')$ there is a vertical divisor $D$ with
\[
 a^*\overline M\simeq b^*M'\otimes\cO(D).
\]
The comparison is initially the natural one over the punctured disc.
Since ${6P}$ is narrow, it belongs to the connected N\'eron component.
This connected group acts trivially on the finite set of components
above the special fiber. Translation extends to the regular model
\cite{Miranda}, and the normalized graph and a functorial resolution
can be taken equivariantly \cite{BM}. Thus the lifted translation
preserves every component of $D$, and $\widetilde t_{6P}^*D=D$.
Its canonical meromorphic section gives a linearization of $\cO(D)$.

The pulled-back $\Phi$ is invertible on this disc. Cancelling the
$\cO(D)$ factor gives an isomorphism
$\widetilde t_{6P}^*b^*M'\simeq b^*M'$. By the projection formula and
$b_*\cO_{\widetilde S}=\cO_{S'}$, it descends to
\begin{equation}
 \Phi':t_{{6P}'}^*M'\xrightarrow{\sim}M'.
 \label{fill:extended-period}
\end{equation}
Descending the inverse proves that this is an isomorphism. It agrees
with the original period on the punctured disc.

We now normalize the punctured comparison. Since $P$ and $O$ are
disjoint, $M|_O$ is the conormal line of $O$. The parameters at the
zero sections satisfy
\[
 z=-X/Y=r^{-2}z',\qquad z'=-x/y.
\]
Consequently, under the natural comparison $f_r$, the original
rigidifying section has the form $f_r(e)=r^{-2}e'$, after absorbing
a holomorphic unit into a regular nowhere-zero section $e'$ of
$M'|_{O'}$. Replace that comparison by
\begin{equation}
 \widetilde f_r=r^2f_r,\qquad \widetilde f_r(e)=e'.
 \label{fill:normalized-comparison}
\end{equation}
Multiplication by $r^2$ is a scalar from the base. It commutes with
the lift covering $-{6P}'$, so it neither changes $\Phi'$ nor inserts
an additional power of $r$ into the period. It does change the
identification of the distinguished section, which now extends.

Rigidify $M'$ by $e'$ and let $F'$ be the lift defined by $\Phi'$.
It is translation in the resulting semiabelian group. On a smaller
closed disc a Hermitian norm bounds the unscaled linearization;
choosing $|\lambda|$ small makes $F'$ uniformly contracting. Thus
\[
 \mathscr A'=(M')^{\times}/\langle F'\rangle\longrightarrow\Delta_r
\]
is smooth and proper, with compact complex two-torus fibers.
At $r=0$, $M'_0=\cO_{E_*}(P'(0)-O)$ has order three and $F'_0$
covers the identity. The degree-three isogeny trivializing $M'_0$
therefore writes the central fiber as
\[
 \mathscr A'_0\simeq(\widetilde E_*\times E_\lambda)/K,
 \qquad E_\lambda=\CC^*/\langle c_\lambda\rangle,
 \quad0<|c_\lambda|<1,
\]
where the finite group $K$ acts by translations. This is an abelian
surface.

Equation~\eqref{fill:good-equation} has deck map
$\rho_S(r,x,y)=(\zeta_3^{-1}r,\zeta_3x,y)$, preserving $P'-O'$.
Normalize its line-bundle lift to preserve $e'$. Equivalently, on
the punctured disc this lift is
\[
 \rho_r=\widetilde f_{\zeta_3^{-1}r}\,
 \rho_{\mathrm{old},r}\,\widetilde f_r^{-1}.
\]
The two descriptions agree because a line-bundle automorphism is
determined by its scalar along the zero section. The normalized lift
has cube one, commutes with $F'$, and descends to $\mathscr A'$.
All these equalities hold first on the punctured disc and then
everywhere by analytic continuation. Since it preserves the section,
it acts as a group automorphism. This proves the proposition.
\end{proof}

\subsection{The logarithmic transformation}

Let $\Lambda=H_1(\mathscr A'_0;\Z)$, and write $A=\rho_*$.
It is the homological monodromy for the positive $s$-meridian:
the lift of that meridian ends at $\zeta_3r$, which $\rho$ identifies
back with $r$. The marking constructed in Section~\ref{mon:section}
will give an invariant vector and an invariant covector satisfying
\begin{equation}
 \ell\in\Lambda^A,\qquad \psi\in(\Lambda^\vee)^A,
 \qquad\psi(\ell)=1.
 \label{fill:twist-data}
\end{equation}
Concretely, in the basis $V=\Z\langle\psi,u,w,\delta\rangle$,
we will take $\ell=\widehat\psi-2\widehat u-4\widehat w$;
its invariance is checked from \eqref{mon:matrices}.

\begin{proposition}
\label{fill:log}
Given \eqref{fill:twist-data}, the automorphism
$\widetilde\rho=\operatorname{trans}_{\ell/3}\rho$ acts freely on
$\mathscr A'$. Its quotient is a proper filling of the original
punctured family with smooth total space and central fiber $3B$,
where $B=\mathscr A'_0/\langle\widetilde\rho\rangle$ is bielliptic.
Under parallel transport in the good-reduction family, specialization
from $B$ is the pullback of the covering $p:\mathscr A'_0\to B$.
\end{proposition}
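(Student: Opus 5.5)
The plan follows the standard logarithmic-transformation argument on the smooth model $\mathscr A'\to\Delta_r$ of Proposition~\ref{fill:good}. The order-three group automorphism $\rho$ covers $r\mapsto\zeta_3^{-1}r$. The translation $\operatorname{trans}_{\ell/3}$ is translation by the section $r\mapsto \ell/3\in\mathscr A'_r$, the three-torsion point obtained by parallel transport of $\ell\in\Lambda=H_1(\mathscr A'_0;\Z)$, divided by three in the universal cover of each fiber. First I check that $\widetilde\rho$ is well defined and has order three. Since $\ell\in\Lambda^A$, the section $\ell/3$ is $\rho$-invariant modulo $\Lambda$. Writing $\widetilde\rho=\tau\circ\rho$ with $\tau$ the translation, we get
\[
\widetilde\rho^{\,3}=\operatorname{trans}_{(1+A+A^2)\ell/3}\,\rho^3=\operatorname{trans}_{\ell},
\]
which is the identity on each torus. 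Hence $\widetilde\rho$ generates a $\Z/3$-action on $\mathscr A'$ covering the deck group of $s=r^3$.

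Next comes freeness. Off $r=0$ the action on the base is free, so only the central fiber matters. There $\widetilde\rho_0(x)=\rho_0(x)+\ell/3$, and a fixed point would satisfy $(1-A)x=\ell/3$ in the universal cover modulo $\Lambda$. Applying the invariant covector $\psi$ kills $(1-A)x$, since $\psi\circ A=\psi$ extends $\R$-linearly to $\Lambda\otimes\R$. It would force $\psi(\ell)/3=1/3\in\Z$, which is false. The same computation applies to $\widetilde\rho_0^{\,2}$, which translates by $(1+A)\ell/3$, where $\psi$ takes the value $2/3$. So the action is free, and the quotient $\mathscr A'/\langle\widetilde\rho\rangle$ is a complex manifold. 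It is proper over $\Delta_s$ because $\mathscr A'\to\Delta_r$ is proper and $\Delta_r\to\Delta_s$ is finite.

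I then identify the punctured part with the original family. Over $r\neq0$, the quotient by $\widetilde\rho$ coincides with the quotient by $\rho$ up to the fiberwise translation $\operatorname{trans}_{\ell/3}$. By Proposition~\ref{fill:good}, $\rho$ is the deck transformation under the normalized comparison $\widetilde f_r$, so $\mathscr A'|_{\Delta_r^*}/\langle\rho\rangle$ recovers the original punctured family at infinity. Composing with the translation is a fiberwise isomorphism of torsors, though not of groups. Because $\widetilde\rho$ is a translation of $\rho$, the quotient $\mathscr A'|_{\Delta_r^*}/\langle\widetilde\rho\rangle$ is isomorphic to the original family as a family of complex manifolds. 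This is the standard fact that logarithmic transformations only change the filling.

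Finally, the central fiber. The function $s=r^3$ pulls back to $r^3$, and $r$ descends only as a multivalued function, so the reduced central fiber $B=\mathscr A'_0/\langle\widetilde\rho_0\rangle$ has multiplicity three. The group $\mathbb Z/3$ acts on the abelian surface $\mathscr A'_0\simeq(\widetilde E_*\times E_\lambda)/K$ freely, by a non-translation composed with a translation. It acts on $E_*$ by $x\mapsto\zeta_3x$ and trivially on the $E_\lambda$-direction, because $\rho$ fixes the $\CC^*$ rigidification. The quotient therefore has an elliptic fibration to $E_\lambda/(\ell/3)$ with fibers $\widetilde E_*$ and an isotrivial structure, so $B$ is bielliptic rather than an abelian surface. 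For the specialization statement, I take the retraction of the tubular neighborhood of $B$ in the quotient to be the image of the trivial retraction $\mathscr A'\to\mathscr A'_0$ given by parallel transport. The collapse $X_s\to B$ is then the composition of the parallel-transport identification $\mathscr A'_r\cong\mathscr A'_0$ with $p$, so specialization equals $p^*$.

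The main obstacle is the freeness and non-translation check on the central fiber. It requires that the marking actually has $\psi(\ell)=1$ with $\psi$ invariant. Here I rely on \eqref{fill:twist-data} and the invariance check from \eqref{mon:matrices}.
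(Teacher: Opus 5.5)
Your outline follows the paper's proof step for step, and those parts are fine:
\begin{itemize}
\item order three from $A\ell=\ell$;
\item freeness by applying the invariant covector $\psi$;
\item finite descent for smoothness, properness and multiplicity three;
\item the bielliptic quotient;
\item specialization via a descended retraction.
\end{itemize}
The gap is the identification of the punctured quotient with the original family. The fact that $\widetilde\rho$ differs from $\rho$ by a translation does not by itself make the two quotients over $\Delta_s^*$ isomorphic. Also, ``composing with the translation'' does not give a map between them. To intertwine the two actions you need translations $h_r=\operatorname{trans}_{c(r)}$ with $h_{\zeta_3^{-1}r}\widetilde\rho_r h_r^{-1}=\rho_r$, that is, $c(\zeta_3^{-1}r)-Ac(r)\equiv-\ell/3$ modulo periods. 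If $c$ extended continuously over $r=0$, evaluating there would give $(I-A)c(0)\equiv-\ell/3$ modulo $\Lambda$, which your own freeness computation rules out. So the identification is necessarily singular at the central fiber, and producing it is the actual content of the logarithmic transformation.

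The missing idea is the paper's choice $c(r)=\sigma(r)=\frac{\log r}{2\pi i}\ell$, with $\ell$ regarded as a holomorphic section of the relative Lie algebra.
\begin{itemize}
\item Because $\ell$ is an integral period, changing the branch of $\log r$ changes $\sigma$ by a period, so $\operatorname{trans}_{\sigma(r)}$ is single-valued on $\Delta_r^*$.
\item Because $A\ell=\ell$, one gets $\sigma(\zeta_3^{-1}r)=A\sigma(r)-\ell/3$, which yields the conjugacy $h_{\zeta_3^{-1}r}\widetilde\rho_r h_r^{-1}=\rho_r$.
\item Composing with the normalized comparison \eqref{fill:normalized-comparison} then identifies the quotient with the original punctured family.
\end{itemize}
You could instead argue abstractly. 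The $\widetilde\rho$-quotient over $\Delta_s^*$ is a torsor under the $\rho$-quotient, and such torsors are trivial because coherent $H^1$ on the Stein curve $\Delta_s^*$ vanishes, as does $H^2$ of $\Delta_s^*$ with coefficients in the period local system. But that gives no control over which identification is used. The paper later needs the clutching translation to be exactly $\sigma$, for the relation $m_*^3=-\ell$ in Lemma~\ref{fill:boundary} and for the transgression formula in Lemma~\ref{hom:clutch}.

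A smaller point: the parallel-transport retraction $\mathscr A'\to\mathscr A'_0$ descends to a retraction onto $B$ only if it is $\widetilde\rho$-equivariant. You should arrange this by averaging an Ehresmann connection over $\Z/3$, as the paper does.
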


\begin{proof}
The invariant class $\ell\bmod3$ extends uniquely over the disc as
a holomorphic section of the finite covering $\mathscr A'[3]$.
Since $A\ell=\ell$,
$\widetilde\rho^3=\operatorname{trans}_{(I+A+A^2)\ell/3}=1$.
On the central torus a fixed point of the $k$th power, $k=1,2$,
would give $(A^k-I)z+k\ell/3\in\Lambda$ on its universal cover.
Applying $\psi$ gives $k/3\in\Z$, a contradiction. Away from the
central fiber the base action already has no fixed points.

To identify the punctured families, regard an integral period as a
holomorphic section of the relative Lie algebra, and set
\[
 \sigma(r)=\frac{\log r}{2\pi i}\ell.
\]
A change of logarithm adds an integral period, so this is a
single-valued torus translation. It satisfies
$\sigma(\zeta_3^{-1}r)=A\sigma(r)-\ell/3$. Hence, writing
$h_r=\operatorname{trans}_{\sigma(r)}$,
\begin{equation}
 h_{\zeta_3^{-1}r}\widetilde\rho_rh_r^{-1}=\rho_r.
 \label{fill:log-conjugacy}
\end{equation}
Compose this conjugacy with \eqref{fill:normalized-comparison}.
This identifies the quotient with the original punctured family.
In these group coordinates the only
translation in the clutching map of the logarithmic transformation is $\sigma$.

Finite descent gives smoothness and properness. The reduced central
fiber is a free affine quotient of an abelian surface, with the
order-three elliptic linear action, hence is bielliptic. Since the
quotient is unramified and $s=r^3$, its multiplicity is three.
Finally average an Ehresmann connection over the finite group.
The equivariant radial retraction descends to a retraction onto $B$.
On a nearby fiber identified with $\mathscr A'_0$, it is exactly
$p$, so the actual specialization map is $p^*$ in every degree.
\end{proof}

\begin{lemma}
\label{fill:boundary}
The boundary inclusion of each of the four local fillings is
surjective on fundamental groups. At a multiplicative filling it
kills the lifted base meridian determined by the distinguished
section and the primitive vanishing lattice
$\im(\widehat T_i-I)\subset\Lambda$, where
$\widehat T_i=(T_i^{-1})^{\mathsf t}$.
For the logarithmic transformation, the local fundamental group is
\begin{equation}
 \left\langle\Lambda,m_*\ \middle|
 [\Lambda,\Lambda]=1,\quad
 m_*\eta m_*^{-1}=A^{-1}\eta,\quad m_*^3=-\ell\right\rangle,
 \label{fill:log-group}
\end{equation}
where $m_*$ is the positive geometric meridian lifted using the
external distinguished section.
\end{lemma}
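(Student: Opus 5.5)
Each filling deformation retracts onto its central fiber, and the boundary is a punctured-disc torus bundle. The plan is therefore to compute $\pi_1$ of the filling from the retraction and compare it with $\pi_1$ of the boundary. The boundary group is $\Lambda\rtimes\Z$, generated by $\Lambda=\pi_1(X_t)$ and a lifted meridian; the distinguished section of Proposition~\ref{init:section} gives the splitting.

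For the multiplicative fillings, I will use the Clemens collapse. In the rank-one case, diagram~\eqref{fill:rankone-collapse} identifies the collapse $X_t\to W$ with $\mathrm{id}_E\times c$ up to homotopy. This map pinches one primitive circle, so it is surjective on $\pi_1$ and its kernel is generated by that circle. Dually, the pinched circle is the primitive vanishing cycle $\im(\widehat T_i-I)$. The extended section is a disc in the filling bounding the lifted meridian, so the meridian dies. Since $\pi_1(W)=\pi_1(E\times(S^2\vee S^1))$ is generated by the image of the fiber, surjectivity follows.

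The rank-two case at $\alpha_-$ runs the same way. $W$ is $dP_6$ with opposite boundary curves glued, so $\pi_1(W)\simeq\pi_1(K)=N$ by van Kampen, since $dP_6$ is simply connected. The collapse composed with projection to $K$ is the log-modulus map, and by the proof of Proposition~\ref{fill:a2} it is an isomorphism on $H_1$ onto $N$. Its kernel on $\Lambda$ is therefore the phase lattice, which equals $\im(\widehat T-I)$ and is primitive because $T-I$ has elementary divisors one. The meridian again bounds the extended section disc.

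For the logarithmic transformation, the filling is $\mathscr A'/\langle\widetilde\rho\rangle$, and $\mathscr A'$ retracts onto $\mathscr A'_0$. Its fundamental group is therefore the extension of $\Z/3$ by $\Lambda$ given by the orbifold group of $\widetilde\rho$ acting freely on $\mathscr A'_0$. This is the group of affine lifts generated by $\Lambda$ and the lift $z\mapsto Az+\ell/3$. Calling that lift $\mu$, its cube is translation by $(I+A+A^2)\ell/3=\ell$, and conjugation by $\mu$ acts on $\Lambda$ by $A$.

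I then identify $m_*$ using the conjugacy~\eqref{fill:log-conjugacy} together with the section-normalized comparison~\eqref{fill:normalized-comparison}. The positive meridian lifts to a third of the loop in $r$, which ends via $\rho$; under $h$ it becomes $\widetilde\rho$. The external section goes to a path whose translation part is $\sigma$, which accounts for the shift by $\ell/3$. Orientation conventions convert $\mu$ to $m_*=\mu^{-1}$ or similar, giving $m_*\eta m_*^{-1}=A^{-1}\eta$ and $m_*^3=-\ell$. The boundary group $\Lambda\rtimes\langle m\rangle$ maps onto this extension with $m\mapsto m_*$, so the inclusion is surjective.

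The main obstacle is this sign and orientation bookkeeping, in particular checking that the external section produces exactly $-\ell$ rather than $+\ell$ or $\pm2\ell$. I will fix it by tracking $\sigma(\zeta_3^{-1}r)=A\sigma(r)-\ell/3$ along one third of the circle.
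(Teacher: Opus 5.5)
Your rank-one argument is the paper's. For the logarithmic transformation you also follow the paper's route: the Bieberbach group of $B$, an affine lift $b$ of $\widetilde\rho$ with $b\eta b^{-1}=A\eta$ and $b^3=\ell$, then locating the section-lifted meridian.

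At $\alpha_-$ you argue differently. The paper stays in a chart $t=z_0z_1z_2$. It kills the vanishing loops with explicit discs obtained by scaling all three coordinates. It gets surjectivity because the dense torus surjects on $\pi_1$ of a toric chart, and the periodic quotient adds only period loops. You instead compute $\pi_1(W)\cong\pi_1(K)=N$, and use that the collapse followed by $W\to K$ is the logarithmic-modulus map. This gives surjectivity and even the exact kernel, which is more than Lemma~\ref{pi:coinvariants} needs (it only uses that these relations are imposed). Your route is valid. The isomorphism $\pi_1(W)\cong N$, which the paper never needs, requires more than $\pi_1(dP_6)=1$: the double curves must be $\PP^1$'s, and the boundary hexagon must map to the commutator in the free group of the theta graph of double curves.

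The genuine gap is the logarithmic step, which you leave as ``$m_*=\mu^{-1}$ or similar.'' This is content, not sign bookkeeping. Every lift of the positive meridian lies in the coset $b^{-1}\Lambda$, and $(b^{-1}\lambda)^3=-\ell+(I+A+A^2)\lambda$. In the global quotient $\Z[\widehat\psi]$ of Lemma~\ref{pi:coinvariants}, $A$ acts trivially, so $(I+A+A^2)\lambda$ becomes $3k\widehat\psi$ with $k=[\lambda]$. A wrong element of the coset would therefore replace $\widehat\psi=0$ by $(1-3k)\widehat\psi=0$, leaving for instance $\Z/2$ when $k=1$.

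The section pins the element down, and the computation is short.
\begin{itemize}
\item By \eqref{fill:log-conjugacy}, the external zero section is $h_r^{-1}(0)=-\sigma(r)$ on the twisted cover, not $+\sigma$ as you wrote. Only $-\sigma$ is $\widetilde\rho$-equivariant: $\widetilde\rho_r(-\sigma(r))=-\sigma(r)+\ell/3=-\sigma(\zeta_3^{-1}r)$.
\item The positive $s$-meridian lifts to the third-turn $r_0\to\zeta_3r_0$. Along it the continuous branch of $\log r$ gains $2\pi i/3$.
\item Hence the lifted section, in the relative Lie algebra, ends at $-\sigma(r_0)-\ell/3=b^{-1}(-\sigma(r_0))$, where $b(x)=Ax+\ell/3$.
\end{itemize}
So the deck transformation carrying the start of the lift to its end is exactly $b^{-1}$. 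Thus $m_*=b^{-1}$, which gives both $m_*\eta m_*^{-1}=A^{-1}\eta$ and $m_*^3=-\ell$.
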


\begin{proof}
For the rank-one Mumford construction, the local total space retracts
onto $W$, which is homotopy equivalent to $E\times(S^2\vee S^1)$ by
\eqref{fill:rankone-collapse}. The nearby fiber supplies its three
fundamental-group generators, and the pinched primitive circle dies.
The extended section supplies a disc bounding the lifted meridian.

For the rank-two Mumford construction, use a chart $t=z_0z_1z_2$. Its vanishing
lattice is
$\{(m_i)\in\Z^3:\sum m_i=0\}$, with basis
$(1,-1,0),(0,1,-1)$. The corresponding loops, for example
$(\epsilon e^{i\theta},\epsilon e^{-i\theta},c)$, bound discs obtained
by multiplying all three coordinates by $a\in[0,1]$. These discs
remain in the total space over the disc. The integral phase-lattice
identification in the proof of Proposition~\ref{fill:a2} identifies
these circles with the full primitive image of $\widehat T_i-I$.
Inclusion of the dense torus into a toric chart is surjective on
$\pi_1$: the toric divisors impose their ray relations, and higher
codimension strata add no generators. Taking the periodic quotient
adds only the period loops, already present in the boundary. This
proves boundary surjectivity; the section again kills the meridian.

For the logarithmic transformation, Proposition~\ref{fill:log} identifies
the fundamental group with the Bieberbach group of $B$.
An affine lift $b$ of $\widetilde\rho$ has relations
$b\eta b^{-1}=A\eta$ and $b^3=\ell$. Under
\eqref{fill:log-conjugacy}, the external zero section is represented
on the twisted cover by $-\sigma(r)$. Along a positive base loop it
ends at $\widetilde\rho^{-1}(-\sigma(r))$. Therefore $m_*=b^{-1}$,
giving \eqref{fill:log-group}. The fiber loops and this meridian
generate the group, proving the remaining surjectivity assertion.
\end{proof}

Choose pairwise disjoint discs about the four special values and
smaller concentric closed discs. Glue the local fillings to
$X_U(\lambda)$ over the resulting annuli, using the identifications
just constructed. There are no triple overlaps among the local
pieces. The local maps are proper and separated and the transition
maps are biholomorphic, so analytic descent gives a proper map with
smooth total space
\[
 f:X_2(\lambda)\longrightarrow\PP^1.
\]
Thus $X_2(\lambda)$ is a compact complex threefold. Its topology will
be computed from the integral monodromy and the local maps above.

\section{Main result}\label{main:section}

We now specify the logarithmic transformation. Fix a base point of
\(U\), and let \(V\) and \(\Lambda=V^\vee\) be the first cohomology
and homology lattices of the torus fiber. In the integral basis
\((\psi,u,w,\delta)\) of Section~\ref{mon:section}, write
\((\widehat\psi,\widehat u,\widehat w,\widehat\delta)\) for the
dual basis and put
\begin{equation}
 \ell=\widehat\psi-2\widehat u-4\widehat w.
 \label{main:twist}
\end{equation}
The matrices in \eqref{mon:matrices} give
\[
 \widehat T_*\ell=\ell,\qquad \psi(\ell)=1,
 \qquad \widehat T_*=(T_*^{-1})^{\mathsf t}.
\]
Thus the construction of Section~\ref{init:initial}, with translation
\(\ell/3\) on the family of Proposition~\ref{fill:good}, defines a threefold
\(X=X_2(\lambda)\) over \(\PP^1\).

\begin{theorem}\label{main:theorem}
For every sufficiently small nonzero \(\lambda\), the threefold
\(X_2(\lambda)\) is compact, smooth, and simply connected, and
\begin{equation}
 H_k(X_2(\lambda);\Z)\simeq
 \begin{cases}
 \Z,&k=0,6,\\
 \Z/2,&k=2,3,\\
 0,&k=1,4,5.
 \end{cases}
 \label{main:homology}
\end{equation}
In particular, it is a complex rational homology six-sphere.
\end{theorem}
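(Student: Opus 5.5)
Compactness and smoothness are already settled by the gluing at the end of Section~\ref{init:initial}: the local fillings of Propositions~\ref{fill:rankone}, \ref{fill:a2} and \ref{fill:log} have smooth total spaces, and analytic descent gives the proper map $f:X_2(\lambda)\to\PP^1$. What remains is to show that $X$ is simply connected and to compute its homology. For both, I plan to decompose $X$ as $X_U$ together with four tubular neighbourhoods $N_i=f^{-1}(\Delta_i)$ and apply Seifert--van Kampen and Mayer--Vietoris. The gluing regions $f^{-1}(\partial\Delta_i)$ are torus bundles over circles.

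\textbf{Fundamental group.} The distinguished section of Proposition~\ref{init:section} splits $\pi_1(X_U)\to\pi_1(U)$, so
\[
\pi_1(X_U)=\Lambda\rtimes\pi_1(U),
\]
with $\pi_1(U)$ free on the meridians $m_\infty,m_0,m_+,m_-$ subject to $m_\infty m_0m_+m_-=1$. Lemma~\ref{fill:boundary} lets me replace each $N_i$ by a quotient of the boundary group:
\begin{itemize}
\item at each of $0$, $\alpha_\pm$, the filling kills the section-lifted meridian and the vanishing lattice $\im(\widehat T_i-I)$;
\item at $\infty$, the filling imposes $m_*^3=-\ell$.
\end{itemize}
Since the finite meridians die, $m_\infty$ becomes trivial, so $m_*=1$ and hence $\ell=0$. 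The group then reduces to
\[
\Lambda\big/\bigl(\textstyle\sum_{i\neq\infty}\im(\widehat T_i-I)+\Z\ell\bigr).
\]
Triviality of this quotient is a finite integral rank check, to be done with the explicit matrices \eqref{mon:matrices}.

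\textbf{Homology.} Given simple connectivity, Poincaré duality and universal coefficients reduce the problem to showing $H_2\simeq\Z/2$ and $H_3\simeq\Z/2$. With $H_1=0$, universal coefficients gives $H^2$ free and $\mathrm{Tor}\,H^3=\mathrm{Tor}\,H_2$. Duality then yields $H_4=H^2$ and $H_3\simeq H^3$. I would first compute $\chi(X)$ from the singular fibres: the torus fibres contribute $0$, $\chi(W)=0$ in both Mumford cases, and $\chi(B)=0$, so $\chi(X)=0$. This forces $b_2=b_3$ free ranks to agree.

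The real computation I plan to do with the Leray spectral sequence of $f$. The sheaves $R^qf_*\Z$ are determined by:
\begin{itemize}
\item the monodromy on $U$;
\item the stalks at $0,\alpha_\pm$, which equal $H^q(X_t)^{T}$ by Propositions~\ref{fill:rankone} and \ref{fill:a2} (specialization is an integral isomorphism onto invariants);
\item the stalk at $\infty$, which is $H^q(B)$ mapped by $p^*$ into invariants, via Proposition~\ref{fill:log}.
\end{itemize}
Because these stalks are full invariants at the three multiplicative points, $R^qf_*\Z=j_*j^*$ there. The row $q=0$ gives $H^*(\PP^1)$, and $H^0(\PP^1,R^1f_*)=V^{\mathrm{glob.inv}}=0$.

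The key groups are $H^1(\PP^1,R^1)$ and the cokernel of $p^*$ at $\infty$. The cokernel of $H^1(B)\to H^1(\mathscr A'_0)^A$ is detected by $\psi$ pairing with $\ell$ and the factor $3$; combined with the translation class $\ell/3$, it becomes the transgression $d_2:E_2^{0,1}\to E_2^{2,0}$ and its $q=2,3$ analogues.

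I expect the main obstacle to be pinning down the integral differentials $d_2$. Following the introduction, each of the four singular points contributes a transgression coefficient, and these should come out as $(1,2,2,1)$. The plan for computing them:
\begin{itemize}
\item For each multiplicative point, evaluate the class of the section-lifted meridian against $\psi$.
\item For the logarithmic point, use the relation $m_*^3=-\ell$ together with $\psi(\ell)=1$.
\item Compute $\gcd$ / Smith normal form of the resulting map, taking care with the sign conventions for $\widehat T$.
\end{itemize}
This should leave a $\Z/2$ cokernel in $E_\infty^{2,1}$ and, dually, $E_\infty^{1,2}$. Finally, I will check that the extension problems in total degrees $3$ and $4$ are trivial, using the ring structure and Poincaré duality.
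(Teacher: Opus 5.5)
Your fundamental-group argument is the paper's: the Mumford fillings kill $m_2,m_+,m_-$, the sphere relation kills $m_*$, the relation $m_*^3=-\ell$ kills $\ell$, and $\Lambda/\sum_{i\neq *}\im(\widehat T_i-I)=\Z[\widehat\psi]$ with $[\ell]=[\widehat\psi]$. The homology part, however, contains false claims.

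\emph{First}, $H^0(\PP^1,R^1f_*\Z)$ is not zero. Every matrix in \eqref{mon:matrices} has first column $(1,0,0,0)^{\mathsf t}$, so $\psi$ is invariant under all four monodromies. Specialization at the multiple fiber has index three in degree one: a covector descends to $B$ only if its value on $\ell$ is divisible by $3$. Hence $E_2^{0,1}=3\Z\psi$. This class is indispensable, because $d_2(3\psi)=\omega$ is what kills $E_2^{2,0}=\Z\omega$. With your vanishing claim, $\Z\omega$ would survive and $H^2(X;\Z)\neq0$. The claim also contradicts your later use of $d_2:E_2^{0,1}\to E_2^{2,0}$.

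\emph{Second}, the Euler characteristic step is wrong twice. The rank-two Mumford fiber has Betti numbers $(1,2,4,2,1)$, so $\chi(W)=2$ and $\chi(X)=2$. Moreover, with $b_1=b_5=0$ and $b_4=b_2$ one has $\chi(X)=2+2b_2-b_3$, which never forces $b_2=b_3$. Your value $\chi=0$ would give $b_3\geq 2$, contradicting the theorem.

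\emph{Third}, the two copies of $\Z/2$ must lie in total degrees $3$ and $4$, since $H^3\simeq H^4\simeq\Z/2$. So they sit in $E_\infty^{2,1}$ and $E_\infty^{2,2}$. The term $E^{1,2}$ has total degree $3$, and in fact $E_2^{1,q}=0$ for all $q$.

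The missing idea is the mechanism behind the differentials. The coefficients $(1,2,2,1)$ are not one contribution per singular point. They are the four differentials $d_2:E_2^{0,q}\to E_2^{2,q-1}$ for $q=1,2,3,4$.

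The Mumford identifications preserve the extended section, so they contribute no translation cocycle. The only translation in the clutching data is $\sigma(r)=\frac{\log r}{2\pi i}\ell$ at infinity, which changes by $\ell/3$ around a positive $s$-loop. The prism homotopy for a translation induces contraction, giving $d_2(a)=\frac{\omega}{3}[\iota_\ell a]$ in the coinvariants.

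Evaluating this integrally needs three inputs absent from your plan:
\begin{enumerate}
\item The integral vanishing $H^1(\PP^1,j_*\bigwedge^qV)=0$ for $q=1,2,3$. This requires a torsion check (unit maximal minors of the stacked matrices), not just a rank count.
\item The coinvariants $E_2^{2,q}\simeq\Z$, together with the relation $[u\wedge w]=6[\psi\wedge\delta]$ coming from $T_2$.
\item The specialization indices $3,1,1,3$ at the bielliptic fiber, which make $3\psi,\xi,\phi,3\nu$ the generators of $E_2^{0,q}$.
\end{enumerate}
Then $\iota_\ell\xi=4u-2w+6\delta\mapsto 6[\delta]$ and $\iota_\ell\phi=u\wedge w+2\psi\wedge w-4\psi\wedge u\mapsto 6[\psi\wedge\delta]$ produce the two coefficients $2$, while $3\psi$ and $3\nu$ produce coefficient $1$.

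Your plan of pairing section-lifted meridians with $\psi$ at the multiplicative points and taking a Smith normal form sees only the degree-one transgression. It cannot yield the degree-two and degree-three coefficients, and those are the source of all the torsion.
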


The local constructions give smoothness and compactness. The
integral monodromy calculation follows in Section~\ref{mon:section};
Section~\ref{pi:section} proves simple connectivity, and
Section~\ref{hom:section} computes the integral homology. The
coefficient two comes from the interaction between the order-three
logarithmic transformation and the level-six Jacobi lattice. All
specialization indices are retained in the calculation.

\section{The monodromy representation}
\label{mon:section}

We determine the integral monodromy of the family $X_U\to U$.
The calculation has three ingredients: the marked elliptic
factorization, the integral normal function of $P$, and the central
extension supplied by the linearization $\Phi=\lambda\Phi_0$.
All matrices act on column vectors, and all meridians are positive.

These computations are necessary for later steps, computing the fundamental group and integer homology of \(X_2(\lambda)\).

\subsection{The elliptic factorization}

For primitive $v=(r,s)^{\mathsf t}\in\Z^2$, put
$v\cdot w=\det(v,w)$.  Our positive twist convention is
\begin{equation}
 \tau_v(w)=w+(v\cdot w)v,\qquad
 \tau_v=\begin{pmatrix}1-rs&r^2\\-s^2&1+rs\end{pmatrix}.
 \label{mon:twist}
\end{equation}
In particular, $\tau_{-v}=\tau_v$.

\begin{lemma}
\label{mon:factorization}
If $a,b,c\in\Z^2$ are primitive and
$\operatorname{tr}(\tau_a^2\tau_b\tau_c)=-1$, then, up to
simultaneous $SL_2(\Z)$-conjugation and Hurwitz moves of the three
blocks, the factorization is
\begin{equation}
 \tau_{(1,0)}^2\tau_{(1,-1)}\tau_{(1,0)}.
 \label{mon:standard-factorization}
\end{equation}
\end{lemma}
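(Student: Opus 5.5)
We prove the lemma by turning the trace condition into a Diophantine equation in the three pairwise determinants. Hurwitz moves then reduce any solution to a short list of minimal ones, and conjugation rebuilds the vectors from their determinants.

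\textbf{Step 1: the trace as a polynomial in determinants.} Write $N_v=v\,v^{\mathsf t}J$, where $J$ is chosen so that $v^{\mathsf t}Jw=v\cdot w$. Then \eqref{mon:twist} reads $\tau_v=I+N_v$, and $\tau_v^2=I+2N_v$. Each $N_v$ has trace $v\cdot v=0$. Expanding $(I+2N_a)(I+N_b)(I+N_c)$ and using cyclicity of the trace for the quadratic and cubic terms, put
\[
 x=a\cdot b,\qquad y=a\cdot c,\qquad z=b\cdot c .
\]
We expect to obtain
\[
 \operatorname{tr}(\tau_a^2\tau_b\tau_c)=2-2x^2-2y^2-z^2-2xyz,
\]
up to the sign of the cubic term, which is fixed by checking the orientation of $c\cdot a$. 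The hypothesis then becomes
\[
 2x^2+2y^2+z^2+2xyz=3 .
\]
The target factorization \eqref{mon:standard-factorization} has $(x,y,z)=(-1,0,1)$ and satisfies this equation. This is a consistency check on the signs.

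\textbf{Step 2: descent by Hurwitz moves.} Hurwitz moves of the three blocks act on the triple $(a,b,c)$, with $\tau_a^2$ moving as a single block. For example, $(b,c)\mapsto(\tau_b c,\,b)$ or $(c,\tau_c^{-1}b)$, and similarly with the block $\tau_a^2$. These moves preserve the product, hence the trace equation. On $(x,y,z)$ they act as Vieta-type involutions of the quadratic in one variable. For instance, with $x,y$ fixed, $z\mapsto -2xy-z$. I would show that if $|x|+|y|+|z|$ is not minimal, one of these involutions strictly decreases it, exactly as in the Markov-equation descent.

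The minimal solutions are then bounded and can be listed. They are the triples with some coordinate zero, for example $(\pm1,0,\pm1)$ and $(0,\pm1,\pm1)$, and possibly a few with all coordinates nonzero, which I would rule out or reduce by hand. A further Hurwitz move should carry the orbit with $x=0$ to one with $y=0$.

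\textbf{Step 3: reconstruction.} Once $y=0$, the vector $c$ equals $\pm a$. Conjugate so that $a=(1,0)$; signs are harmless because $\tau_{-v}=\tau_v$. The condition $a\cdot b=\pm1$ forces the second coordinate of $b$ to be $\mp1$. Conjugation by the stabilizer $\pm\begin{pmatrix}1&n\\0&1\end{pmatrix}$ of $a$ shears $b$ to $\pm(1,-1)$, which gives \eqref{mon:standard-factorization}.

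The main obstacle is Step 2. One must check that the descent really terminates for this weighted equation, where the coefficients $2,2,1$ break the symmetry of the Markov equation. One must also check that the required Vieta involutions are realized by genuine Hurwitz moves, in particular those moving the squared block. I would first try to verify directly which involutions each elementary braid induces on $(x,y,z)$, and then run the descent using only those.
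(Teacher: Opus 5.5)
Your Step~1 is right, including the sign you were unsure of. Since $\operatorname{tr}(N_aN_bN_c)=(a\cdot b)(b\cdot c)(c\cdot a)=-xyz$, the condition is exactly $2x^2+2y^2+z^2+2xyz=3$. This is the paper's equation after relabelling, since the paper uses $a\cdot b$, $b\cdot c$, $c\cdot a$. Step~3 is also fine and matches the paper's treatment of the case $c=\pm a$.

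\textbf{The gap is Step~2, and it carries the whole content of the lemma.} You assert that a Markov-type descent terminates, but you give no argument, and the analogy with the Markov equation does not supply one. What must be proved is an arithmetic statement: every integer solution has all entries in $\{-1,0,1\}$.

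The easy cases go by sign and parity. Flipping $a$, $b$ or $c$ preserves $xyz$. If all entries are nonzero and $xyz>0$, the left side is at least $7$. The case $z=0$ gives $2x^2+2y^2=3$, which is impossible. The cases $x=0$ or $y=0$ force the other two entries to be $\pm1$.

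For all-nonzero solutions, put $z=-w$ and normalize to $x\ge y\ge1$, $w\ge1$, so that $2x^2+2y^2+w^2-2xyw=3$. You then need actual estimates, as in the paper:
\begin{enumerate}
\item If $y=1$, the equation becomes $x^2+(w-x)^2=1$, hence $(1,1,1)$.
\item If $y\ge2$, then $x=y$ is impossible, because $(w-x^2)^2=(x^2-2)^2-1$ lies strictly between consecutive squares. Also $w=xy$ is impossible, because it gives $(x^2-2)(y^2-2)=1$.
\item If $w>xy$, replace $w$ by $2xy-w\in(0,w)$.
\item Otherwise $w$ is the smaller root of $W^2-2xyW+2x^2+2y^2-3$. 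At $W=2x/y$ this quadratic equals $1+(x^2-y^2)(4/y^2-2)<0$. So $w<2x/y$, and $x\mapsto yw-x$ lands in $(0,x)$.
\end{enumerate}
The descent ends at $(1,1,1)$. The Vieta images of $(1,1,1)$ are either itself or contain a zero. Since the replacements are involutions, $(1,1,1)$ has no distinct positive predecessor, so it is the only positive solution.

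Your worry about realizing the involutions by Hurwitz moves, especially those moving the squared block, disappears. The Vieta replacements are only an arithmetic device and need not be Hurwitz moves at all. Once all solutions are known to be small, one move of the last two blocks suffices:
\begin{itemize}
\item The move $(\tau_a^2,\tau_b,\tau_c)\mapsto(\tau_a^2,\tau_b\tau_c\tau_b^{-1},\tau_b)$ acts by $(x,y,z)\mapsto(y+xz,x,-z)$. It sends $(0,y,z)$ to $(y,0,-z)$.
\item Its inverse acts by $(x,y,z)\mapsto(y,x+yz,-z)$. On an all-nonzero solution we have $yz=-x$, so the middle entry becomes $0$.
\end{itemize}
The all-nonzero solutions cannot be ``ruled out'' as you suggest. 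For example, $a=(1,0)$, $b=(0,1)$, $c=(1,1)$ gives $(1,1,-1)$. They have to be moved in this way, after which your Step~3 applies. With these additions your argument becomes the paper's proof.
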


\begin{proof}
Set $x=a\cdot b$, $y=b\cdot c$, and $z=c\cdot a$.  Expanding
the rank-one operators in \eqref{mon:twist} gives
\[
 \operatorname{tr}(\tau_a^2\tau_b\tau_c)
 =2-2x^2-y^2-2z^2+2xyz.
\]
Thus
\begin{equation}
 2x^2+y^2+2z^2-2xyz=3.
 \label{mon:markov}
\end{equation}
If $xyz<0$ with all entries nonzero, the left side is at least
$7$.  The case $y=0$ is excluded by parity.  If $x=0$, then
$|y|=|z|=1$; if $z=0$, then $|x|=|y|=1$.

If all entries are nonzero, choose orientations so that $x,y,z>0$.
By symmetry assume $x\geq z$.  The Vieta replacements
\[
 \bar y=2xz-y,\qquad \bar x=yz-x
\]
preserve \eqref{mon:markov}, and
\[
 y\bar y=2x^2+2z^2-3,\qquad
 x\bar x=(y^2+2z^2-3)/2.
\]
If $z=1$, the equation is $x^2+(y-x)^2=1$, giving
$(x,y,z)=(1,1,1)$.  Suppose $z\geq2$.  Equality $x=z$ would give
\[
 (y-x^2)^2=(x^2-2)^2-1,
\]
strictly between the consecutive squares $(x^2-3)^2$ and
$(x^2-2)^2$; hence $x>z$.  If $y>xz$, replacing $y$ by
$\bar y$ strictly decreases the sum of the coordinates while
keeping them positive.  Equality $y=xz$ would imply
$(x^2-2)(z^2-2)=1$, which is impossible.  Otherwise $y$ is the
smaller root of
\[
 f(Y)=Y^2-2xzY+2x^2+2z^2-3,
\]
and
\[
 f(2x/z)=1+(x^2-z^2)(4/z^2-2)<0.
\]
Consequently $y<2x/z$ and $0<\bar x<x$.  Descent, interchanging
$x,z$ when necessary, ends at $(1,1,1)$.  At this triple the
$y$-replacement fixes the triple, whereas the $x$- and
$z$-replacements produce zero.  Since the replacements are
involutions, there is no distinct positive predecessor.  Thus every
nonzero solution has $|x|=|y|=|z|=1$ and $xyz=1$.

For this last case, choose signs so that $x=y=z=1$.  A change of
basis and a power of $\tau_a$ put
\[
 a=(1,0),\qquad b=(0,1),\qquad c=(-1,-1).
\]
The Hurwitz move $(b,c)\mapsto(c,\tau_c^{-1}b)$ has
$\tau_c^{-1}b=-a$.  Conjugating next by $\tau_a^{-2}$ fixes $a$
and sends $c$ to $(1,-1)$, giving
\eqref{mon:standard-factorization}.  If $z=0$, primitivity gives
$c=\pm a$, and the same centralizer gives the stated form directly.
If $x=0$, a Hurwitz move on the last two factors replaces the
intersection triple by $(-z,-y,yz-x)$, whose entries are nonzero.
This reduces to the preceding case.
\end{proof}

\begin{proposition}
\label{mon:elliptic}
There are ordered meridians about the fibers of types
$IV^*,I_2,I_1,I_1$ and a symplectic marking for which the elliptic
monodromies are
\begin{equation}
\begin{alignedat}{2}
 A_*&=\begin{pmatrix}-1&-1\\1&0\end{pmatrix},
 &\qquad A_2&=\begin{pmatrix}1&2\\0&1\end{pmatrix},\\
 A_+&=\begin{pmatrix}2&1\\-1&0\end{pmatrix},
 &\qquad A_-&=\begin{pmatrix}1&1\\0&1\end{pmatrix}.
\end{alignedat}
\label{mon:elliptic-matrices}
\end{equation}
They satisfy $A_*A_2A_+A_-=I$ and $A_*^3=I$.
\end{proposition}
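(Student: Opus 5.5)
First I will verify the two matrix identities directly, which is a routine computation. Then I will show that there really are meridians and a marking realizing these matrices, using Lemma~\ref{mon:factorization}. For the identities, the products are
\[
 A_*A_2=\begin{pmatrix}-1&-3\\1&2\end{pmatrix},\qquad
 A_+A_-=\begin{pmatrix}2&3\\-1&-1\end{pmatrix},
\]
and multiplying these gives $I$. The characteristic polynomial of $A_*$ is $x^2+x+1$, so $A_*^3=I$.

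For existence, I start from the local Kodaira monodromy types. Proposition~\ref{init:surface} gives fiber types $I_2,I_1,I_1$ at $0,\alpha_\pm$, so with positive meridians their monodromies are conjugate in $SL_2(\Z)$ to $\tau_v^2$, $\tau_{v'}$, $\tau_{v''}$ for primitive vanishing cycles $v,v',v''$. Here I use the twist convention \eqref{mon:twist}, fixing the orientation so that a positive meridian around an $I_n$ fiber gives $\tau_v^n$. The $IV^*$ fiber has monodromy of order three and trace $-1$.

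Next I choose a base point and a standard system of loops, ordered $\infty,0,\alpha_+,\alpha_-$, whose product is trivial in $\pi_1(\PP^1\setminus\{4\text{ pts}\})$. The relation then reads $A_*A_2A_+A_-=I$, so $A_*^{-1}=\tau_a^2\tau_b\tau_c$. The inverse of an order-three element of trace $-1$ again has trace $-1$, so Lemma~\ref{mon:factorization} applies. It shows that, after simultaneous conjugation and Hurwitz moves, $(A_2,A_+,A_-)$ is $(\tau_{(1,0)}^2,\tau_{(1,-1)},\tau_{(1,0)})$.

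Hurwitz moves among the last three blocks correspond to changing the system of meridians around $0,\alpha_+,\alpha_-$ by braid moves that keep the loop around $\infty$ fixed. The $I_2$ block is distinguishable from the $I_1$ blocks, so the lemma's moves keep it in the first slot, and only the two $I_1$ blocks get interchanged. That interchange is exactly the freedom in labelling $\alpha_\pm$, which the paper says is fixed by the marking. Simultaneous conjugation is a change of symplectic basis.

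Finally I evaluate the standard factorization with \eqref{mon:twist}. This gives $\tau_{(1,0)}^2=A_2$ and $\tau_{(1,0)}=A_-$. For $v=(1,-1)$, so $r=1,s=-1$, it gives $\tau_{(1,-1)}=\begin{pmatrix}2&1\\-1&0\end{pmatrix}=A_+$. Then $A_*=(A_2A_+A_-)^{-1}$ is forced by the relation and matches the displayed matrix by the check above.

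I expect the main obstacle to be orientation and ordering conventions. One has to make sure that positive meridians give $\tau_v^{+n}$ rather than inverses, and that the product relation comes out as $A_*A_2A_+A_-=I$ rather than in reversed order. If this goes wrong I will re-orient the basis, replacing the marking by an orientation-compatible one, or reverse the loop ordering. Either change turns the factorization into an equivalent one that the lemma still covers.
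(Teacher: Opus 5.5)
Your proposal is correct and follows the paper's own route. You use Picard--Lefschetz and the sphere relation to write $A_*^{-1}=\tau_a^2\tau_b\tau_c$ with trace $-1$, apply Lemma~\ref{mon:factorization} to get $A_2,A_+,A_-$, and absorb the Hurwitz moves and the conjugation into the choice of paths and the marking. Your explicit matrix checks and your observation that the $I_2$ block is pinned by its conjugacy class, so that only the labels $\alpha_\pm$ can swap, are sound details that the paper leaves implicit.
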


\begin{proof}
Picard--Lefschetz theory writes the finite product as
$B=\tau_a^2\tau_b\tau_c$.  The sphere relation gives
$B=A_*^{-1}$, of trace $-1$.  Apply Lemma~\ref{mon:factorization};
the three twists in \eqref{mon:standard-factorization} are precisely
$A_2,A_+,A_-$.  Their product is
$\left(\begin{smallmatrix}0&1\\-1&-1\end{smallmatrix}\right)$,
whose inverse is the displayed $A_*$.  Hurwitz moves change the
distinguished paths, and conjugation changes the marking.
\end{proof}

We label the last $I_1$-value $\alpha_-$ and choose the simple zero
of $\Phi_0$ there.  The other $I_1$-value is $\alpha_+$.

\subsection{The integral normal function}

Let $\mathbb V$ be the integral period local system of the elliptic
surface over $U$, and let $j:U\hookrightarrow\PP^1$.

\begin{lemma}
\label{mon:normal-lemma}
After an integral change of its lift and, if necessary, conjugation
of the marking by $-I$, the normal-function cocycle of $P$ is
\begin{equation}
 k_*=(1,-1),\qquad k_2=(1,0),\qquad k_+=k_-=0.
 \label{mon:normal}
\end{equation}
\end{lemma}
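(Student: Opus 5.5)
We determine the normal-function cocycle of $P$ directly from the group-cohomology description of sections. A section of the elliptic fibration over $U$ lifts locally to the relative Lie algebra. Its lift along the meridian $\gamma_i$ differs from its continuation by an integral period. This gives a crossed homomorphism $k$ with values in $\Z^2$, satisfying $k(\gamma\gamma')=k(\gamma)+A_\gamma k(\gamma')$ in the marking of Proposition~\ref{mon:elliptic}. Changing the lift by $v\in\Z^2$ replaces $k_i$ by $k_i+(A_i-I)v$. Conjugating the marking by $-I$ (which commutes with every $A_i$) replaces $k$ by $-k$. So the claim is a statement about the class of $k$, plus a normalization of the coboundary.

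The local constraints come first. At the $I_1$ fibers, $P$ meets the identity component, since the component group is trivial. So the local class of $k_\pm$ in $\coker(A_\pm-I)$ vanishes. The cokernels for $A_-$ and $A_+$ are $\Z^2/\Z(1,0)$ and $\Z^2/\Z(1,-1)$ respectively, the images being spanned by the vanishing cycles. At the $I_2$ fiber, $\coker(A_2-I)\simeq\Z\oplus\Z/2$. The finite part records the component of $P$, and Lemma~\ref{init:height} says $P$ meets the nonidentity component. Combined with the tame local form of Lemma~\ref{fill:tate} (the exponent vector of $p$ is $1$ when that of $q$ is $2$), this forces $k_2\equiv(1,0)$ modulo $\im(A_2-I)=\Z(2,0)$. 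Here the free part must vanish, because a holomorphic section has no logarithmic growth in the invariant direction. At $IV^*$, $\coker(A_*-I)\simeq\Z/3$, and $P$ meets a nonidentity component, with local height $4/3$. So $k_*$ represents a generator of $\Z/3$.

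Next I glue. I will use the freedom $v$ and the individual local coboundaries to set $k_+=k_-=0$. This is possible only if the global class allows it simultaneously. To check that, I will solve the cocycle condition imposed by the relation $A_*A_2A_+A_-=I$:
\[
k_*+A_*k_2+A_*A_2k_++A_*A_2A_+k_-=0.
\]
Once $k_\pm=0$, this reduces to $k_*=-A_*k_2$. Choosing $k_2=(1,0)$ gives $k_*=-(-1,1)=(1,-1)$, matching \eqref{mon:normal}.

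It remains to show that $k_\pm$ can be killed by a single integral $v$ together with the residual choices. This is the main obstacle. Local triviality only gives $k_-=(A_--I)v_-$ and $k_+=(A_+-I)v_+$, with possibly different $v_\pm$. One global $v$ removes one of them. The other must be absorbed using the remaining freedom: changes of the meridian lifts, or the Hurwitz/centralizer freedom left in Lemma~\ref{mon:factorization}. If that is insufficient, I would argue by counting. The invariant quantities are the local $\Z/3$ and $\Z/2$ classes and the height $\langle P,P\rangle=1/6=2-4/3-1/2$, using $P\cdot O=0$ from Lemma~\ref{init:height}. These pin the class of $k$ in $H^1(\pi_1U,\Z^2)$, modulo the $\Z^2$-coboundaries, to the displayed one. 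Finally, the sign ambiguity (the class of $k_*$ being a generator or its negative in $\Z/3$) is resolved by conjugating by $-I$.
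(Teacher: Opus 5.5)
There is a genuine gap. Your local analysis is correct, but it cannot determine the cocycle, and the argument breaks at ``Choosing $k_2=(1,0)$''. Once $k_+=k_-=0$, you have $k_2=(m,0)$ and $k_*=-A_*k_2=(m,-m)$. No coboundary survives, since $(A_+-I)v=(A_--I)v=0$ forces $v=0$. So $m$ is an invariant of the class, not a free choice.

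Your component-group conditions give exactly two constraints. From $I_2$ you get $2\nmid m$. From $IV^*$ you get $3\nmid m$, because $(m,-m)\in\im(A_*-I)$ if and only if $3\mid m$. This leaves $m=\pm5,\pm7,\dots$ open.

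What is missing is an integral \emph{global} statement about the normal-function map. The paper supplies it through the exponential sequence $0\to j_*\mathbb V\to\operatorname{Lie}(\mathcal E^0)\to\mathcal E^0\to0$ of the identity-component N\'eron model. This sequence is exact also at the singular values, and $\operatorname{Lie}(\mathcal E^0)\simeq\cO_{\PP^1}(-1)$ has no cohomology. Together these give $\MW^0(S)\simeq H^1(\PP^1,j_*\mathbb V)$. In your normalization the right side is $6\Z$ in the coordinate $m$. On the other side, $\MW^0(S)=6\Z P$, because $P$ generates both component groups. Hence $6m=\pm6$, and conjugation by $-I$ gives $m=1$.

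Your fallback appeal to $\langle P,P\rangle=1/6$ could replace this step only through an explicit formula for the height in terms of the cocycle, of Cox--Zucker type. You neither state nor compute such a formula, so ``these pin the class'' is an assertion, not a proof.

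The step you flag as the main obstacle is in fact immediate. Since $(A_--I)v=v_2(1,0)$ and $(A_+-I)v=(v_1+v_2)(1,-1)$, the single vector $v=(p-n,-p)$ kills both $k_+=n(1,-1)$ and $k_-=p(1,0)$. This works because $\ker(A_+-I)+\ker(A_--I)=\Z^2$. Equivalently, coboundaries act on $(m,n,p)$ through $(0,-1,0)$ and $(-2,-1,-1)$, leaving the single invariant $d=m-2p$. The remedies you propose, changing meridians or applying Hurwitz moves, would change the marking of Proposition~\ref{mon:elliptic}, which the statement does not allow.
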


\begin{proof}
At a multiplicative fiber a section cocycle belongs to the
saturated vanishing-cycle lattice.  In Tate coordinates this follows
by continuing a representative $s^j$ times a unit when $q=s^n$
times a unit.  Accordingly write
\[
 k_2=m(1,0),\qquad k_+=n(1,-1),\qquad k_-=p(1,0).
\]
The affine sphere relation
\[
 k_*+A_*k_2+A_*A_2k_++A_*A_2A_+k_-=0
\]
gives $k_*=(m-2n-p,-m+n)$.  A global change of lift by
$r\in\Z^2$ changes $k_i$ by $(I-A_i)r$.  On $(m,n,p)$ the
coboundaries are generated by $(0,-1,0)$ and $(-2,-1,-1)$.
Consequently the admissible cocycle lattice is
\begin{equation}
 \frac{\Z^3}{\langle(0,-1,0),(-2,-1,-1)\rangle}
 \simeq\Z,\qquad [(m,n,p)]\longmapsto d=m-2p.
 \label{mon:normal-lattice}
\end{equation}
Each class has a unique representative $(d,0,0)$.

To determine the index of the section class, use the analytic
exponential sequence of the identity N\'eron model:
\begin{equation}
 0\longrightarrow j_*\mathbb V
 \longrightarrow\operatorname{Lie}(\mathcal E^0)
 \xrightarrow{\exp}\mathcal E^0\longrightarrow0.
 \label{mon:exponential}
\end{equation}
At a multiplicative value the invariant period is the kernel of
$\CC\to\CC^*$.  At the $IV^*$-value the invariant period lattice
is zero.  On the cubic good-reduction cover the obstruction to
descending a logarithm is its class in
$\coker(A_*-I)\simeq\Z/3$, precisely the component class; it
vanishes for $\mathcal E^0$.  Thus \eqref{mon:exponential} is exact
also at the singular values.  The fundamental line bundle is
$\cO_{\PP^1}(1)$, so
$\operatorname{Lie}(\mathcal E^0)\simeq\cO_{\PP^1}(-1)$.
The vanishing of its $H^0$ and $H^1$ gives
\begin{equation}
 \MW^0(S)\simeq H^1(\PP^1,j_*\mathbb V).
 \label{mon:narrow}
\end{equation}

The right side is the sublattice of \eqref{mon:normal-lattice}
whose local classes vanish in the integral coinvariant groups.
For the representative $(d,0,0)$, the $I_2$ condition is
$d(1,0)\in2\Z(1,0)$, and the $IV^*$ condition is
\[
 (d,-d)\in
 \begin{pmatrix}-2&-1\\1&-1\end{pmatrix}\Z^2.
\]
These say $2\mid d$ and $3\mid d$, respectively; hence
$H^1(\PP^1,j_*\mathbb V)$ is $6\Z$ in that coordinate.
The section and height calculations give $\MW(S)=\Z P$, with
$P$ generating both nontrivial component groups.  Therefore
$\MW^0(S)=6\Z P$.  If $P$ had coordinate $N$, the isomorphism
\eqref{mon:narrow} would send its narrow generator to $6N$, so
$6N=\pm6$.  Conjugation by the central matrix $-I$ fixes all
$A_i$ and changes every $k_i$ to $-k_i$.  Choose $N=1$ and use
the representative $(1,0,0)$ to obtain \eqref{mon:normal}.
\end{proof}

\subsection{The Jacobi extension}

We keep the convention that $F$ covers $t_{-{6P}}$ and contracts the
linear fiber coordinate of $M^\times$.  Put
\[
 V=H^1(X_b;\Z)=\Z\langle\psi,u,w,\delta\rangle,
 \qquad \xi=u\wedge w+6\psi\wedge\delta.
\]
Put $\phi=\xi\wedge\psi=\psi\wedge u\wedge w$ and
$\nu=\psi\wedge u\wedge w\wedge\delta$.

\begin{lemma}
\label{mon:jacobi-lemma}
Integral lifts compatible with the extension filtration can be
chosen so that every geometric monodromy has the form
\begin{equation}
 \mathcal T(A,k,c)=
 \begin{pmatrix}
  1&-(A^{\mathsf t}J_6k)^{\mathsf t}&c\\
  0&A&k\\
  0&0&1
 \end{pmatrix},\qquad
 J_6=\begin{pmatrix}0&6\\-6&0\end{pmatrix},\quad c\in\Z.
 \label{mon:jacobi}
\end{equation}
Here $A$ is the elliptic monodromy and $k$ is the normal-function
cocycle of $P$.  In particular, $\xi$ is monodromy invariant.
\end{lemma}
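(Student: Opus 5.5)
We plan to derive \eqref{mon:jacobi} from the structure of the fiber as an extension of groups. Over the base point $b$ the semiabelian sequence $1\to\CC^*\to M_b^\times\to S_b\to0$ of Proposition~\ref{init:open}, followed by the quotient by $F$, induces on first cohomology a three-step filtration of $V$. The class $\psi$ spans the bottom piece; it is dual to the $\CC^*$ circle and is pulled back from the quotient $X_b\to\CC^*/\lambda$-circle. The classes $u,w$ are pulled back from $S_b$. The class $\delta$ measures the $F$-direction, that is, it evaluates the period of $F$. We choose the ordering so that the filtration reads $\Z\psi\subset\Z\langle\psi,u,w\rangle\subset V$. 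Monodromy preserves this filtration, because the $\CC^*$ subgroup and the fibration over $S_b$ are globally defined over $U$. Monodromy therefore acts by a block upper-triangular matrix with diagonal blocks $(\epsilon,A,\epsilon')$. We get $\epsilon=\epsilon'=1$ from two facts: the $\CC^*$ orientation is fixed by the complex structure, and the $F$-period is globally single valued.

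We next identify the off-diagonal blocks. The middle column $k$ records how the lift of the $F$-period to $S_b$ moves under monodromy. Since $F$ covers translation by $-6P$, this is the normal function of $6P$. We would like to normalize the lifts so that the column is exactly the cocycle $k$ of Lemma~\ref{mon:normal-lemma} rather than $6k$. We expect to achieve this by scaling: the $S_b$-directions are measured against $\delta$, and the factor $6$ is pushed into the symplectic form $J_6$. The corner $c$ is an unconstrained integer. Indeed, changing the integral lift of the $\CC^*$-loop along the meridian shifts $c$ by integers, so $c$ stays integral, although its value is not needed here.

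The key step is the top row, which must equal $-(A^{\mathsf t}J_6k)^{\mathsf t}$. We will obtain it from invariance of $\xi$ rather than computing it geometrically. The class $\xi=u\wedge w+6\psi\wedge\delta$ is $c_1$ of the $\CC^*$-bundle $M^\times_b/\langle F\rangle\to S_b\times(\text{circle})$, made precise as the Chern class of the Heisenberg (theta) extension: the commutator pairing of lifts of translations in $M^\times$ is the Weil pairing on $S_b$ times the degree of $F$'s translation, which is $6\cdot\deg$. Since this extension is defined globally over $U$, $\xi$ is monodromy invariant. Given the shape $\mathcal T(A,k,c)$ with unknown top row $r^{\mathsf t}$, we expand $\mathcal T^*\xi=\xi$. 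On the $\wedge^2$ of the middle block, $A\in SL_2(\Z)$ preserves $u\wedge w$. The mixed terms pair the middle block with $\psi$ and with $\delta$; setting their coefficient to zero is the condition $6r+A^{\mathsf t}J_6k\cdot(\text{sign})=0$ in the convention where matrices act on columns, which yields the displayed row. The $c$ term contributes $6c\,\psi\wedge\psi=0$, consistent with $c$ being free.

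The main obstacle is to justify integrally that the middle column is $k$ and not $6k$, and that the top row is an integer vector. Integrality of the top row is automatic once the column is $k$, since $J_6k\in6\Z^2$ and we divide by $6$. The column itself depends on choosing $\delta$ dual to a loop realizing the $F$-period with its $S_b$-component being the normal-function lift of $P$. I would first try to lift $F$ to the translation by $-6\widetilde P$ in the universal cover, which gives $6k$. If the column then comes out as $6k$, that signals a mismatch of bases, and I would rescale the pairing convention ($\xi$ carrying the $6$) so that the matrix becomes exactly \eqref{mon:jacobi}.
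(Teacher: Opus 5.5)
Your outline matches the paper's: a three-step filtration with diagonal blocks $1,A,1$, a geometric identification of the last column, and the top row forced by invariance of $\xi$. However, the step you flag as the main obstacle is a genuine gap, and it comes from reversing the roles of $\psi$ and $\delta$.

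Write $a_0$ for the $\CC^*$-circle, $a_1,a_2$ for the lifted elliptic periods, and $a_3$ for the $F$-loop. The bottom piece $\Z\psi$ is the annihilator of $H_1(M_b^\times)=\Z\langle a_0,a_1,a_2\rangle$, so $\psi$ is dual to the $F$-loop. The class $\delta$ is defined only modulo $\langle\psi,u,w\rangle$, the annihilator of $a_0$, so it is the class detecting the $\CC^*$-circle. Consequently, the motion of the $F$-period, whose elliptic part is $-6p$, is recorded in the $\psi$-row, not in the $\delta$-column. That row is where the $6$-divisible entries $-(A^{\mathsf t}J_6k)^{\mathsf t}$ sit.

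The $\delta$-column comes from $P$ itself. Because $M=\cO(P-O)$ is glued by $(\zeta,v)\sim(q\zeta,e^{2\pi ip}v)$, the periods of $M_b^\times$ are $(0,1),(1,0),(\tau,-p)$. A change of lift $p\mapsto p+m+n\tau$ sends $a_1\mapsto a_1+na_0$ and $a_2\mapsto a_2-ma_0$, that is, $\delta\mapsto\delta+mu+nw$ modulo $\Z\psi$. This is the missing argument. Your fallback of rescaling a column $6k$ down to $k$, or moving the $6$ into $\xi$, is not available. Dividing a basis vector by $6$ is not an integral change of basis, and altering $\xi$ alters the invariant you rely on. A column $6k$ would simply be a different, wrong representation.

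The top-row computation also puts the $6$ in the wrong place. Take $A=\begin{pmatrix}a&b\\c&d\end{pmatrix}$ and first row $(1,r_1,r_2,c_0)$. The coefficients of $\psi\wedge u$ and $\psi\wedge w$ in $T\xi-\xi$ are $r_1b-r_2a+6k_1$ and $r_1d-r_2c+6k_2$. So the $6$ multiplies $k$, and $r=6(ck_1-ak_2,\,dk_1-bk_2)\in6\Z^2$ with no division. Your condition $6r\pm A^{\mathsf t}J_6k=0$ would instead give a row one sixth of the one in \eqref{mon:jacobi}.

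Finally, invariance of $\xi$ needs an actual construction. $M_b^\times/\langle F\rangle$ is not naturally a bundle over $S_b\times S^1$, since $F$ covers a nontrivial translation. Even topologically, the Euler class of a circle bundle pulls back to zero on its total space, so ``$c_1$ of the $\CC^*$-bundle'' cannot produce $\xi$.

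The paper proceeds as follows:
\begin{itemize}
\item It writes the lattice as $\Z^2+\Omega D_6\Z^2$ with symmetric $\Omega=\begin{pmatrix}\tau&-p\\-p&\kappa/6\end{pmatrix}$. The symmetry encodes that the same $p$ occurs in $a_2$ and, multiplied by $6$, in $a_3=(-6p,\kappa)$.
\item It computes the Appell--Humbert form: $E(a_1,a_2)=1$, $E(a_0,a_3)=6$, and all other basic pairings vanish.
\item It checks that $E$ is unchanged under changes of elliptic marking, of the logarithmic lift of $P$, and of the rigidifying scalar.
\end{itemize}
Your ``Weil pairing times six'' heuristic points toward this but does not establish it.
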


\begin{proof}
On a simply connected open subset of $U$, choose logarithms with
\[
 E_b=\CC/(\Z+\tau(b)\Z),\qquad P_b=[p(b)],\qquad {6P}_b=[6p(b)].
\]
Set $q=e^{2\pi i\tau}$ and $\rho=e^{2\pi ip}$.
The multiplicative description of $M$ used in the proof of
Lemma~\ref{fill:tate} gives the line-bundle identification
$(\zeta,v)\sim(q\zeta,\rho v)$ for $M=\cO(P-O)$, with $v$
the linear fiber coordinate. With
$\zeta=e^{2\pi iz}$ and $v=e^{-2\pi iy}$, the three periods of
$M_b^\times$ are
\[
 a_0=(0,1),\qquad a_1=(1,0),\qquad a_2=(\tau,-p).
\]
After rigidification at $O$, the lift $F$ is a group translation.
Since it covers $-{6P}$, its logarithm supplies a fourth period
\begin{equation}
 a_3=(-6p,\kappa).
 \label{mon:periods}
\end{equation}
The covering sequence for the quotient by $F$ shows that
$(a_0,a_1,a_2,a_3)$ is an integral homology basis.

Set
\[
 D_6=\begin{pmatrix}1&0\\0&6\end{pmatrix},\qquad
 \Omega=\begin{pmatrix}\tau&-p\\-p&\kappa/6\end{pmatrix}.
\]
The period lattice is $\Z^2+\Omega D_6\Z^2$, where
$a_1=e_1$, $a_0=e_2$, $a_2=\Omega D_6e_1$, and
$a_3=\Omega D_6e_2$.
For $\eta_{n,m}=n+\Omega D_6m$ and $r=D_6m$, the factors
\[
 j_{n,m}(\mathbf z)=
 \exp(-\pi i r^{\mathsf t}\Omega r-2\pi i r^{\mathsf t}\mathbf z)
\]
satisfy the line-bundle cocycle identity.  Their logarithmic
commutator, or Appell--Humbert alternating form \cite{BL}, is
\[
 E(\eta_{n,m},\eta_{n',m'})
 =(D_6m')^{\mathsf t}n-(D_6m)^{\mathsf t}n'.
\]
Hence $E(a_1,a_2)=1$, $E(a_0,a_3)=6$, and all other basic
pairings vanish.  This calculation requires symmetry, not positive
definiteness, of the period matrix.

The form is independent of the elliptic marking, the integral
logarithmic lift of $P$, and the scalar used to rigidify the bundle.
For the logarithmic
change $p\mapsto p+m+n\tau$, the gluing factors give the coordinate
identification $y_{\rm old}=y_{\rm new}+nz$, up to a scalar
translation.  Thus
$a_1\mapsto a_1+na_0$, $a_2\mapsto a_2-ma_0$, while the
elliptic part of $a_3$ changes by $-6m a_1-6n a_2$.
These transformations preserve $E$; its remaining central entry
is unrestricted.  Changes of elliptic marking preserve its middle
symplectic pairing, and scalar frame changes do not affect $E$.
It therefore defines a monodromy-invariant integral class.

Writing $a_i^\vee$ for the ordinary dual basis, take
\begin{equation}
 \psi=a_3^\vee,\qquad u=-a_2^\vee,\qquad
 w=a_1^\vee,\qquad\delta=-a_0^\vee.
 \label{mon:dual-basis}
\end{equation}
The class of $E$ is then $\xi$.  The extension filtration gives
diagonal blocks $1,A,1$.  The displayed logarithmic change acts
on the last basis vector by
$\delta\mapsto\delta+mu+nw$ modulo $\Z\psi$, identifying
the last middle block with the cocycle $k$ of $P$.
For a general first row $(1,r_1,r_2,c_0)$ and
$A=\left(\begin{smallmatrix}a&b\\c&d\end{smallmatrix}\right)$,
expansion gives
\[
 T\xi-\xi=\psi\wedge
 \bigl((r_1b-r_2a+6k_1)u+(r_1d-r_2c+6k_2)w\bigr).
\]
Its vanishing forces
\[
 (r_1,r_2)=6(ck_1-ak_2,dk_1-bk_2)
 =-(A^{\mathsf t}J_6k)^{\mathsf t},
\]
leaving only the integer $c_0$.  This is \eqref{mon:jacobi}.
\end{proof}

\begin{proposition}
\label{mon:representation}
In the marking above, the geometric monodromies on $V$ are
\begin{equation}
\begin{alignedat}{2}
 T_*&=\begin{pmatrix}
 1&0&-6&-2\\0&-1&-1&1\\0&1&0&-1\\0&0&0&1
 \end{pmatrix},
 &\qquad T_2&=\begin{pmatrix}
 1&0&6&3\\0&1&2&1\\0&0&1&0\\0&0&0&1
 \end{pmatrix},\\[1ex]
 T_+&=\begin{pmatrix}
 1&0&0&0\\0&2&1&0\\0&-1&0&0\\0&0&0&1
 \end{pmatrix},
 &\qquad T_-&=\begin{pmatrix}
 1&0&0&-1\\0&1&1&0\\0&0&1&0\\0&0&0&1
 \end{pmatrix}.
\end{alignedat}
\label{mon:matrices}
\end{equation}
They satisfy $T_*T_2T_+T_-=I$, $T_*^3=I$, and $T_i\xi=\xi$.
\end{proposition}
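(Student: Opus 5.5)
The plan is to assemble each $T_i$ from Lemma~\ref{mon:jacobi-lemma}, which reduces every monodromy to the form $\mathcal T(A_i,k_i,c_i)$. Only the central entries $c_i$ remain to be found, and then the three identities can be checked by direct matrix multiplication.

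First, insert the elliptic monodromies \eqref{mon:elliptic-matrices} and the cocycle \eqref{mon:normal} into \eqref{mon:jacobi}. For $T_+$ and $T_-$ we have $k=0$, so the first row is $(1,0,0,c_\pm)$. For $T_2$, with $A_2=\left(\begin{smallmatrix}1&2\\0&1\end{smallmatrix}\right)$ and $k_2=(1,0)$, we get
\[
A_2^{\mathsf t}J_6k_2=A_2^{\mathsf t}(0,-6)^{\mathsf t}=(0,-6)^{\mathsf t},
\]
so the middle first-row entries are $(0,6)$. For $T_*$, $k_*=(1,-1)$ gives $J_6k_*=(-6,-6)$, and applying $A_*^{\mathsf t}$ yields $(0,6)$, hence entries $(0,-6)$. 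These agree with the displayed matrices, so only the $c_i$ are left.

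Second, determine the $c_i$ from the local models. At $\alpha_+$, Lemma~\ref{fill:tate} shows that $g=F$ has exponent vector $(0,0)$ and that the only degenerating period is $q$. The extension direction is therefore not twisted by the meridian, so $c_+=0$. At $\alpha_-$ the linearization $\Phi$ vanishes simply, so $F$ acquires valuation one in the fiber coordinate; this is the rank-two local form $(u_3,tu_4)$. The period $a_3$ then picks up $-a_0$ under the meridian, giving $c_-=-1$ up to the sign conventions in \eqref{mon:dual-basis}. At $0$, Lemma~\ref{fill:tate} gives $g=Fh^3$ with exponent vectors $(2,1)$ and $(0,0)$. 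Tracking $a_3\mapsto a_3+3a_2+\dots$ through $\psi=a_3^\vee$ and the pairing should give $c_2=3$. I expect this computation to be the main obstacle. The shift by $h^3$ interacts with the Heisenberg-type correction $-\pi i r^{\mathsf t}\Omega r$, and the honest value of $c_2$ depends on a half-integral quadratic term. Rather than derive it fully from the local model, I would fix $c_+$ and $c_-$ locally as above and then let the global relation $T_*T_2T_+T_-=I$ determine the rest.

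Third, impose the global relation. Since the $A_i$ multiply to $I$ and the cocycle relation holds, the product of the $\mathcal T$'s is unipotent of the form $\mathcal T(I,0,\Sigma)$. Here $\Sigma$ is $c_*+c_2+c_++c_-$ plus a cross term coming from the first rows and the $k$'s. This gives one linear equation among the $c_i$. The second equation comes from $T_*^3=I$, which holds because the $IV^*$ family becomes the smooth family $\mathscr A'$ with the order-three automorphism $\rho$ of Proposition~\ref{fill:good}. Because $\rho$ preserves the distinguished section, $T_*$ has finite order three. Computing $\mathcal T(A_*,k_*,c_*)^3$ gives its corner entry as $3c_*$ plus a constant from the first-row and $k$ products; setting this to zero fixes $c_*=-2$. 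The global relation then determines $c_2$ given $c_\pm$, and I would check that the result is consistent with the local value $3$ at $0$.

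Finally, verify $T_i\xi=\xi$. This is automatic from the form \eqref{mon:jacobi} by Lemma~\ref{mon:jacobi-lemma}, but I would confirm it directly on the four displayed matrices. I would likewise verify both $T_*^3=I$ and $T_*T_2T_+T_-=I$ by explicit $4\times4$ multiplication.
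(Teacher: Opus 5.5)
Your plan is correct, but it distributes the four constraints differently from the paper. Both arguments get $c_*=-2$ from $T_*^3=I$. That identity needs only the smooth extension over the cubic cover in Proposition~\ref{fill:good}; preservation of the section plays no role in it.

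The paper then fixes $c_2$ and $c_+$ by the rank-one condition of Proposition~\ref{fill:rankone}:
\begin{itemize}
\item the nonzero columns of $\mathcal T(A_2,k_2,c)-I$ are $2(3\psi+u)$ and $c\psi+u$, which are proportional only for $c=3$;
\item the nonzero columns of $\mathcal T(A_+,0,c)-I$ are $u-w$ and $c\psi$, which forces $c=0$.
\end{itemize}
The sphere relation then gives $c_-=-1$. Its corner entry is $c_*+c_2+c_++c_-$, because the only cross term, the first-row block $(0,-6)$ of $T_*$ paired with $k_2=(1,0)$, vanishes. So the step you flagged as the main obstacle is a one-line rank check, and no Heisenberg correction has to be tracked.

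Your route instead reads $c_-$ off the rank-two model and solves the sphere relation for $c_2$. That works, but the sign of $c_-$ is exactly what separates $c_2=3$ from $c_2=1$, so it cannot be left ``up to conventions''. The sign is in fact determined relative to $A_-$. Both $q=tu_1$ and the $v$-multiplier $e^{-2\pi i\kappa}=tu_4$ of $F$ have simple zeros. A positive loop therefore gives $\tau\mapsto\tau+1$ and, because $v=e^{-2\pi iy}$, $\kappa\mapsto\kappa-1$. Thus $a_2\mapsto a_2+a_1$ and $a_3\mapsto a_3-a_0$. In the dual basis \eqref{mon:dual-basis} the first reproduces $A_-$, and the second gives $T_-\delta=\delta-\psi$, i.e.\ $c_-=-1$. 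This holds whichever homology/cohomology convention is used, since the convention is pinned by matching $A_-$.

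You should also say why the local periods may be used in the global marking at $\alpha_-$. Here $A_-=\tau_{(1,0)}$ and $k_-=0$ mean the continued periods near $\alpha_-$ are the Tate ones with single-valued $p$. At $\alpha_+$ the global vanishing cycle is $(1,-1)$, not the Tate cycle. So your ``not twisted'' argument should be replaced by the conjugation-invariant statement that $T_+-I$ has rank one.

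The paper's choice uses only conjugation-invariant rank conditions plus the two global relations, so no orientation conventions enter. Yours, supplemented by the rank check at $0$, gives an overdetermined system and hence an independent confirmation of the sign at $\alpha_-$.
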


\begin{proof}
Insert \eqref{mon:elliptic-matrices} and \eqref{mon:normal} into
\eqref{mon:jacobi}. It remains to determine the central entries
$c_*,c_2,c_+,c_-$. The extension over the cubic cover in
Proposition~\ref{fill:good} gives $T_*^3=I$, whereas
\[
 \mathcal T(A_*,k_*,c)^3=I+(3c+6)E_{14}.
\]
Thus $c_*=-2$. At $I_2$, the nonzero columns of
$\mathcal T(A_2,k_2,c)-I$ are $2(3\psi+u)$ and
$c\psi+u$. The rank-one monodromy of
Proposition~\ref{fill:rankone} therefore forces $c_2=3$.
The same rank condition at $\alpha_+$, where $k_+=0$, gives
$c_+=0$. Finally, the sphere relation determines $c_-$, since
\[
 \mathcal T(A_*,k_*,-2)\mathcal T(A_2,k_2,3)
 \mathcal T(A_+,0,0)\mathcal T(A_-,0,c)
 =I+(c+1)E_{14}.
\]
Hence $c_-=-1$. These are the matrices in
\eqref{mon:matrices}; invariance of $\xi$ follows from
Lemma~\ref{mon:jacobi-lemma}.
\end{proof}

\section{The fundamental group}\label{pi:section}

Let \(\widehat T_i=(T_i^{-1})^{\mathsf t}\) act on
\(\Lambda=V^\vee\). The section of Proposition~\ref{init:section}
supplies base points and splits the fundamental group sequence over
\(U\). We use positive geometric meridians \(m_i\), with paths read
from right to left. With the monodromy convention of
Section~\ref{mon:section}, the mapping-torus presentation is
\begin{equation}
\begin{split}
 \pi_1(X_U)=\langle\Lambda,m_*,m_2,m_+,m_-\mid\;&
 [\Lambda,\Lambda]=1,\\
 &m_i x m_i^{-1}=\widehat T_i^{-1}x,\\
 &m_-m_+m_2m_*=1\rangle.
\end{split}
\label{pi:presentation}
\end{equation}
Indeed, the corresponding product of conjugation actions is
\[
 \widehat T_-^{-1}\widehat T_+^{-1}
 \widehat T_2^{-1}\widehat T_*^{-1}=I.
\]
In expressions involving
\(\Lambda\), we use additive notation for its abelian subgroup.

\begin{lemma}\label{pi:coinvariants}
The three Mumford fillings leave at most the infinite cyclic group
generated by \(\widehat\psi\). The image of \(\ell\) in this
group is \(\widehat\psi\).
\end{lemma}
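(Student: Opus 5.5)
The plan is to compute, directly from the matrices in \eqref{mon:matrices}, the subgroup of $\Lambda$ killed by the three multiplicative fillings, and then to read off the quotient. By Lemma~\ref{fill:boundary}, the filling at each multiplicative point $i\in\{2,+,-\}$ kills the primitive vanishing lattice $\im(\widehat T_i-I)\subset\Lambda$, as well as the lifted meridian. So the image of $\Lambda$ in $\pi_1$ of the union of $X_U$ with these three fillings is a quotient of
\[
 \Lambda\big/\textstyle\sum_{i\in\{2,+,-\}}\im(\widehat T_i-I).
\]
It therefore suffices to show that this quotient is generated by $\widehat\psi$, and that $\ell$ maps to $\widehat\psi$.

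The first step is a reduction that avoids inverting matrices. Since
\[
 (T_i^{-1})^{\mathsf t}-I=-(T_i^{-1})^{\mathsf t}\bigl(T_i^{\mathsf t}-I\bigr)
\]
and $(T_i^{-1})^{\mathsf t}$ is invertible over $\Z$, we have $\im(\widehat T_i-I)=\im(T_i^{\mathsf t}-I)$. In the dual basis $(\widehat\psi,\widehat u,\widehat w,\widehat\delta)$, this image is spanned by the rows of $T_i-I$, read as vectors.

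The second step reads these rows off \eqref{mon:matrices}.
\begin{itemize}
\item For $T_2$, the nonzero rows give $6\widehat w+3\widehat\delta$ and $2\widehat w+\widehat\delta$. These span $\Z(2\widehat w+\widehat\delta)$, which is primitive, consistent with the rank-one claim.
\item For $T_+$, the rows give $\pm(\widehat u+\widehat w)$.
\item For $T_-$, the rows give $-\widehat\delta$ and $\widehat w$.
\end{itemize}
Hence the sum of the three vanishing lattices contains $\widehat\delta$ and $\widehat w$, and therefore also $\widehat u=(\widehat u+\widehat w)-\widehat w$. The $T_2$ generator is then already included. The sum is exactly $\Z\langle\widehat u,\widehat w,\widehat\delta\rangle$, and the quotient is the infinite cyclic group generated by the class of $\widehat\psi$.

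The last step uses \eqref{main:twist}. Since $\ell=\widehat\psi-2\widehat u-4\widehat w$, its image in this quotient is $\widehat\psi$.

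The only step needing care is the first: we must make sure that Lemma~\ref{fill:boundary} is applied with the correct action $\widehat T_i$ on homology rather than $T_i$ on cohomology, because confusing the two would replace rows by columns. With $T_i-I$ in place of $T_i^{\mathsf t}-I$, the $T_-$ contribution would be spanned by $\widehat\psi$ and $\widehat u$ instead, and the conclusion would change. The phrase ``at most'' in the statement reflects that the meridian relations, and the relations coming from $m_*$ at infinity, are not yet imposed. Those are handled afterwards using \eqref{fill:log-group}.
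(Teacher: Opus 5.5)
Your proposal is correct and follows the paper's proof. You compute the three vanishing lattices $\im(\widehat T_i-I)$, getting the same generators $2\widehat w+\widehat\delta$, $\widehat u+\widehat w$, $\widehat w$ and $\widehat\delta$, note that together they span $\Z\langle\widehat u,\widehat w,\widehat\delta\rangle$, and read off $[\ell]=[\widehat\psi]$. The only difference is that you take the rows of $T_i-I$ instead of inverting and transposing. That shortcut is valid, but the reason is that $(T_i^{-1})^{\mathsf t}$ commutes with $T_i^{\mathsf t}-I$, so that $\im(\widehat T_i-I)=\im\bigl((T_i^{\mathsf t}-I)(T_i^{-1})^{\mathsf t}\bigr)=\im(T_i^{\mathsf t}-I)$; invertibility of $(T_i^{-1})^{\mathsf t}$ alone does not give this.
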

\begin{proof}
Direct inverse transposition of \eqref{mon:matrices} gives
\begin{align*}
 \im(\widehat T_2-I)&=\Z(2\widehat w+\widehat\delta),\\
 \im(\widehat T_+-I)&=\Z(\widehat u+\widehat w),\\
 \im(\widehat T_--I)&=\Z\widehat w\oplus\Z\widehat\delta.
\end{align*}
Consequently
\[
 \Lambda\Big/\sum_{i=2,+,-}\im(\widehat T_i-I)
 =\Z[\widehat\psi],\qquad [\ell]=[\widehat\psi].
\]
The section extends across each of these fillings, so
\(m_2=m_+=m_-=1\). Their boundary maps are surjective on
fundamental groups by Lemma~\ref{fill:boundary}. Van Kampen
therefore introduces no new
generators and imposes at least the displayed relations.
\end{proof}

\begin{proposition}\label{pi:trivial}
The threefold \(X\) is simply connected.
\end{proposition}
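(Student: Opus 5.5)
The plan is to compute $\pi_1(X)$ by van Kampen, starting from the presentation \eqref{pi:presentation} of $\pi_1(X_U)$ and gluing in the four local fillings one at a time. Each filling has a boundary inclusion that is surjective on $\pi_1$ (Lemma~\ref{fill:boundary}), so no gluing introduces new generators. The fundamental group of $X$ is therefore the quotient of $\pi_1(X_U)$ by the relations each filling imposes.

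First I handle the three Mumford fillings. Lemma~\ref{pi:coinvariants} shows they force $m_2=m_+=m_-=1$ and kill $\im(\widehat T_i-I)$ for $i=2,+,-$. After these relations, the abelian fiber subgroup $\Lambda$ maps onto $\Z[\widehat\psi]$, and $[\ell]=[\widehat\psi]$. The sphere relation $m_-m_+m_2m_*=1$ then gives $m_*=1$. So after the Mumford fillings, $\pi_1$ is a quotient of the cyclic group generated by the image of $\widehat\psi$. It is abelian, generated by the image of $\Lambda$, and $m_*$ is trivial.

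Next I impose the logarithmic filling at infinity via \eqref{fill:log-group}. Its relation $m_*^3=-\ell$ must hold in the quotient. Since $m_*=1$, this gives $\ell=0$, that is, $[\widehat\psi]=0$. The relation $m_*\eta m_*^{-1}=A^{-1}\eta$ is consistent and adds nothing further. Hence the group is trivial.

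The main obstacle is bookkeeping: making sure the meridian $m_*$ in \eqref{fill:log-group} is the same element as the $m_*$ in \eqref{pi:presentation}. Both must be lifted with the same distinguished section of Proposition~\ref{init:section}, with the same orientation and base path. Otherwise the relation could read $m_*^3=-\ell+(\text{a correction in }\Lambda)$, and I would need that correction to also die in $\Z[\widehat\psi]$. To secure this, I would invoke the normalized comparison \eqref{fill:normalized-comparison}, which preserves the section across the cubic base change, together with the conjugacy \eqref{fill:log-conjugacy}. These show that the external section is the one used in Lemma~\ref{fill:boundary}, so the identification holds exactly. I would also check that the orientation convention for $\Lambda=V^\vee$ with $\widehat T=(T^{-1})^{\mathsf t}$ matches the one under which $\ell$ is invariant; this follows from \eqref{main:twist}.

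Even if a sign or a translation ambiguity remained, the argument would survive. Only the $\psi$-coordinate of the relation matters in $\Z[\widehat\psi]$, and $\psi(\ell)=1$ forces that coordinate to be a unit. Since the generator equals its own unit multiple set to zero, the conclusion $\pi_1(X)=1$ is robust.
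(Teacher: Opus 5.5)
This is the paper's argument. The Mumford fillings kill $m_2,m_+,m_-$ and cut the fiber down to $\Z[\widehat\psi]$ with $[\ell]=[\widehat\psi]$; the sphere relation gives $m_*=1$; the logarithmic relation $m_*^3=-\ell$ then kills $\widehat\psi$; and boundary surjectivity excludes new generators.

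Drop your final ``robustness'' paragraph, though, because it is false. Suppose the meridian in the logarithmic relation were $x m_*$ for some $x\in\Lambda$, instead of the $m_*$ of the presentation of $\pi_1(X_U)$. After $m_*=1$ the relation would read $(3\psi(x)+1)\widehat\psi=0$, which gives $\Z/4$ for $\psi(x)=1$ and $\Z/2$ for $\psi(x)=-1$. So the exact section-preserving normalization you invoke just before is essential, as the paper stresses, and not merely a convenience.
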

\begin{proof}
The three Mumford fillings kill $m_2,m_+,m_-$, so the sphere
relation in \eqref{pi:presentation} gives $m_*=1$.
The logarithmic transformation imposes $m_*^3=-\ell$ by
Lemma~\ref{fill:boundary}; hence $\ell=0$. By
Lemma~\ref{pi:coinvariants}, this kills the remaining fiber
generator. Since all boundary maps are surjective on fundamental
groups, no further generators occur, and $\pi_1(X)=1$.
\end{proof}

\section{The integral homology}\label{hom:section}

Write \(G=\langle T_*,T_2,T_+,T_-\rangle\). We use the same letter
\(V\) for the integral local system over \(U\) and its marked stalk.
We retain the classes $\xi,\phi,\nu$ from
Section~\ref{mon:section}. Brackets denote classes in coinvariants. Let
\(j:U\hookrightarrow\PP^1\), and let \(\omega\) generate
\(H^2(\PP^1;\Z)\).

\begin{lemma}\label{hom:lattices}
The invariant and coinvariant groups are free of rank one in every
degree. In degrees $q=0,1,2,3,4$, the invariant groups are generated by
\[
 1,\quad\psi,\quad\xi,\quad\phi,\quad\nu,
\]
and the coinvariant groups by
\[
 1,\quad[\delta],\quad[\psi\wedge\delta],\quad[u\wedge w\wedge\delta],\quad[\nu],
\]
respectively.
Moreover, \([u\wedge w]=6[\psi\wedge\delta]\) and \([\xi]=12[\psi\wedge\delta]\).
\end{lemma}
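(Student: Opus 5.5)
The plan is to use the extension filtration of $V$ and reduce everything to the elliptic monodromy, with no heavy computation. Let $F_1=\Z\psi\subset F_2=\Z\langle\psi,u,w\rangle\subset V$. By \eqref{mon:jacobi} this filtration is $G$-stable, with graded pieces $\Z$, $H=\Z^2$ (acted on by $A_*,A_2,A_+,A_-$) and $\Z$. The key input is that
\[
 H^{A}=0=H_{A},
\]
meaning the elliptic monodromy has no integral invariants and no integral coinvariants on $H$. Using \eqref{mon:elliptic-matrices}, the images of $A_--I$ and $A_+-I$ are $\Z(1,0)$ and $\Z(1,-1)$. Together these span $\Z^2$, so the coinvariants vanish. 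Invariants vanish because $A_*$ has no eigenvalue $1$. Consequently, in the induced filtration of $\bigwedge^qV$, every graded piece containing exactly one factor from $H$ has no invariants and no coinvariants. The only surviving pieces are $\bigwedge^aF_1\otimes\bigwedge^bH\otimes\bigwedge^c(V/F_2)$ with $b\in\{0,2\}$, and each is the trivial module $\Z$ since $\det A=1$.

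In degrees $0,1,3,4$ exactly one such trivial piece survives. Degree $1$ gives $\psi$ as a sub and $\delta$ as a quotient. Degree $3$ gives $\psi\wedge u\wedge w$ as a sub and $u\wedge w\wedge\delta$ as a quotient. Degree $4$ gives $\nu$, fixed because $\det T_i=1$. Left exactness of invariants along the filtration shows that the invariant group injects into the invariants of the surviving piece, so it has rank at most one. The listed elements are visibly invariant: $\psi$ and $\phi=\psi\wedge u\wedge w$ are invariant since the first column of each $T_i$ is $e_1$ and the $F_2$-block is $\operatorname{diag}(1,A)$, and $\nu$ is invariant since $\det T_i=1$. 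Each listed element also hits the generator of that piece, so it generates. Dually, right exactness of coinvariants shows that the coinvariant group is a quotient of $\Z$ generated by the stated class. To see that it is exactly $\Z$, I will use the $G$-invariant wedge pairing $\bigwedge^qV\times\bigwedge^{4-q}V\to\bigwedge^4V=\Z\nu$. Pairing with an invariant class of complementary degree gives a $G$-invariant functional. For example, $x\mapsto x\wedge\phi$ sends $\delta$ to $\pm\nu$, and $x\mapsto x\wedge\psi$ sends $u\wedge w\wedge\delta$ to $\pm\nu$. Hence the coinvariants surject onto $\Z$ and are free of rank one.

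The main obstacle is degree $2$, where two trivial pieces survive, $\psi\wedge\delta$ (from $b=0$) and $u\wedge w$ (from $b=2$), and the extension between them must be determined. For invariants, the rank is at most two. I will show that $\psi\wedge\delta$ is not invariant modulo multiples of $\xi$. Its image under $T_--I$ is $\psi\wedge(-\psi)=0$, so this is not the relevant test; instead, $(T_2-I)(\psi\wedge\delta)=\psi\wedge(3\psi+u)=\psi\wedge u\neq0$. Any invariant $x=\alpha\,u\wedge w+\beta\,\psi\wedge\delta+(\text{terms with one $H$-factor})$ therefore has its $(\alpha,\beta)$ constrained. I expect the constraint to be $\beta=6\alpha$, read off from the $u$-column $(r_1,r_2)$ computation in the proof of Lemma~\ref{mon:jacobi-lemma}. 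Since $\xi=u\wedge w+6\psi\wedge\delta$ is invariant and primitive, it then generates.

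For coinvariants in degree $2$, I will use the invariant functional $x\mapsto x\wedge\xi$. It sends $\psi\wedge\delta\mapsto\pm\nu$ and $u\wedge w\mapsto\pm6\nu$, up to a common sign, and $\xi\mapsto\pm12\nu$ because $\xi\wedge\xi=12\,u\wedge w\wedge\psi\wedge\delta$. Hence $[\psi\wedge\delta]$ has infinite order and the coinvariants have rank at least one. To obtain the relation $[u\wedge w]=6[\psi\wedge\delta]$, and hence rank exactly one, I will exhibit an explicit coboundary, for instance $(T_*-I)$ or $(T_2-I)$ applied to $u\wedge\delta$ or $w\wedge\delta$, modulo the terms with one $H$-factor that already die. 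This explicit coboundary is the step I would check by hand. Once $[u\wedge w]=6[\psi\wedge\delta]$ holds, the group is generated by $[\psi\wedge\delta]$ and maps isomorphically onto $\Z$ under the functional. Finally, $[\xi]=[u\wedge w]+6[\psi\wedge\delta]=12[\psi\wedge\delta]$.
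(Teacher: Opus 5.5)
There is a genuine gap: in degrees one and three your filtration argument gives an upper bound of two on the ranks, not one.

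\textbf{What works.} Your lower bounds are sound. The listed classes are invariant, and the functionals $x\mapsto x\wedge\phi$, $x\mapsto x\wedge\xi$, $x\mapsto x\wedge\psi$ correctly show that $[\delta]$, $[\psi\wedge\delta]$, $[u\wedge w\wedge\delta]$ have infinite order and map onto generators of $\Z\nu$.

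\textbf{The gap.} Contrary to your claim, two trivial graded pieces survive in degrees one and three. In degree one they are $\Z\psi$ at the bottom and $\Z\delta$ at the top. In degree three they are $\Z\phi$ and $\Z\,u\wedge w\wedge\delta$. Left exactness only gives $0\to\Z\psi\to V^G\to\Z$. Right exactness only shows that $V_G$ is generated by $[\psi]$ and $[\delta]$. Which piece survives is decided by the extension data, not by the graded pieces. The matrices $\operatorname{diag}(1,A_i,1)$ satisfy every hypothesis you invoke: a $G$-stable filtration with graded pieces $\Z,H,\Z$, $H^A=H_A=0$, $\det A=1$, and even the product and cube relations. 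Yet they have $V^G=\Z\psi\oplus\Z\delta$ and $V_G=\Z[\psi]\oplus\Z[\delta]$.

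\textbf{How to close it.} You need a computation with the actual off-diagonal entries, which is what the paper's proof does with $T_\pm$ and $T_2$.
\begin{itemize}
\item Degree one: $(T_--I)\delta=-\psi$, while $T_--I$ maps $F_2$ into $\Z u$. Hence $[\psi]=0$. Also the $\psi$-coefficient of $(T_--I)(n\delta+y)$, for $y\in F_2$, is $-n$, so no nonzero multiple of $\delta$ lifts to an invariant.
\item Degree three: $(T_--I)(u\wedge w\wedge\delta)=-\psi\wedge u\wedge w$. Meanwhile $T_--I$ maps the span of $\psi\wedge u\wedge w$, $\psi\wedge u\wedge\delta$, $\psi\wedge w\wedge\delta$ into $\Z\,\psi\wedge u\wedge\delta$. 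The same argument then settles this degree.
\end{itemize}

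\textbf{Degree two has the same issue.} Your computation $(T_2-I)(\psi\wedge\delta)=\psi\wedge u$ does not exclude an invariant of the form $\psi\wedge\delta+\psi\wedge h$ with $h\in H$. Indeed, $\psi\wedge\delta-\tfrac12\psi\wedge w$ is rationally $T_2$-invariant. You must use that $T_\pm-I$ kill both $u\wedge w$ and $\psi\wedge\delta$. This forces $h$ to be $A_\pm$-invariant, hence zero. Then $(T_2-I)(\alpha\,u\wedge w+\beta\,\psi\wedge\delta)=(\beta-6\alpha)\,\psi\wedge u$, which gives $\beta=6\alpha$. The $(r_1,r_2)$ computation in the Jacobi lemma is not the right source for this.

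\textbf{Your coboundary.} In $(T_2-I)(w\wedge\delta)=6\psi\wedge\delta-u\wedge w+2u\wedge\delta-3\psi\wedge w$, the class $[u\wedge\delta]$ lies in the top piece $H\wedge\delta$. Right exactness only places it in the span of $[\psi\wedge\delta]$ and $[u\wedge w]$; it does not vanish for filtration reasons. It vanishes by $(T_+-I)(u\wedge\delta)=u\wedge\delta-w\wedge\delta$ and $(T_--I)(w\wedge\delta)=u\wedge\delta+\psi\wedge u+\psi\wedge w$. These also use $[\psi\wedge u]=[\psi\wedge w]=0$, which your filtration does give.

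\textbf{A small slip.} The $F_2$-block of $T_i$ is not $\operatorname{diag}(1,A)$; for example, $T_*w=-6\psi-u$. The class $\phi$ is still invariant, because those $\psi$-components die after wedging with $\psi$.
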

\begin{proof}
Use lexicographic exterior bases induced by
\((\psi,u,w,\delta)\). In degree one, the columns of
\(T_--I\) kill \([u]\) and \([\psi]\), and those of
\(T_+-I\) kill \([w]\). No relation involves a nonzero multiple
of \([\delta]\). The same two matrices force a common invariant
to be a multiple of \(\psi\).

In degree two, write the exterior basis as
\[
(\psi u,\psi w,\psi\delta,uw,u\delta,w\delta),
\]
temporarily suppressing wedge signs. The \(T_-\) and \(T_+\)
relations kill the first, second, fifth, and sixth classes. The
\(T_2\) relations give \([uw]=6[\psi\delta]\), and \(T_*\)
adds no further relation. Thus the quotient is free of rank one.
The invariant equations force the same four coordinates to vanish
and the third coordinate to be six times the fourth, giving
\(\Z\xi\).

In degree three, \(T_-\) and \(T_+\) force the last three
coordinates of a common invariant to vanish. Their coinvariant
relations, together with those of \(T_2\), kill the first three
basis classes integrally. The fourth remains free. Finally,
\(\det T_i=1\), which settles degree four.
\end{proof}

\begin{lemma}\label{hom:finite-specialization}
At the multiple fiber, specialization into the invariant lattice
has indices \(3,1,1,3\) in degrees \(1,2,3,4\), respectively.
All the source groups are torsion-free.
\end{lemma}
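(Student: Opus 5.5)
Since Proposition~\ref{fill:log} identifies specialization at the multiple fiber with $p^*$ for the unramified triple cover $p:\mathscr A'_0\to B$, I would compute the image of $p^*:H^q(B;\Z)\to H^q(\mathscr A'_0;\Z)^{A}=V^{G_*}$ degree by degree. Here $V^{G_*}$ is the $T_*$-invariant lattice. The claimed indices are measured against the generators $\psi,\xi,\phi,\nu$ of Lemma~\ref{hom:lattices}, but the computation itself takes place in the $T_*$-invariant lattice.

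The working model is the Bieberbach group $\Gamma=\langle\Lambda,b\mid b\eta b^{-1}=A\eta,\ b^3=\ell\rangle$ of $B$, as used in Lemma~\ref{fill:boundary}. The homomorphism $\chi:\Gamma\to\Z$ with $\chi(\eta)=3\psi(\eta)$ and $\chi(b)=1$ is well defined, because $\chi(b^3)=3=\chi(\ell)$ and $\psi$ is $A$-invariant. Its kernel $\Lambda_0$ is a rank-three lattice, generated by $\ker\psi$ together with $b\,\eta_0$ for any $\eta_0$ with $3\psi(\eta_0)=-1$. (If no such $\eta_0$ exists, I would instead use the class $\ell-3\eta_1$.)

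This exhibits $B$ as the mapping torus of an affine automorphism $M$ of $T^3=\R^3/\Lambda_0$. The Wang sequence then gives
\[
 0\to\coker\bigl(M^*-I\mid H^{q-1}(T^3)\bigr)\to H^q(B;\Z)\to\ker\bigl(M^*-I\mid H^q(T^3)\bigr)\to0.
\]
From the explicit matrices $T_*$ in \eqref{mon:matrices} and $\ell=\widehat\psi-2\widehat u-4\widehat w$, I would write down $M$ on $\Lambda_0$ in a concrete basis. I would then check that each cokernel is free, which gives torsion-freeness of the source groups. Ranks are $(1,1,2,1,1)$ in degrees $0$--$4$, matching a bielliptic surface, or rather its rational cohomology.

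Degrees one and four are direct.
\begin{itemize}[leftmargin=*]
\item Degree one: $H^1(B;\Z)=\operatorname{Hom}(\Gamma,\Z)$. A homomorphism restricting to $k\psi$ on $\Lambda$ must send $b$ to $x$ with $3x=k\psi(\ell)=k$. Hence the image is $3\Z\psi$, and the index is $3$.
\item Degree four: $p$ is an orientation-preserving covering of degree three, so $p^*[B]^*=3\nu$, and the index is again $3$.
\end{itemize}

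For degrees two and three I would use Poincaré duality. By duality $p^*$ in degree $q$ is the transpose of the Gysin map $p_*$ in degree $4-q$, with $p_*p^*=3$ and $p^*p_*=1+\rho^*+\rho^{*2}$. I would pair the images with the degree-one and degree-four information through cup products, and compare with the invariant pairings.

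In degree two, I would produce the class explicitly. The class $\xi$ is the Appell--Humbert form of a $\rho$-invariant line bundle on $\mathscr A'_0$, namely the one coming from the invariant polarization. I would show that this bundle descends to $B$, by checking that the linearization cocycle is trivial on $\widetilde\rho$ up to a character. That would give $\xi\in\operatorname{im}p^*$, hence index $1$. As a cross-check, I would use the $(1,1)$ pairing to see that the Gysin image of $\xi$ is primitive.

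In degree three, I would show that $\phi=\xi\wedge\psi$ lies in the image by a different route. It is not enough to use $p^*(\text{lift of }\xi)\smile p^*(\text{lift of }3\psi)$, since this only gives $3\phi$. Instead I would read off $\phi$ from the Wang sequence. The $\coker$ term in degree three comes from $H^2(T^3)$. Restricted to the fiber $T^3$, $\phi$ is the class $u\wedge w$ modulo $\ker\psi$-directions. I would then verify that the corresponding class is not divisible, again using the explicit $M$.

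The main obstacle I expect is this explicit bookkeeping for $M$ on $\Lambda_0$, together with the integrality of the $\coker$ terms in degrees two and three. That is where the choice $\ell=\widehat\psi-2\widehat u-4\widehat w$ enters, and where an index of $3$ versus $1$ in the middle degrees is decided. I would first try to choose a basis of $\Lambda_0$ adapted to $\ker\psi$ and to the order-three block $A_*$, so that $M$ becomes block triangular with a translation column.
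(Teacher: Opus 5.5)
Your plan has a genuine gap: you compute in the wrong lattice. The local invariant lattice $H^q(\mathscr A'_0;\Z)^{A}$ is not spanned by the global classes $\psi,\xi,\phi,\nu$ of Lemma~\ref{hom:lattices}; it has rank two in degrees $1,2,3$.

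\begin{itemize}
\item Solving $T_*x=x$ gives $V^{T_*}=\Z\psi\oplus\Z h$ with $h=-2u+w-3\delta$.
\item In degree two the invariant lattice is $\Z(\psi\wedge h)\oplus\Z\xi$.
\item In degree three it is spanned by $\phi$ and $c=-4\,\psi\wedge u\wedge\delta+2\,\psi\wedge w\wedge\delta+u\wedge w\wedge\delta$.
\end{itemize}

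Correspondingly $B$ has Betti numbers $(1,2,2,2,1)$, not $(1,1,2,1,1)$. Your ranks would give Euler characteristic $2$, which is impossible for a free quotient of a torus. The lemma is about this full local lattice, and that is how it is used. Proposition~\ref{hom:leray-page} needs the cokernels of specialization at infinity to be exactly $\Z/3,0,0,\Z/3$. Only then can it write $\mathscr H^q=j_*\bigwedge^qV$ for $q=2,3$ and the two skyscraper sequences.

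Consequently your degree-two and degree-three steps do not give index one. Showing $\xi\in\im p^*$ by descending a line bundle, or $\phi\in\im p^*$ from the Wang sequence, is necessary but not sufficient. For example, $\Z(3\,\psi\wedge h)\oplus\Z\xi$ contains $\xi$ and has index three. Cup products of pulled-back degree-one classes only land in $3\Z\,\psi\wedge h$.

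Degree one has the same defect. The restriction of a homomorphism $\Gamma\to\Z$ to $\Lambda$ lies in $\Z\psi\oplus\Z h$, not in $\Z\psi$. The image is $\{f:3\mid f(\ell)\}=\Z(3\psi)\oplus\Z h$. It has index three because $\psi(\ell)=1$, and $h$ itself lies in it since $h(\ell)=0$.

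The missing ingredient is the discriminant comparison. You mention it, but then replace it by single-class checks. Once $H^*(B;\Z)$ is torsion-free, its cup pairings are unimodular, and $p^*$ multiplies them by $\deg p=3$.

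\begin{itemize}
\item Degree two: the pulled-back lattice has Gram matrix $3G_B$ with $G_B$ unimodular of rank two, so its determinant is $\pm9$. The Gram matrix of $(\psi\wedge h,\xi)$ is $\left(\begin{smallmatrix}0&-3\\-3&12\end{smallmatrix}\right)$, of determinant $-9$. Since determinants scale by the square of the index, the index is one.
\item Degree three: the pairing of $(\phi,c)$ against $(\psi,h)$ has determinant $\pm3$. Comparing with the pulled-back pairing gives $3I_1I_3=9$, so $I_3=1$ because $I_1=3$.
\end{itemize}

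This is the paper's argument. The paper also gets torsion-freeness more cheaply than checking each Wang cokernel. It uses $H_1(B;\Z)=(\Lambda_A\oplus\Z s)/\langle3s-[\ell]\rangle\simeq\Z^2$, the vanishing Euler characteristic, and Poincar\'e duality with universal coefficients.
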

\begin{proof}
Let \(p:\mathscr A'_0\to B\) be the degree-three quotient map in
Proposition~\ref{fill:log}, which identifies specialization with
\(p^*\). The cohomology of the multiple fiber is that of its
reduction \(B\).

Put \(A=\widehat T_*=(T_*^{-1})^{\mathsf t}\). Direct calculation gives
\[
 V^{T_*}=\Z\psi\oplus\Z h,
 \quad h=-2u+w-3\delta,
 \quad \Lambda/(A-I)\Lambda\simeq\Z^2,
\]
and \(h(\ell)=0\). The fundamental group of the free affine
quotient has presentation
\[
 \langle\Lambda,s\mid[\Lambda,\Lambda]=1,
 \ sxs^{-1}=Ax,\ s^3=\ell\rangle.
\]
Its abelianization is
\[
 H_1(B;\Z)=
 \frac{\Lambda_A\oplus\Z s}{\langle3s-[\ell]\rangle}
 \simeq\Z^2.
\]
Here \([\ell]\) is primitive because \(\psi(\ell)=1\).
The Euler characteristic of \(B\) is zero, as it is a finite
free quotient of a four-torus. Poincar\'e duality and the universal
coefficient theorem give \(b_2(B)=2\) and show that its integral
cohomology is torsion-free in every degree.

A covector \(a\psi+bh\) descends precisely when its value on
\(\ell\) is divisible by three. Hence
\[
 p^*H^1(B;\Z)=\Z(3\psi)\oplus\Z h.
\]
In degree two the invariant basis \((\psi h,\xi)\) has
intersection matrix
\[
 \begin{pmatrix}0&-3\\-3&12\end{pmatrix}.
\]
The pullback of the unimodular rank-two intersection lattice of
\(B\) has determinant of absolute value \(3^2=9\). This equals
the determinant of the full invariant lattice, so the index is one.

In degree three an invariant basis is
\[
 \phi,\qquad
 \chi=-4\psi u\delta+2\psi w\delta+uw\delta.
\]
Its pairing with \((\psi,h)\) is
\(
\left(\begin{smallmatrix}0&1\\3&0\end{smallmatrix}\right)
\).
If \(I_1,I_3\) are the two pullback indices, comparison with the
downstairs perfect pairing gives \(I_1I_3=3\). Since \(I_1=3\),
we have \(I_3=1\). In top degree a degree-three oriented covering
pulls back by multiplication by three.
\end{proof}

\begin{lemma}\label{hom:parabolic}
For \(q=1,2,3\),
\[
 H^1(\PP^1,j_*\bigwedge^qV)=0
\]
with integral coefficients.
\end{lemma}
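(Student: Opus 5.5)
We compute $H^1(\PP^1,j_*\bigwedge^qV)$ through the standard group-cohomological description of a local system on a punctured sphere. Write $\Gamma=\pi_1(U)$, freely generated by the four meridians subject to the single sphere relation, and $W=\bigwedge^qV$ with the induced action $\bigwedge^qT_i$. Then $H^1(U,W)$ is the group of crossed homomorphisms modulo principal ones. The subgroup $H^1(\PP^1,j_*W)$ is the parabolic part: the classes whose restriction to each meridian is trivial in $H^1(\langle m_i\rangle,W)=W/(T_i-I)W$. These are exactly the cocycles $(k_*,k_2,k_+,k_-)$ with $k_i\in\im(T_i-I)$ on $W$, satisfying the twisted sphere relation
\[
 k_*+T_*k_2+T_*T_2k_++T_*T_2T_+k_-=0,
\]
taken modulo coboundaries $k_i=(I-T_i)r$. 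This is the same reduction used in the proof of Lemma~\ref{mon:normal-lemma} for the elliptic local system.

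Writing $k_i=(T_i-I)x_i$, the parabolic cohomology is the homology of the complex
\[
 W\xrightarrow{\ \alpha\ }\bigoplus_i W/W^{T_i}\xrightarrow{\ \beta\ }W.
\]
Here $\alpha(r)=((I-T_i)r)_i$, and $\beta$ is the relation map above. We must show $\ker\beta=\im\alpha$ integrally.

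We first check ranks. The Euler characteristic gives
\[
 \rk H^1_{\mathrm{par}}=\sum_i\rk(T_i-I)-\rk W-\rk W+\rk W^G+\rk W_G.
\]
The surjectivity defect of $\beta$ is measured by $W_G$, and the kernel of $\alpha$ is $W^G$. By Lemma~\ref{hom:lattices}, $W^G$ and $W_G$ have rank one. The ranks of $T_i-I$ on $\bigwedge^qV$ follow from \eqref{mon:matrices}: for $T_+$, $T_2$ and $T_-$ they are unipotent of rank $2,2,2$ on $V$ (the $I_1$ twists being of "rank" $1$ and $2$), and $T_*$ has order three, so its rank equals $\rk W-\rk W^{T_*}$. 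For $q=1$ the rank-one local monodromies, namely rank $(T_+-I)=1$, rank $(T_2-I)=1$ (forced in Proposition~\ref{mon:representation}), rank $(T_--I)=2$ and rank $(T_*-I)=2$, sum to $6$. This gives $\rk H^1=6-4-4+1+1\cdot$(correction) $=0$ once the $H^2$ term equals $\rk W_G=1$. The analogous counts for $q=2,3$ are done with the exterior powers, and for $q=3$ one can use duality $\bigwedge^3V\simeq(\bigwedge^1V)^\vee$ via $\nu$, reducing it to $q=1$. So the real content is torsion-freeness.

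For the integral statement we exhibit $\ker\beta$ directly. We use the explicit generators of $\im(\widehat T_i-I)$ (or $\im(T_i-I)$) computed as in Lemma~\ref{pi:coinvariants}. Then we solve the sphere relation over $\Z$ and reduce by coboundaries, as in the $(m,n,p)$ computation \eqref{mon:normal-lattice}. For $q=1$ the cocycle lattice has small rank, and we show that every solution is a coboundary by eliminating $k_2,k_+,k_-$ with the integral column operations, checking that the resulting index is one. The parabolic $H^1$ is torsion-free automatically, since it injects into $H^1(U,W)$, which is free for a free group of rank three acting on a free module. So vanishing of its rank suffices.

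That observation is the key simplification: $U$ is homotopic to a wedge of three circles, so $H^1(U,W)$ is torsion-free, hence so is its subgroup $H^1(\PP^1,j_*W)$. The lemma therefore reduces to a rational rank computation, namely
\[
 \dim H^1_{\mathrm{par}}=\sum_i\rk(T_i-I)-2\rk W+\rk W^G+\rk W_G=0
\]
for $q=1,2,3$. The main obstacle is getting the local ranks $\rk(\bigwedge^qT_i-I)$ right for $q=2$. I would compute them from Jordan forms: $T_\pm,T_2$ are transvection-like unipotents and $T_*$ has eigenvalues $1,1,\zeta_3,\zeta_3^{-1}$. I would then check that the sum equals $2\binom4q-2$.
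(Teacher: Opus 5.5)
Your set-up matches the paper's. The complex $W\to\bigoplus_i W/W^{T_i}\to W$ is isomorphic to \eqref{hom:parabolic-complex}, and the Euler-characteristic count with target $\sum_i\rk(\bigwedge^qT_i-I)=2\binom{4}{q}-2$ is the same. The injection $H^1(\PP^1,j_*W)\hookrightarrow H^1(U,W)$ from the Leray sequence of $j$ is also fine.

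The gap is the step you call the key simplification. It is false that $H^1(U,W)$ is torsion-free merely because $\pi_1(U)$ is free and $W$ is a free $\Z$-module. For a free group on $g_1,\dots,g_n$ one has
\[
H^1(F_n;W)=W^{\oplus n}/\{((g_i-1)x)_i:x\in W\}.
\]
This is a cokernel of a map of free modules, and it has torsion whenever that image is not saturated. Already for $n=1$ the paper gives examples. The $IV^*$ monodromy gives $\coker(A_*-I)\simeq\Z/3$, used in the proof of Lemma~\ref{mon:normal-lemma}. The $I_2$ monodromy gives $\coker(A_2-I)\simeq\Z\oplus\Z/2$. So your rational rank count does not imply integral vanishing. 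The torsion-freeness that you yourself called ``the real content'' is left unproved: for $q=2,3$ you offer nothing else, and for $q=1$ the elimination is only announced.

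What is missing is an integral saturation check on the stacked coboundary matrix $D_q$, whose blocks are $\bigwedge^qT_i-I$. Its nonzero elementary divisors must all equal one, for example via a maximal-rank minor equal to $\pm1$. The paper exhibits such minors, of sizes $3,5,3$. It then embeds $\ker d^1/\im d^0$ into the torsion-free group $(\bigwedge^qV)^{\oplus4}/\im D_q$.

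With that input your route also closes. The paper's minors use only rows from the $T_*$ and $T_2$ blocks. The map $x\mapsto((\bigwedge^qT_i-I)x)_{i=*,2,+}$ still has rank $\binom{4}{q}-1$, since $T_-$ is determined by the other three. So this map into $W^{\oplus3}$ also has saturated image, and $H^1(U,W)$ is indeed torsion-free here. But that is a property of these particular matrices, not a consequence of freeness of $\pi_1(U)$.

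Smaller points:
\begin{itemize}
\item Your first list of local ranks ($2,2,2$ for $T_+,T_2,T_-$) contradicts the correct $(1,1,2)$ you use afterwards.
\item The $q=2$ local ranks $(4,2,2,2)$ are never actually computed.
\item $\bigwedge^3V\simeq V^\vee$ is the dual local system, not $V$. Reducing $q=3$ to $q=1$ therefore needs duality of rational parabolic cohomology, not an identification with $V$.
\end{itemize}
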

\begin{proof}
For an integral local system \(L\), the parabolic complex is
\begin{equation}
 L\xrightarrow{d^0}\bigoplus_i(T_i-I)L
   \xrightarrow{d^1}L,
 \label{hom:parabolic-complex}
\end{equation}
where
\begin{align*}
 d^0x&=((T_i-I)x)_i,\\
 d^1(y_*,y_2,y_+,y_-)
 &=y_*+T_*y_2+T_*T_2y_++T_*T_2T_+y_-.
\end{align*}
Since \(d^1d^0=0\), this is a complex. Its middle cohomology is
\(H^1(\PP^1,j_*L)\); its last cokernel is the coinvariant group.
To see this, cut the punctured sphere
along its marked meridians and attach the four discs with their
invariant stalks.

For \(L=\bigwedge^qV\), the ranks of the four local images are
\[
 (2,1,1,2),\quad(4,2,2,2),\quad(2,1,1,2)
\]
in degrees \(1,2,3\). Lemma~\ref{hom:lattices} gives ranks
\(3,5,3\) for both \(d^0\) and \(d^1\). The middle cohomology
is therefore rationally zero.

To exclude torsion, form the stacked integer matrix
\[
 D_q=\begin{pmatrix}
 \bigwedge^qT_*-I\\\bigwedge^qT_2-I\\
 \bigwedge^qT_+-I\\\bigwedge^qT_--I
 \end{pmatrix}.
\]
Write $D_q[R,C]$ for the submatrix with row set $R$ and column
set $C$, using one-based indices in the lexicographic exterior bases.
The following maximal-rank minors are units:
\[
\begin{aligned}
 \det D_1[\{2,3,6\},\{2,3,4\}]&=1,\\
 \det D_2[\{1,2,4,6,7\},\{1,2,3,5,6\}]&=-1,\\
 \det D_3[\{1,3,5\},\{2,3,4\}]&=-1.
\end{aligned}
\]
Thus \(\im D_q\) is saturated even in the ambient direct sum
\((\bigwedge^qV)^{\oplus4}\). The quotient of the middle lattice
in \eqref{hom:parabolic-complex} by \(\im d^0\) embeds in this
torsion-free ambient quotient. Its subgroup
\(\ker d^1/\im d^0\) is torsion-free and rationally zero, hence zero.
\end{proof}

\begin{proposition}\label{hom:leray-page}
Let \(\mathscr H^q=R^qf_*\Z\). The integral Leray page is
\begin{equation}
\begin{array}{c|c|c|c}
q&H^0(\PP^1,\mathscr H^q)&H^1(\PP^1,\mathscr H^q)
 &H^2(\PP^1,\mathscr H^q)\\ \hline
4&3\Z\nu&0&\Z\omega[\nu]\\
3&\Z\phi&0&\Z\omega[u\wedge w\wedge\delta]\\
2&\Z\xi&0&\Z\omega[\psi\wedge\delta]\\
1&3\Z\psi&0&\Z\omega[\delta]\\
0&\Z&0&\Z\omega.
\end{array}
\label{hom:page}
\end{equation}
There are no further columns.
\end{proposition}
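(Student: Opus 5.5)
The plan is to identify the sheaves $\mathscr H^q=R^qf_*\Z$ and then compute their cohomology column by column.

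\textbf{Step 1: local structure of $\mathscr H^q$.} Over $U$ the sheaf $\mathscr H^q$ is the local system $\bigwedge^qV$. At each special point its stalk is $H^q$ of the local filling, which deformation retracts onto the central fiber. By proper base change, the stalk map into nearby sections is the specialization map. Its image lies in the local invariants. The stalk maps are as follows.
\begin{itemize}
\item At $0$ and $\alpha_+$, Proposition~\ref{fill:rankone} shows specialization is an isomorphism onto the invariants.
\item At $\alpha_-$, Proposition~\ref{fill:a2} gives the same conclusion.
\item At $\infty$, Lemma~\ref{hom:finite-specialization} shows specialization is injective, since its source is torsion-free of the same rank. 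Its image has index $3,1,1,3$ in degrees $1,2,3,4$.
\end{itemize}
So $\mathscr H^q$ agrees with $j_*\bigwedge^qV$ at the three multiplicative points. At $\infty$ it is the subsheaf whose stalk is the finite-index sublattice $p^*H^q(B)\subset(\bigwedge^qV)^{T_*}$. This gives a short exact sequence
\[
0\to\mathscr H^q\to j_*\textstyle\bigwedge^qV\to Q^q_\infty\to0,
\]
with $Q^q_\infty$ a skyscraper at $\infty$ of order $3,1,1,3$.

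\textbf{Step 2: global sections (column $H^0$).} Here $H^0(\PP^1,j_*\bigwedge^qV)$ is the global invariant group, generated by $1,\psi,\xi,\phi,\nu$ by Lemma~\ref{hom:lattices}. A global section of $\mathscr H^q$ is a global invariant whose germ at $\infty$ lies in $p^*H^q(B)$. In degree one, $\psi$ pairs with $\ell$ to $1$, so the descent criterion from the proof of Lemma~\ref{hom:finite-specialization} gives $3\Z\psi$. In degree four the index is $3$, so we get $3\Z\nu$. In degrees two and three the index is one, so we get $\Z\xi$ and $\Z\phi$.

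\textbf{Step 3: columns $H^1$ and $H^2$.} The long exact sequence of Step 1 reads
\[
0\to H^0(\mathscr H^q)\to H^0(j_*)\to Q^q_\infty\to H^1(\mathscr H^q)\to H^1(j_*)\to0,
\]
and it also gives $H^2(\mathscr H^q)\simeq H^2(j_*\bigwedge^qV)$. Lemma~\ref{hom:parabolic} gives $H^1(j_*)=0$ for $q=1,2,3$. For $q=0$ and $q=4$ the system is trivial, so $H^1(\PP^1,\Z)=0$. Hence $H^1(\mathscr H^q)$ is the cokernel of $H^0(j_*)\to Q^q_\infty$. It vanishes where $Q^q_\infty=0$. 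In degrees $1$ and $4$ it vanishes because the generators $\psi$ and $\nu$ map to generators of $Q\simeq\Z/3$ (Step 2). For $H^2$, the parabolic complex \eqref{hom:parabolic-complex} identifies $H^2(\PP^1,j_*L)$ with the coinvariants. Lemma~\ref{hom:lattices} gives the listed generators times $\omega$. No further columns occur because the base has real dimension two.

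\textbf{Main obstacle.} The main obstacle is Step 1: justifying that $\mathscr H^q$ is the subsheaf of $j_*$ with exactly those stalks. This needs three facts. First, the stalk is the cohomology of the central fiber, using properness and the local retractions. Second, the stalk-to-generic map is the specialization map identified in the local propositions. Third, the comparison is integral, with no torsion or kernel. I would handle this by using the torsion-freeness of the source groups together with injectivity, which holds because specialization is an isomorphism after tensoring with $\mathbb Q$.
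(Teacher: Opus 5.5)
Your proposal is correct and follows the paper's proof essentially step for step. Both arguments identify $\mathscr H^q$ as the subsheaf of $j_*\bigwedge^qV$ given by the specialization maps: isomorphisms at the three Mumford points by Propositions~\ref{fill:rankone} and~\ref{fill:a2}, and injective of index $3,1,1,3$ at $\infty$ by Lemma~\ref{hom:finite-specialization}. The table then follows from the resulting $\Z/3$ skyscraper sequences together with Lemmas~\ref{hom:lattices} and~\ref{hom:parabolic}. Your explicit long exact sequence, and your check that $\psi$ and $\nu$ map to generators of the $\Z/3$ cokernels, simply spell out the paper's remark that these sequences ``create no new $H^1$.''
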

\begin{proof}
Proper base change identifies the stalk of \(\mathscr H^q\) at a
special value with the cohomology of the reduced fiber. The natural
map to \(j_*\bigwedge^qV\) is the specialization map. At the three
Mumford fibers it is an integral isomorphism by
Propositions~\ref{fill:rankone} and~\ref{fill:a2}. At infinity it
is the injective pullback
of Lemma~\ref{hom:finite-specialization}. Thus, writing
\(K=(\Z/3)_\infty\) for the skyscraper sheaf, we have
\begin{align*}
 \mathscr H^0&=\Z,\\
 0\longrightarrow\mathscr H^1&\longrightarrow j_*V
       \longrightarrow K\longrightarrow0,\\
 \mathscr H^q&=j_*\bigwedge^qV\quad(q=2,3),\\
 0\longrightarrow\mathscr H^4&\longrightarrow\Z\nu
       \longrightarrow K\longrightarrow0.
\end{align*}
There are no torsion kernels supported at a special point. The
global sections \(\psi\) and \(\nu\) map onto the two copies
of \(K\). Hence these exact sequences create no new \(H^1\), and
replace the invariant generators only by \(3\psi\) and
\(3\nu\). Lemmas~\ref{hom:lattices} and \ref{hom:parabolic}
give the table. These constructible sheaves have no cohomology above degree two
on the base.
\end{proof}

\begin{lemma}\label{hom:clutch}
For a class \(a\in H^0(\PP^1,\mathscr H^q)\), the clutching map
of the logarithmic transformation gives
\begin{equation}
 d_2(a)=\frac{\omega}{3}[\iota_\ell a],
 \label{hom:contraction}
\end{equation}
after a common choice of sign. Division by three is taken in the
indicated coinvariant lattice.
\end{lemma}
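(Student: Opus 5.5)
The plan is to compute the Leray differential $d_2:H^0(\PP^1,\mathscr H^q)\to H^2(\PP^1,\mathscr H^{q-1})$ by localizing it at infinity. Away from infinity the family is the torus bundle $X_U$ together with the three Mumford fillings. Proposition~\ref{hom:leray-page} shows that $H^1(\PP^1,\mathscr H^\bullet)=0$. First I would cover $\PP^1$ by a closed disc $\Delta_\infty$ and its complement $D'$. Over $D'$ I would use the distinguished section of Proposition~\ref{init:section}, which extends across the three Mumford fillings. It gives a section of $f$ over $D'$, and the torus group structure on $X_U$ lets fiber classes be transported by translations. Over the annulus, the two local trivializations of the fiber bundle differ by the clutching map of the logarithmic transformation. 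By Proposition~\ref{fill:log} and \eqref{fill:log-conjugacy}, after composing with the normalized comparison \eqref{fill:normalized-comparison}, this clutching is the fiberwise translation $h_r=\operatorname{trans}_{\sigma(r)}$ with $\sigma(r)=\frac{\log r}{2\pi i}\ell$, together with the linear identification $\rho$. The key point is that Proposition~\ref{fill:good} preserves the distinguished section, so no other translation enters.

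Second, I would use the standard fact that $d_2$ in a Mayer--Vietoris/Leray description is the obstruction to extending a global invariant class compatibly across the gluing. For a torus bundle over a circle whose clutching is a family of translations $\operatorname{trans}_{\gamma(\theta)}$, the induced map on the cohomology of the fiber times $S^1$ is computed as follows. Pulling back a fiber class $a$ under the map $(\theta,x)\mapsto(\theta,x+\gamma(\theta))$ gives $a+d\theta\wedge\iota_{\dot\gamma}a$. This follows from the Künneth decomposition: translation is homotopic to the identity on each fiber, and the homotopy's trace along the loop contributes contraction with the swept homology class $\dot\gamma$. Here the loop $\theta\mapsto\sigma(\zeta_3^{-1}e^{i\theta})$ must be taken on the base $s$-circle. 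The positive $s$-meridian lifts to one third of the $r$-circle, and the extra $A$-twist is invariant on $a$. So the swept class is $\ell/3$, giving the correction $d\theta\wedge\iota_{\ell/3}a$. The connecting homomorphism of Mayer--Vietoris sends $d\theta\otimes b$ on the annulus to $\omega\otimes[b]$ in $H^2(\PP^1,\mathscr H^{q-1})$, the coinvariant term from Lemma~\ref{hom:lattices}. This yields \eqref{hom:contraction}, with the sign fixed by orientation conventions.

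Third, I would make sense of the division by three. The class $a$ is a global section of $\mathscr H^q$. For $q=1,4$ this means $a\in3\Z\psi$ or $3\Z\nu$, and for $q=2,3$ the specialization index is one. In each case $a$ comes from $B$ through $p^*$ (Lemma~\ref{hom:finite-specialization}), so $\iota_\ell a$ is divisible by three in the coinvariant lattice. A cleaner route is to compute on the cubic cover $\mathscr A'$, where the translation is integral by $\ell$ per $r$-loop, and then descend through the degree-three map $s=r^3$. Along the base this map multiplies $\omega$ by three, which produces the factor $1/3$.

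The main obstacle is the second step. Making the Mayer--Vietoris identification of $d_2$ rigorous with integral coefficients is delicate. So is checking that the multiple fiber does not alter the clutching contribution beyond the $1/3$ factor. I would first try to handle this by replacing $\Delta_\infty$ with the orbifold quotient of $\mathscr A'$ and computing equivariantly.
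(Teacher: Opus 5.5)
Your proposal is essentially the paper's proof. The distinguished section, extended over the three Mumford fillings and preserved by the normalized comparison \eqref{fill:normalized-comparison}, removes every translation cocycle except $\sigma(r)=\frac{\log r}{2\pi i}\ell$. Its change by $\ell/3$ over a positive $s$-turn then yields the contraction $\iota_{\ell/3}a$ through the prism homotopy (your $d\theta\wedge\iota_{\dot\gamma}a$ term), and the overlap circle carries this to $\omega$.

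One inference is not justified as stated: you claim that $a\in p^*H^q(B)$ by itself makes $[\iota_\ell a]$ divisible by three. The paper instead checks integrality directly on the generators in Proposition~\ref{hom:differentials}. It also follows from your real computation, because $H^2(\PP^1,\mathscr H^{q-1})$ is free, so the integral $d_2$ is determined by its real value.
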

\begin{proof}
On the cubic cover, the logarithmic transformation is identified
with the original family by translation through
\(\sigma(r)=(\log r)/(2\pi i)\,\ell\). A positive turn in
\(s=r^3\) changes this translation by \(\ell/3\), modulo periods.
In the filtered \v Cech complex with fiber cochains, the prism
homotopy for a translation through a cycle \(z\) lowers exterior
degree by one and induces contraction \(\iota_z\). The change of
this homotopy around the overlap circle represents the Leray
transgression. Pairing the oriented disc with the base fundamental
class gives \eqref{hom:contraction}.

The three Mumford identifications preserve the extended section,
so their affine translation cocycles vanish. At good reduction the
comparison was normalized to preserve the same section; thus no
additional rigidification term has been omitted. Integrality for
the actual generators in \eqref{hom:page} is verified below.
\end{proof}

\begin{proposition}\label{hom:differentials}
The four nonzero differentials are
\begin{equation}
\begin{aligned}
 d_2(3\psi)&=\omega,&
 d_2(\xi)&=2\omega[\delta],\\
 d_2(\phi)&=2\omega[\psi\wedge\delta],&
 d_2(3\nu)&=\omega[u\wedge w\wedge\delta].
\end{aligned}
\label{hom:four-differentials}
\end{equation}
\end{proposition}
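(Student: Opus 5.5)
The plan is to evaluate the transgression formula of Lemma~\ref{hom:clutch} on each generator of the column $H^0(\PP^1,\mathscr H^q)$ in \eqref{hom:page}. I then reduce the results in the coinvariant lattices of Lemma~\ref{hom:lattices}. Since the column $H^1$ vanishes identically (Lemma~\ref{hom:parabolic} together with Proposition~\ref{hom:leray-page}), the only possible differentials are $d_2:E_2^{0,q}\to E_2^{2,q-1}$ for $q=1,2,3,4$, and $d_2$ vanishes on $E_2^{0,0}$ trivially. So there are exactly four maps to compute.

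The computation uses $\ell=\widehat\psi-2\widehat u-4\widehat w$ from \eqref{main:twist}, so that $\psi(\ell)=1$, $u(\ell)=-2$, $w(\ell)=-4$ and $\delta(\ell)=0$. I will use the derivation rule for $\iota_\ell$ on $\bigwedge^*V$.
\begin{itemize}
\item Degree one: $\iota_\ell(3\psi)=3$, so $d_2(3\psi)=\omega$.
\item Degree two: $\iota_\ell(u\wedge w)=u(\ell)w-w(\ell)u=4u-2w$ and $\iota_\ell(6\psi\wedge\delta)=6\delta$. Hence $\iota_\ell\xi=4u-2w+6\delta$. Since $[u]=[w]=0$ in the degree-one coinvariants, this gives $6[\delta]$, and $d_2(\xi)=2\omega[\delta]$.
\item Degree three: $\iota_\ell\phi=u\wedge w+2\psi\wedge w-4\psi\wedge u$. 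In the degree-two coinvariants $[\psi u]=[\psi w]=0$ and $[u\wedge w]=6[\psi\wedge\delta]$, which yields $d_2(\phi)=2\omega[\psi\wedge\delta]$.
\item Degree four: $\iota_\ell\nu=u\wedge w\wedge\delta+2\psi\wedge w\wedge\delta-4\psi\wedge u\wedge\delta$. The proof of Lemma~\ref{hom:lattices} shows the first three lexicographic degree-three classes $\psi uw,\psi u\delta,\psi w\delta$ die integrally in coinvariants. So $[\iota_\ell\nu]=[u\wedge w\wedge\delta]$, and $d_2(3\nu)=\omega[u\wedge w\wedge\delta]$.
\end{itemize}
In each case the class $[\iota_\ell a]$ is divisible by three in the free rank-one coinvariant group. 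This confirms the integrality promised in Lemma~\ref{hom:clutch}, and the quotient is exactly the displayed generator multiple.

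Two points need care beyond the arithmetic. First, Lemma~\ref{hom:clutch} is stated for sections of $\mathscr H^q$. The generators $3\psi$ and $3\nu$ are not sections of $j_*\bigwedge^qV$ near infinity but of the subsheaves of Proposition~\ref{hom:leray-page}, so I must check that the formula is applied to them rather than to $\psi,\nu$. I will argue that the transgression is computed on the cubic cover, where the translation by $\sigma(r)$ acts on honest fiber cochains. The factor $1/3$ from one positive $s$-turn then combines with the factor $3$ in the generator, and this is exactly why the image is integral. Second, the formula uses a single global sign. I will fix it once, so that $d_2(3\psi)=+\omega$. The other three signs are then forced by the same convention, because $\iota_\ell$ is linear and the coinvariant generators are those fixed in Lemma~\ref{hom:lattices}. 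I will also note that the Mumford fillings contribute no translation terms, since their identifications preserve the extended section.

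I expect the main obstacle to be this bookkeeping at infinity. One must confirm that the index-three specialization of Lemma~\ref{hom:finite-specialization} and the $\ell/3$ clutching really combine multiplicatively in the \v Cech transgression. One must also confirm that no extra rigidification term appears; this is where the normalization \eqref{fill:normalized-comparison} enters. Once that is granted, the proposition is the four contraction computations above.
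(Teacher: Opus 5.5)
Your proposal is correct and matches the paper's proof. The paper computes the same four contractions with $\ell$, reduces them via Lemma~\ref{hom:lattices} to $1,\ 6[\delta],\ 6[\psi\wedge\delta],\ [u\wedge w\wedge\delta]$, and then applies \eqref{hom:contraction} to the generators $3\psi,\xi,\phi,3\nu$. (A minor wording slip: $3\psi$ and $3\nu$ are sections of $j_*\bigwedge^qV$; it is $\psi$ and $\nu$ that are sections of $j_*\bigwedge^qV$ but fail to be sections of $\mathscr H^q$ near infinity.)
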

\begin{proof}
Contraction with $\ell$ gives
\begin{align*}
 \iota_\ell\psi&=1,\\
 \iota_\ell\xi&=4u-2w+6\delta,\\
 \iota_\ell\phi&=uw+2\psi w-4\psi u,\\
 \iota_\ell\nu&=uw\delta+2\psi w\delta-4\psi u\delta.
\end{align*}
In the coinvariants, these become $1$, $6[\delta]$,
$6[\psi\wedge\delta]$, and $[u\wedge w\wedge\delta]$, respectively, by
Lemma~\ref{hom:lattices}. Applying
\eqref{hom:contraction} to $3\psi,\xi,\phi,3\nu$
gives the four asserted integral differentials.
\end{proof}

\begin{proof}[Proof of Theorem~\ref{main:theorem}]
The local construction gives a compact smooth complex threefold,
and Proposition~\ref{pi:trivial} gives simple connectivity. By
\eqref{hom:page} and \eqref{hom:four-differentials}, all four
differentials are injective and have cokernels
\(0,\Z/2,\Z/2,0\). There are no differentials with \(r\ge3\).
The only nonzero terms at infinity are
\[
 E_\infty^{0,0}=\Z,\quad
 E_\infty^{2,1}=\Z/2,\quad
 E_\infty^{2,2}=\Z/2,\quad
 E_\infty^{2,4}=\Z.
\]
Each total degree has a single nonzero filtration quotient, so no
extension problem remains. Therefore
\[
 H^k(X;\Z)\simeq
 \begin{cases}
 \Z,&k=0,6,\\
 \Z/2,&k=3,4,\\
 0,&k=1,2,5.
 \end{cases}
\]
Poincar\'e duality for the complex orientation,
\(H_k(X;\Z)\simeq H^{6-k}(X;\Z)\), proves
\eqref{main:homology}.
\end{proof}

\section{Topological type}\label{top:section}

Let $M_1$ be the oriented smooth six-manifold obtained by surgery on
the conic
\[
 C_0=\{[s^2:st:t^2:0]:[s:t]\in\PP^1\}\subset\PP^3.
\]
An oriented normal framing identifies a tubular neighborhood of $C_0$
with $S^2\times D^4$, and the surgery replaces it by $D^3\times S^3$:
\begin{equation}\label{top:model}
 M_1=\bigl(\PP^3\setminus\operatorname{int}(S^2\times D^4)\bigr)
       \mathop{\cup}_{S^2\times S^3}(D^3\times S^3).
\end{equation}
The boundary gluing is induced by the framing, which exists and is
unique up to homotopy. Thus \eqref{top:model} specifies one oriented
diffeomorphism type.

\begin{theorem}\label{top:theorem}
For every sufficiently small nonzero $\lambda$, there is an
orientation-preserving diffeomorphism $X_2(\lambda)\cong M_1$,
where $X_2(\lambda)$ carries its complex orientation. Moreover,
\begin{equation}\label{top:cube}
 \langle a^3,[X]_2\rangle=1,
 \qquad 0\ne a\in H^2(X;\mathbb F_2).
\end{equation}
\end{theorem}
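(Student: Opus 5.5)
The plan is to classify both manifolds by the classification of simply connected closed smooth six-manifolds with torsion homology; the relevant version is Jupp's extension of Wall's theorem, which also allows torsion in $H_3$ and non-spin manifolds. By Theorem~\ref{main:theorem}, $X=X_2(\lambda)$ is simply connected with $H^2(X;\Z)=0$, $H^3(X;\Z)=0$, $H^4(X;\Z)\simeq\Z/2$ and $H_2(X;\Z)\simeq\Z/2$. Torsion-free $H^2$ vanishes, so the invariants that remain are the following:
\begin{itemize}
\item the torsion linking form on $\operatorname{tors}H^4\times\operatorname{tors}H^3$, together with the linking data on $H_2$;
\item the second Stiefel--Whitney class $w_2\in H^2(X;\mathbb F_2)\simeq\mathbb F_2$;
\item the mod-two cubic form $a\mapsto\langle a^3,[X]_2\rangle$;
\item the first Pontryagin class, whose relevant part is zero, since $H^4$ is torsion here.
\end{itemize}
I would first compute the same invariants for $M_1$ directly from \eqref{top:model}, and then match them with those of $X$.

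For $M_1$ I would argue as follows. Removing a tubular neighbourhood of $C_0$ from $\PP^3$ leaves a complement that, by Lefschetz duality and the fact that $[C_0]=2\in H_2(\PP^3)$, has $H_2=\Z/2$. Gluing in $D^3\times S^3$ kills the meridian $S^3$ class and adds nothing in degree two. A Mayer--Vietoris computation then gives $H_2(M_1)=\Z/2=H_3(M_1)$ and $\pi_1=1$. Next, $w_2(M_1)$ restricts to the complement as $w_2(\PP^3)=0$, so $M_1$ is spin; if instead the normal framing forces nonspin, I would track this through the surgery. The mod-two cube can be computed from the Bockstein: the nonzero class $a\in H^2(M_1;\mathbb F_2)$ lifts from the generator $h$ of $H^2(\PP^3;\mathbb F_2)$, and $h^3$ evaluates to $1$ on $\PP^3$. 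Since the surgery takes place away from a generic representative of $h^3$, this gives $\langle a^3,[M_1]_2\rangle=1$.

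For $X$ I would compute the matching invariants using the fibration. The Leray page \eqref{hom:page} together with the differentials \eqref{hom:four-differentials} show that the mod-two class $a$ comes from the reduction of $\xi$, or equivalently from $H^1$ with $\mathbb F_2$ coefficients of the base. Its cube can be read from the multiplicative structure of the Leray spectral sequence with $\mathbb F_2$ coefficients, using $\xi^2=2\nu$ and $\omega\cdot\ldots$. A cleaner route, which I would try first, is to identify $a$ with the Bockstein preimage of the generator of $H^3(X;\Z)$; then $\langle a^3,[X]_2\rangle$ equals the linking self-pairing of that class, and this is computed by the transgression coefficients $(1,2,2,1)$. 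For $w_2$, I would use $c_1(X)=f^*c_1(\PP^1)+c_1(T_{X/\PP^1})$ with the correction from the multiple fiber. Since $H^2(X;\Z)=0$, $c_1(X)=0$, so $w_2(X)$ is the reduction of an integral class and is therefore zero. Thus $X$ is spin, and this matches $M_1$ provided the complement computation above holds. Finally, $p_1$ lies in the finite group $H^4=\Z/2$, and Jupp's theorem then requires a compatibility check relating $p_1$ to $w_2$ and the cube; I would verify this through the congruence $p_1\equiv\mathfrak P(w_2)+\ldots$ modulo the torsion relations.

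The main obstacle is the cube \eqref{top:cube} on $X$: it is a genuinely multiplicative mod-two invariant that the integral Leray page does not see directly. My first attempt would be to express it as a torsion linking number. The generator of $H_3(X;\Z)=\Z/2$ is represented by a fiber $3$-cycle $\iota_\ell$-dual to $\phi$, and its double bounds a $4$-chain built from the transgression of $\phi$. Intersecting that chain with the same cycle should then give $\tfrac12\bmod 1$, which would confirm the nontrivial linking form and hence the value $1$.
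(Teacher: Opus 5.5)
Your overall architecture matches the paper's: a classification theorem for simply connected spin six-manifolds with this homology, spin-ness of $X$ from $c_1(X)=0$, and comparison of the mod-two cube with that of $M_1$. But the step that carries all the content, $\langle a^3,[X]_2\rangle=1$, is not established, and the route you propose cannot establish it.

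In a closed six-manifold the torsion linking form pairs $\operatorname{tors}H_2$ with $\operatorname{tors}H_3$ (cohomologically $\operatorname{tors}H^3\times\operatorname{tors}H^4\to\mathbb Q/\Z$). There is no self-linking of a $3$-cycle, and your proposed intersection of a $4$-chain with a $3$-cycle has expected dimension $4+3-6=1$. More importantly, with $H_2=H_3=\Z/2$ the linking pairing is nonsingular by duality, hence is the unique such pairing. It is identical for both diffeomorphism types (the $\varepsilon=0$ handlebody boundary and $M_1$), so \emph{nontriviality of the linking form} says nothing about $\varepsilon$. Concretely, $\rho_2:H^4(X;\Z)\to H^4(X;\mathbb F_2)$ is an isomorphism, and duality gives $\langle a\cup\rho_2y,[X]_2\rangle=1$ for the generator $y$ no matter what. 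Thus $\varepsilon=1$ exactly when $a^2=Sq^2a\neq0$.

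That is a hidden multiplicative extension in the Leray spectral sequence. The class $a$ restricts to $\bar\xi=uw$ on a fiber and $\bar\xi^2=0$, so $a^2$ drops into positive filtration. There neither $E_\infty$-products nor the transgression coefficients $(1,2,2,1)$, which only fix the additive structure, can see it. (Also, $a$ cannot come from $H^1(\PP^1;\mathbb F_2)=0$, and $\xi^2=12\nu$, not $2\nu$.)

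The paper supplies the missing idea by $\Z/2$-equivariant localization for the fiberwise involution $v\mapsto-v$. This involution extends to $X$ with fixed set an elliptic curve $E$ over $0$ and a rational curve $C$ over $\alpha_-$, acting by $-1$ in both normal directions. A $\Z/2$-linearized line bundle on $f^{-1}(D)$, built from symmetric theta factors, has mod-two Chern class $a$, odd degree on $C$, and nontrivial character along $C$. Localization then gives contribution $0$ from $E$ and $\int_Ca|_C=1$ from $C$.

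Some secondary corrections:
\begin{itemize}
\item Jupp's theorem, like Wall's, assumes torsion-free homology and does not apply when $H_2=\Z/2$. You need Zhubr's general classification or Andrus's theorem with $n=2$, as in the paper.
\item You wrote $H^3(X;\Z)=0$, but it is $\Z/2$ (the Bockstein target of $a$).
\item The vanishing of $p_1$ does not follow from $H^4$ being torsion. It follows from spin: $p_1=2q_1\in2H^4(X;\Z)=0$.
\item For $M_1$, the conic complement has $H_2=\Z$ and $H_3=\Z/2$. The surgery kills the pushed-off conic $S^2$ (of class $2$), while the meridian $S^3$ survives and generates $H_3(M_1)$. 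Your final groups are right, but the mechanism is reversed.
\end{itemize}
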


There are exactly two oriented diffeomorphism types of closed simply
connected spin six-manifolds with homology \eqref{main:homology},
distinguished by the value of $\langle a^3,[X]_2\rangle$.
The proof is given in Appendix~\ref{app:topology}.

\appendix
\section{The elliptic surface}\label{app:elliptic}

We prove the facts used in Section~\ref{init:initial}, using the
standard theory of elliptic surfaces and the height pairing
\cite{Miranda,Shioda}.

\begin{proof}[Proof of Proposition~\ref{init:surface}]
Homogenize \eqref{init:weierstrass} with fundamental line bundle
$\cO_{\PP^1}(1)$. For $A=-3(1+t)$ and $B=2+3t+2t^2$,
\[
 \Delta=-16(4A^3+27B^2)
       =-432t^2(4t^2+8t+5).
\]
The coefficient $A$ is nonzero at the three finite zeros of $\Delta$,
which therefore give fibers $I_2,I_1,I_1$. At infinity, put $s=t^{-1}$,
$X=s^2x$, and $Y=s^3y$. The local coefficients are
\[
 A_\infty=-3s^3(1+s),\qquad
 B_\infty=s^4(2+3s+2s^2).
\]
Their vanishing orders, together with that of the discriminant, are
$(3,4,8)$. The equation is minimal and the fiber is $IV^*$ by Tate's
algorithm. There are no other singular fibers. Finally,
$\chi(\cO_S)=1$, and the canonical bundle formula gives
$K_S\simeq\pi^*\cO_{\PP^1}(-1)$; hence this elliptic surface with
section is rational~\cite{Miranda,BHPV}.
\end{proof}

\begin{proof}[Proof of Lemma~\ref{init:height}]
The reducible fibers have root lattices $E_6$ and $A_1$.
Since $\rho(S)=10$, the Shioda--Tate formula gives
$\rk\MW(S)=10-2-6-1=1$. For every nonzero section $R$,
\[
 \langle R,R\rangle
 =2+2(R\cdot O)-\operatorname{contr}_\infty(R)
                    -\operatorname{contr}_0(R)
 \geq 2-\frac43-\frac12=\frac16.
\]
Here the local contributions belong to $\{0,4/3\}$ and $\{0,1/2\}$,
respectively~\cite{Shioda}. In particular, $\MW(S)$ is torsion-free.

Substitution verifies \eqref{init:explicit-section}. This section is
disjoint from $O$, including at infinity, where its coordinates are
$(X,Y)=(s^2,\sqrt2\,s^2)$. At $0$ and $\infty$ it passes through
the singular point of the Weierstrass fiber, so on the minimal regular
model it meets a nonidentity component. Indeed, the smooth locus of
the Weierstrass fiber is the identity component of the N\'eron model.
Thus its local contributions are $1/2$ and $4/3$, and
\[
 \langle P,P\rangle=2-\frac43-\frac12=\frac16.
\]
If $P=nR$ for a generator $R$, then
$1/6=n^2\langle R,R\rangle\geq n^2/6$, so $n=\pm1$.
\end{proof}

\begin{proof}[Proof of Lemma~\ref{init:linearization}]
Set $Q=6P$. Write $N=\mathcal{H}om(t_Q^*M,M)$. Translation acts trivially on
$\Pic^0$ of the generic fiber, so $N$ is represented by a vertical
divisor. Since $t_Q$ preserves every component, $N$ has multidegree
zero on each fiber. The fiber intersection matrices and the absence
of multiple fibers imply that $N\simeq\pi^*L$ for a line bundle
$L$ on $\PP^1$.

All local height corrections involving the narrow section ${6P}$ vanish.
Hence
\[
 6=\langle Q,Q\rangle=2+2(Q\cdot O),\qquad
 1=\langle P,Q\rangle=1+(Q\cdot O)-(P\cdot Q),
\]
so $Q\cdot O=P\cdot Q=2$. Restricting $N=M\otimes t_Q^*M^{-1}$
to $O$ and using $O^2=-1$ gives
\[
 \deg L=(P-O)\cdot O-(P-O)\cdot Q=1-(2-2)=1.
\]
Therefore $L\simeq\cO_{\PP^1}(1)$.
\end{proof}

\section{Proof of the topological classification}\label{app:topology}

We first recall the classification needed for
Theorem~\ref{top:theorem}. For a closed simply connected spin
six-manifold $N$ with homology \eqref{main:homology}, the even-degree
mod-two cohomology has one generator in each of degrees $0,2,4,6$.
Writing these generators as $1,a,d,z$, duality gives $ad=z$, and the
only undetermined product is
\[
 a^2=\varepsilon d,\qquad
 \varepsilon=\langle a^3,[N]_2\rangle\in\mathbb F_2.
\]
Moreover, $p_1(N)=2q_1(N)=0$, since the spin characteristic class
$q_1(N)$ lies in $H^4(N;\Z)=\Z/2$. Andrus's classification
\cite[Theorem~5.2]{Andrus}, with $n=2$, therefore determines the
oriented diffeomorphism type by $\varepsilon$: its invariants are
the even-degree $\Z/n$ cohomology ring and $p_1$, with an additional
Pontryagin cubing operation only when $3\mid n$.
See also~\cite[Theorems~1 and~7]{Zhubr}.

Both values occur. Attach a framed three-handle to $S^2\times D^5$
along an embedded sphere representing twice the generator of
$H_2(S^2\times S^4)$. Its boundary has homology
\eqref{main:homology} and is simply connected and spin. The
degree-two mod-two class extends over the handlebody, so its cube
evaluates to zero on the boundary.

For the other value, the complex normal bundle of $C_0\subset\PP^3$
is $\cO_{\PP^1}(4)\oplus\cO_{\PP^1}(2)$. Its underlying oriented
real bundle is trivial, since oriented rank-four bundles on $S^2$
are classified by $w_2$ and here $w_2=6\bmod2=0$. The normal framing
is unique up to homotopy because $\pi_2(SO(4))=0$. Surgery preserves
simple connectivity and the spin condition, and the surgery homology
sequence gives \eqref{main:homology} for $M_1$.
The mod-two hyperplane class extends over the surgery trace because
the conic has even degree. Its restriction to $M_1$ is nonzero, and
bordism invariance gives
\[
 \langle a_{M_1}^3,[M_1]_2\rangle
   =\langle\bar H^3,[\PP^3]_2\rangle=1.
\]
Thus the value one identifies precisely \eqref{top:model}.

For $X=X_2(\lambda)$, Theorem~\ref{main:theorem} gives
$H^2(X;\Z)=0$, hence $c_1(X)=0$ and $X$ is spin. It remains to
compute \eqref{top:cube}. We retain the marking
\eqref{mon:dual-basis}, the invariant class $\xi=uw+6\psi\delta$,
and the logarithmic period $\ell$ of \eqref{main:twist}.
Let $D\subset\CC$ be a closed disc containing the three finite
special values, and write $X_D=f^{-1}(D)$.

\begin{lemma}\label{top:fixed}
The scalar involution $\iota(z,v)=(z,-v)$ extends to $X$.
Its fixed set consists of an elliptic curve $E$ over $0$ and a
rational curve $C$ over $\alpha_-$. Moreover, $\pi_1(X_D)=\Z$.
\end{lemma}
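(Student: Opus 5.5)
The plan is to construct $\iota$ locally on each piece of the construction, check that the local pieces agree on the gluing annuli, and then read off the fixed set and $\pi_1(X_D)$ piece by piece.

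\emph{The open family.} On $M^\times_U$, the map $v\mapsto -v$ commutes with $F$, because $F$ is fiberwise linear in the line-bundle coordinate. Hence $\iota$ descends to $X_U$. In the semiabelian coordinates it is translation by the $2$-torsion point $-1\in\CC^*$ in the central $\CC^*$ direction. This translation has no fixed points on any smooth fiber, so the fixed set lies over the four special values.

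\emph{The rank-one fillings at $0$ and $\alpha_+$.} In the coordinates of Lemma~\ref{fill:tate}, $\iota$ is $(z,v)\mapsto(z,-v)$. After the monomial change $x=v$, $y=v^2/z$ used at $0$, it becomes $(x,y)\mapsto(-x,y)$. This map preserves the fan of Proposition~\ref{fill:rankone}, so it extends through the torus action. It commutes with $h$ and $g$, so it descends.
\begin{itemize}
\item At $0$: the vertex charts have coordinates of weights $(-1,+1)$, so the fixed locus lies in $\{x=0\}$ with $xy=t$ forcing $t=0$. This is the double curve $E$, an elliptic curve in the $y$-direction modulo $c(t)$. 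Fixedness of the double curve requires the action on the $y$-direction to be trivial, and it is.
\item At $\alpha_+$: here $(x,y)=(z,v)$ and $\iota$ acts on $y=v$ by $-1$. The $y$-direction is compactified to $E_t=\CC^*/c^\Z$, where $-1$ is a free translation. So there are no fixed points.
\end{itemize}

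\emph{The rank-two filling at $\alpha_-$.} The translation by $(1,-1)$ is an element of the torus $T=(\CC^*)^2$, and it acts on the toric $dP_6$ central fiber. On the $dP_6$ component, fixed points of a torus element of order two are its fixed components. On the boundary hexagon, the curve whose one-parameter stabilizer contains $(1,-1)$ is pointwise fixed; that is the curve associated to the ray $(1,0)$ in the triangulation. After identifying opposite sides, one expects this curve, with its opposite identified, to give a single $\PP^1$, which is $C$. Each vertex fixed point lies on such curves, and one must check that no isolated fixed points survive.

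\emph{The logarithmic transformation.} On $\mathscr A'$, the map $\iota$ is translation by a $2$-torsion section. It commutes with $\widetilde\rho$ because that point is $\rho$-invariant: it lies in the central $\CC^*$ direction, which $\rho$ preserves. So $\iota$ acts freely on the central fiber $B$ (order $2$ versus order $3$). One has to check compatibility with the normalized comparison $\widetilde f_r$ and the conjugacy \eqref{fill:log-conjugacy}, since scalar multiplication commutes with both. This gives the global extension.

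\emph{Fundamental group of $X_D$.} This is a van Kampen argument as in Proposition~\ref{pi:trivial}, but omitting the filling at infinity. The three Mumford fillings kill $m_2,m_+,m_-$ and the vanishing lattices. By Lemma~\ref{pi:coinvariants} this leaves $\Z\widehat\psi$ from the fiber. The meridian $m_*$ is still trivial, because the relation $m_-m_+m_2m_*=1$ persists: the loop around $\partial D$ is $m_*^{-1}$. So $\pi_1(X_D)$ is cyclic, generated by $[\widehat\psi]$. It is infinite because $H_1(X_D;\Q)$ contains the coinvariant class, via the Leray computation restricted over $D$. Equivalently, $\psi$ is a global invariant class that survives, since there is no transgression from infinity.

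\emph{Main obstacle.} I expect the hardest step to be the fixed-point analysis at $\alpha_-$. It requires pinning down exactly which boundary curves of $dP_6$ the element $(1,-1)$ fixes in the Mumford chart, given the valuation coordinates of Proposition~\ref{init:section}. It also requires showing that the quotient identifications produce exactly one rational curve and no isolated points.
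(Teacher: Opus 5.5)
Your overall route matches the paper's. You extend $\iota$ piece by piece and show it is free on smooth fibers, at $\alpha_+$, and at infinity (an odd-order quotient of a free action commuting with $\iota$). You find the double curve at $0$, and you get $\pi_1(X_D)=\Z$ from Lemma~\ref{pi:coinvariants} together with the invariant class $\psi\in H^0(D,R^1f_*\Z)=H^1(X_D;\Z)$. Those parts are fine. On smooth fibers you should add why $-1$ is a nonzero point of the quotient: $-v=F^mv$ is excluded by the contraction estimate.

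At $0$ your bookkeeping is muddled. The weights $(-1,+1)$ belong to the global coordinates $(v,v^2/z)$, not to the chart coordinates whose product is $t$. In the chart of $\sigma_k$ those are $xt^{-k}$ and $x^{-1}t^{k+1}$, and both have weight $-1$, while the elliptic coordinate has weight $+1$. That is why the fixed locus is exactly the double curve rather than a divisor. It also gives the normal weights $-1,-1$, which the paper records because they are used later.

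The genuine gap is at $\alpha_-$, and two of your assertions there are wrong.

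\emph{The fixed curves.} In the coordinates $(z,v)$, where the periods have valuations $(1,0)$ and $(0,1)$, the element $(1,-1)$ lies in the one-parameter subgroup $s\mapsto(1,s)$ of the cocharacter $(0,1)$. So the pointwise fixed boundary curves of $Y=dP_6$ are $D_{(0,1)}$ and $D_{(0,-1)}$, the curves of the vertical edges. On $D_{(1,0)}$ the involution acts by $-1$.

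\emph{Isolated fixed points.} An order-two torus element does not fix only curves: it fixes all six torus-fixed points of $Y$. The vertices of the cones $[(1,-1),(1,0)]$ and $[(-1,1),(-1,0)]$ lie on no vertical boundary curve, so they are isolated fixed points of $Y$. What rescues the statement is that every triangle of the periodic triangulation has exactly one vertical edge. Hence both triple points $p_\pm$ of $W$ lie on $C$.

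\emph{Smoothness of $C$.} You also need that the two triangles adjacent to a vertical edge lie in different $N$-orbits (the lower and upper triangles of the unit square). Otherwise the endpoints of $C$ would be glued and $C$ would be nodal.

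The paper avoids the normalization and computes in each chart $t=z_0z_1z_2$. Write $e_v=(0,1,0)\in N\oplus\Z$ and expand it in the rays $(a_i,1)$ of a triangle; the exponents give the weights. For $(0,0),(1,0),(0,1)$ one has $e_v=(a_2,1)-(a_0,1)$, so the weights are $(-1,+1,-1)$. For the upper triangle they are $(-1,-1,+1)$. In each chart the fixed locus is therefore the axis of its vertical edge. Nonzero periods move every stratum, so fixed points in the quotient come from fixed points upstairs, and these axes give one smooth $\PP^1$ modulo $N$.
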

\begin{proof}
The involution commutes with the periods, extends over the Mumford
charts, and commutes with the logarithmic quotient. On a smooth
fiber the contraction estimate excludes $-v=F^m v$ for $m\ne0$,
and $m=0$ is impossible. At $0$, the coordinates
$(x,y)=(v,v^2/z)$ give $(x,y)\mapsto(-x,y)$, fixing the double
elliptic curve. In its two normal directions the action is $-1$.
At $\alpha_+$ the action on the remaining elliptic factor is a
nonzero two-torsion translation, so there are no fixed points.

In the rank-two Mumford construction at $\alpha_-$, each triangle
has two odd normal weights and one zero tangential weight. The fixed
curves correspond to vertical edges of the periodic triangulation.
There is one such edge modulo the period lattice, with endpoints
at the two distinct triple points; it gives one smooth $\PP^1$.
At infinity the involution acts freely on the good-reduction torus,
and the commuting quotient of odd order three creates no fixed points.

Lemma~\ref{pi:coinvariants} shows that $\pi_1(X_D)$ is a quotient
of $\Z[\widehat\psi]$. By Propositions~\ref{fill:rankone}
and~\ref{fill:a2} and the degree-one Leray sequence over the disc $D$,
the invariant class $\psi$ extends to $X_D$. It evaluates as one
on $\widehat\psi$, so there are no further relations.
\end{proof}

Put $G=\langle\iota\rangle\simeq\Z/2$, and let $a$ denote the
nonzero class in $H^2(X;\mathbb F_2)$.

\begin{lemma}\label{top:line}
There is a $G$-linearized complex line bundle $\mathcal L$ on $X_D$
such that
\[
 \rho_2c_1(\mathcal L)=a|_{X_D},\qquad
 \langle c_1(\mathcal L),[C]\rangle\equiv1\pmod2.
\]
The stabilizer acts trivially on $\mathcal L|_E$ and nontrivially
on $\mathcal L|_C$.
\end{lemma}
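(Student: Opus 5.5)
We construct $\mathcal L$ from the linear fiber coordinate of $M$ rather than abstractly. Over $X_U\cap X_D$ the tautological data of $M^\times_U$ give a line bundle: pull $M$ back to $M^\times_U$, note that it has a tautological nowhere-zero section $v\mapsto v$, and descend. Descent needs the generator $F$ to act on the pulled-back bundle, which comes from $\Phi:t_{6P}^*M\to M$. Instead of this trivial descent, we twist by a character. Let $\mathcal L$ be the descent of the trivial bundle $M^\times_U\times\CC$ with $F$ acting by $-1$ on the $\CC$ factor. Then $\mathcal L$ is a flat line bundle with monodromy $\pm1$, so $\mathcal L^{\otimes2}$ is trivial and $c_1(\mathcal L)$ is $2$-torsion. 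Let $\iota$ act trivially on the $\CC$ factor; this gives the $G$-linearization.

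First, extend $\mathcal L$ over the Mumford charts at $0,\alpha_+,\alpha_-$. The character defining $\mathcal L$ corresponds to the homomorphism $\pi_1(X_D)=\Z[\widehat\psi]\to\{\pm1\}$ (Lemma~\ref{top:fixed}) sending $\widehat\psi$ to $-1$. Since $\psi$ is the coordinate dual to the $F$-period $a_3$, this is well defined on all of $X_D$. Thus $\mathcal L$ is the flat bundle attached to the unique nontrivial character of $\pi_1(X_D)$, and it exists automatically on $X_D$.

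Next, identify $\rho_2c_1(\mathcal L)$ with $a|_{X_D}$. For the flat bundle of a $\pm1$ character $\chi$, $c_1$ is the Bockstein $\beta(\chi)\in H^2(X_D;\Z)[2]$, and its mod-two reduction is $Sq^1\chi$. On $X$ itself, $H^1(X;\mathbb F_2)=0$ and $H^2(X;\mathbb F_2)=\mathbb F_2$, with the generator $a$ reducing from the torsion class of $H^3$ only through duality. The claim should therefore be compared through the Leray page: the class $a$ restricts to $X_D$ as the image of the degree-one generator $\psi$ under the transgression computed in Proposition~\ref{hom:differentials}, since $d_2(3\psi)=\omega$ kills integrally but $\psi$ mod $2$ survives. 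This part is delicate. My plan is to show that $a|_{X_D}$ is the unique class of $H^2(X_D;\mathbb F_2)$ pulling back nontrivially in the relevant Leray filtration piece, namely $\psi\smile\psi$-type Bockstein data, and to match $Sq^1$ of the character with it.

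Then compute the parity on $C$ and the isotropy actions. On $E$, over $0$, the fixed curve is the double curve $x=0$ in the coordinates $(x,y)=(v,v^2/z)$. Since the $F$-twist is independent of $v$ and $\iota$ acts trivially on the $\CC$ factor, the stabilizer acts trivially on $\mathcal L|_E$. On $C$, over $\alpha_-$, I would instead twist the linearization: act by $-1$ on the factor in the toric chart where the fixed edge lies, reflecting that the edge connects the two triple points where $v$ and the $F$-coordinate exchange roles. The degree $\langle c_1(\mathcal L),[C]\rangle$ is computed in the $dP_6$ chart. The edge curve is a boundary $\PP^1$ whose two ends are glued by the period lattice with a sign change from $\chi$, so the descended bundle on $C$ has odd degree. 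I expect this local toric bookkeeping at $\alpha_-$ (the parity on $C$ together with the nontrivial isotropy) to be the main obstacle, along with justifying $\rho_2c_1=a$. I would try first the explicit phase-lattice chart of Proposition~\ref{fill:a2}, tracking $\widehat\psi$ as the period $(0,1)$.
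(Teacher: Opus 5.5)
\textbf{There is a genuine gap, at the first step.} The flat bundle you build from a $\pm1$ character of $\pi_1(X_D)=\Z$ is topologically trivial, so it cannot satisfy either numerical condition.

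\emph{Why $c_1(\mathcal L)=0$.} By your own formula $c_1(\mathcal L)=\beta(\chi)$. But $\chi$ is the mod-two reduction of the integral class $\psi\in H^1(X_D;\Z)$; this is exactly what makes $\widehat\psi\mapsto-1$ well defined. Hence $\beta(\chi)=0$. Two other ways to see the same thing:
\begin{itemize}
\item $H_1(X_D;\Z)=\Z$ is free, so $H^2(X_D;\Z)$ is torsion-free and every flat bundle has $c_1=0$.
\item $\chi$ factors through $\Z\to U(1)$ and deforms to the trivial character.
\end{itemize}
Consequently $\rho_2c_1(\mathcal L)=0$. Also $\mathcal L|_C$ is a flat bundle on $\PP^1$, so its degree is $0$.

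\emph{Why this contradicts the statement.} We have $a|_{X_D}\neq0$. In the mod-two Leray sequence, $a$ is represented by $\bar\xi\in E_2^{0,2}$, which survives because $d_2\xi=2\omega[\delta]$ is even. Its restriction to a smooth fiber is $\bar\xi=uw\neq0$, whereas any flat bundle restricts to a topologically trivial bundle on the torus fiber. Your description of $a$ through $\psi$ is also wrong: $\bar\psi$ is killed by $d_2\bar\psi=\bar\omega$, as it must be since $H^1(X;\mathbb F_2)=0$.

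\emph{The isotropy claims fail too.} With $\iota$ acting trivially on the $\CC$ factor, the stabilizer acts trivially along both $E$ and $C$. Changing the sign only in the chart containing the edge does not define a $G$-linearization of a bundle on the connected space $X_D$. In any case, no choice of linearization changes $c_1$ or the degree on $C$.

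\textbf{The missing idea.} $\mathcal L$ must be non-flat on the fibers, with fiberwise Chern class the Appell--Humbert class $\xi=u\wedge w+6\psi\wedge\delta$. The paper proceeds as follows.
\begin{itemize}
\item It uses theta-type factors of automorphy on $M^\times$: $h\colon\zeta\mapsto-z^{-1}\zeta$ and $F\colon\zeta\mapsto p^{-3}R^3v^6\zeta$. The scalar $p^{-3}R^3$ is forced by symmetry under the inversion $J$.
\item Because $v^6$ is even in $v$, the map $(z,v,\zeta)\mapsto(z,-v,\zeta)$ commutes with these factors. This gives the $G$-linearization.
\item The factors patch under the four monodromies, which preserve the mod-two quadratic refinement $u+w+uw$.
\item The bundle extends over the Mumford charts via integral support functions: $-k(k-1)$ at $0$, and $\varphi(m,n)=-m(m-1)/2+3n^2$ at $\alpha_-$.
\item Slope parities then give the trivial character along $E$, the nontrivial character along $C$, and odd degree on $C$.
\item For $\rho_2c_1(\mathcal L)=a|_{X_D}$, the mod-two class is extended over the logarithmic transformation. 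The clutching changes it by $\iota_\ell\xi=-2(-2u+w-3\delta)$, which is even. Transfer for the cubic cover and Mayer--Vietoris then give a global class restricting to $uw\neq0$ on a fiber, hence equal to $a$.
\end{itemize}
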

\begin{proof}
Rigidify by the distinguished section and put $R=\lambda\beta$.
The periods $h,F$ lift to the auxiliary line, with coordinate $\zeta$,
by the symmetric factors
\begin{equation}\label{top:line-factors}
 \begin{split}
 h(z,v,\zeta)&=(qz,pv,-z^{-1}\zeta),\\
 F(z,v,\zeta)&=(p^{-6}z,Rv,p^{-3}R^3v^6\zeta).
 \end{split}
\end{equation}
They commute and give first Chern class $\xi$ on each smooth fiber.
The scalar in the second line is fixed by symmetry: for the inversion
lift
\[
 J(z,v,\zeta)=(z^{-1},v^{-1},-z^{-1}\zeta),
\]
a factor $Bv^6$ satisfies $JFJ=F^{-1}$ precisely when
$B^2=p^{-6}R^6$. We choose $B=p^{-3}R^3$.

These symmetric line factors~\cite{BL} patch under the elliptic
markings. Indeed, their quadratic refinement modulo two is
\[
 Q(\gamma)=u(\gamma)+w(\gamma)+u(\gamma)w(\gamma)
             \pmod2,
\]
the odd elliptic theta characteristic, zero on the radical spanned
by $a_0,a_3$. All four monodromies \eqref{mon:matrices} preserve it.
Rigidification removes the scalar ambiguity; the remaining sign
choices are degree-zero two-torsion factors. Since $v^6$ is even in
$v$, the lift $(z,v,\zeta)\mapsto(z,-v,\zeta)$ commutes with
\eqref{top:line-factors}.

At $0$, the orders of $(q,p,R)$ are $(2,1,-3)$. Replacing $F$ by
$g=Fh^3$, as in Lemma~\ref{fill:tate}, changes its line factor to a
unit times $z^{-3}v^6=y^3$, while the factor for $h$ is
$-z^{-1}=-x^{-2}y$. An integral support function on the rank-one
fan is $-k(k-1)$, up to a linear term in the nondegenerating
direction. Its successive slopes are $-2k$, so the local frames
have even $x$-weight. Thus the character along $E$ is trivial.

At $\alpha_-$, the orders are $(1,0,1)$ and the monomial line
factors are $z^{-1}$ and $t^3v^6$. On the periodic triangulation
an integral support function is
\begin{equation}\label{top:support}
 \varphi(m,n)=-\frac{m(m-1)}2+3n^2.
\end{equation}
Its period differences are $-m$ and $6n+3$. The $v$-slope on
either triangle of a unit square is odd, giving the nontrivial
character along $C$. Successive $z$-slopes differ by one, so the
degree on $C$ is odd. Moving the distinguished section from its
valuation $(0,1)$ to the zero vertex changes the $v$-slopes by
multiples of six and preserves both parities. At $\alpha_+$ the
same factors extend over the rank-one fan.

To identify the mod-two class globally, use the origin-preserving
comparison \eqref{fill:normalized-comparison} on the cubic cover
at infinity. The symmetric line extends over the good-reduction
disc. Two symmetric rigidified lines with the same alternating
form differ by a two-torsion degree-zero line; its integral Chern
class on the boundary $T^4\times S^1$ is zero. Taking the tensor
norm for the degree-three cover gives descent under the untwisted
deck action without changing the mod-two class.

The logarithmic clutching changes this class on the boundary by
contraction with $\ell$, as in Lemma~\ref{hom:clutch}. Here
\begin{equation}\label{top:even-clutch}
 \iota_\ell\xi=4u-2w+6\delta=-2(-2u+w-3\delta),
\end{equation}
which is even in the full local lattice. Hence the mod-two classes
agree on the boundary after the logarithmic transformation.
Transfer for the degree-three cover shows that they agree downstairs
as well. Mayer--Vietoris gives a global class restricting to
$\rho_2c_1(\mathcal L)$ on $X_D$. Its restriction to a smooth fiber
is $\bar\xi=uw\ne0$, so it is $a$.
\end{proof}

\begin{proof}[Proof of Theorem~\ref{top:theorem}]
We compute the cube by mod-two equivariant localization. Write
$\tau\in H^1(BG;\mathbb F_2)$ for the generator. The class $a$
has a lift $\widetilde a\in H_G^2(X;\mathbb F_2)$: in the Borel
spectral sequence, $H^1(X;\mathbb F_2)=0$, and the remaining
possible obstruction lands in $H^3(BG;\mathbb F_2)$, where it
vanishes because a fixed point gives a section over $BG$.

We compare this lift on $X_D$ with the equivariant reduction of
$c_1(\mathcal L)$. Their difference vanishes in ordinary cohomology.
By Lemma~\ref{top:fixed} and the scalar circle isotopy,
\[
 \pi_1((X_D)_G)=\Z\times\Z/2.
\]
The low-degree sequence for the simply connected universal cover
shows that the difference comes from
$H^2(B(\Z\times\Z/2);\mathbb F_2)$. Every fixed point induces the
same homomorphism $\Z/2\to\Z\times\Z/2$, since the latter group
has a unique element of order two. The difference therefore has
one common coefficient of $\tau^2$ on the fixed set. Subtracting
that multiple of $\tau^2$ from $\widetilde a$, Lemma~\ref{top:line}
gives
\[
 \widetilde a|_C=a|_C+\tau^2,\qquad
 \widetilde a|_E=a|_E+\tau\eta,
 \quad\eta\in H^1(E;\mathbb F_2).
\]

For either fixed curve $F=C,E$, the complex normal bundle has
$w_2(N_F)=\rho_2(c_1(X)|_F-c_1(TF))=0$, and $\iota$ is $-1$ in
both normal directions. Thus its equivariant Euler class is
$e_G(N_F)=\tau^4$. Localization gives
\[
 \langle a^3,[X]_2\rangle
   =\sum_{F=C,E}\int_F\frac{(\widetilde a|_F)^3}{\tau^4}.
\]
This follows from the equivariant Thom isomorphism after inverting
$\tau$, since the action is free away from the fixed curves.
On the oriented surface $E$, the products
$(a|_E)^2$, $a|_E\eta$, and $\eta^2$ vanish, so its contribution
is zero. The rational curve contributes
\[
 \int_C\frac{(a|_C+\tau^2)^3}{\tau^4}
    =\int_C a|_C=1
\]
by Lemma~\ref{top:line}. This proves \eqref{top:cube}, and the
classification above identifies $X$ with $M_1$.
\end{proof}


\begin{thebibliography}{99}

\bibitem{AlbaneseMilivojevic}
M.~Albanese and A.~Milivojevi\'c,
On the minimal sum of Betti numbers of an almost complex manifold,
\emph{Differential Geom. Appl.} \textbf{62} (2019), 101--108.
\href{https://doi.org/10.1016/j.difgeo.2018.10.002}{doi:10.1016/j.difgeo.2018.10.002}.

\bibitem{Andrus}
D.~G.~Andrus, \emph{Classification of Certain 6-Manifolds},
M.Sc. thesis, McMaster University, 1977.

\bibitem{BHPV}
W.~Barth, K.~Hulek, C.~Peters, and A.~Van de Ven,
\emph{Compact Complex Surfaces}, second ed., Springer, Berlin, 2004.

\bibitem{BM}
E.~Bierstone and P.~D.~Milman,
Canonical desingularization in characteristic zero by blowing up the
maximum strata of a local invariant,
\emph{Invent. Math.} \textbf{128} (1997), 207--302.

\bibitem{BL}
C.~Birkenhake and H.~Lange,
\emph{Complex Abelian Varieties}, second ed., Springer, Berlin, 2004.

\bibitem{BorelSerre}
A.~Borel and J.-P.~Serre,
Groupes de Lie et puissances r\'eduites de Steenrod,
\emph{Amer. J. Math.} \textbf{75} (1953), no.~3, 409--448.
\href{https://doi.org/10.2307/2372495}{doi:10.2307/2372495}.

\bibitem{AlpogeClaude}
Claude and L.~Alp\"oge,
\emph{A compact complex threefold fibred by tori over the projective line,
and the six-sphere}, manuscript, 2026.
\url{https://alpo.ge/s6.pdf} (accessed 10 September 2026).

\bibitem{Clemens}
C.~H.~Clemens, Degeneration of K\"ahler manifolds,
\emph{Duke Math. J.} \textbf{44} (1977), 215--290.

\bibitem{Engel}
P.~Engel, \emph{Complex structures on $S^6$}, manuscript, 2026.
\url{https://philip-engel.github.io/S6.pdf}
(accessed 10 September 2026).

\bibitem{Miranda}
R.~Miranda, \emph{The Basic Theory of Elliptic Surfaces},
ETS Editrice, Pisa, 1989.

\bibitem{Mumford}
D.~Mumford, An analytic construction of degenerating abelian varieties
over complete rings,
\emph{Compositio Math.} \textbf{24} (1972), 239--272.

\bibitem{Shioda}
T.~Shioda, On the Mordell--Weil lattices,
\emph{Comment. Math. Univ. St. Pauli} \textbf{39} (1990), 211--240.

\bibitem{Zhubr}
A.~V.~Zhubr, Classification of simply connected six-dimensional spin
manifolds, \emph{J. Soviet Math.} \textbf{10} (1978), no.~3, 451--453.
\href{https://doi.org/10.1007/BF01476852}{doi:10.1007/BF01476852}.
Translated from \emph{Zap. Nauchn. Sem. LOMI} \textbf{45} (1974), 71--74.
\end{thebibliography}
\end{document}